\documentclass[11pt]{article}

\usepackage[margin=1in,a4paper]{geometry}
\usepackage[T1]{fontenc}
\usepackage{lmodern}
\usepackage{amssymb,amsmath,amsthm,mathtools}
\usepackage{microtype}
\usepackage{graphicx}
\usepackage{placeins}
\usepackage{xcolor}
\definecolor{paperlink}{RGB}{0,0,114}
\usepackage[unicode,pagebackref,hypertexnames=false,breaklinks]{hyperref}
\hypersetup{colorlinks=true,linktoc=all,
  linkcolor=paperlink,citecolor=paperlink,urlcolor=paperlink,pdfstartview=Fit,
  pdftitle={Intermittency for the parabolic Anderson model on spaces with sub-Gaussian heat kernel estimates},
  pdfauthor={Fabrice Baudoin, Hongyi Chen, Li Chen, S{\o}ren Kj{\ae}rsgaard}}
\usepackage[shortlabels]{enumitem}
\setlist[enumerate,1]{wide, labelindent=0pt,label={\upshape(\roman*)}}
\setlist[itemize]{leftmargin=2em,itemsep=0.25em,topsep=0.35em}
\usepackage{fancyhdr}
\numberwithin{equation}{section}
\mathtoolsset{showonlyrefs}
\hbadness=\maxdimen

   \newtheorem{mainthm}{Theorem}
   
\newcommand{\R}{\mathbb{R}}

\newcommand{\EE}{\mathbb{E}}
\newcommand{\PP}{\mathbb{P}}
\newcommand{\eps}{\varepsilon}
\newcommand{\Beta}{\mathrm{B}}
\newcommand{\norm}[1]{{\left\lVert{#1}\right\rVert}}

\newcommand{\cE}{\mathcal{E}}
\newcommand{\cF}{\mathcal{F}}
\newcommand{\cH}{\mathcal{H}}
\newcommand{\cW}{\mathcal{W}}

\newcommand{\cD}{\mathcal{D}}

\newcommand{\1}{\mathbf{1}}
\newcommand{\Lap}{\Delta}

\theoremstyle{plain}
\newtheorem{thm}{Theorem}[section]
\newtheorem{lem}[thm]{Lemma}
\newtheorem{cor}[thm]{Corollary}
\newtheorem{prop}[thm]{Proposition}
\newtheorem{assump}[thm]{Assumption}
\theoremstyle{definition}
\newtheorem{defn}[thm]{Definition}
\newtheorem{notation}[thm]{Notation}
\theoremstyle{remark}
\newtheorem{remark}[thm]{Remark}
\newtheorem{example}[thm]{Example}

\begin{document}

\title{Intermittency for the parabolic Anderson model \\ on  spaces with sub-Gaussian heat kernel estimates}
\author{%
Fabrice Baudoin\thanks{E-mail: \href{mailto:fbaudoin@math.au.dk}{\nolinkurl{fbaudoin@math.au.dk}}},
Hongyi Chen\thanks{E-mail: \href{mailto:hchen77@math.au.dk}{\nolinkurl{hchen77@math.au.dk}}},
Li Chen\thanks{E-mail: \href{mailto:lchen@math.au.dk}{\nolinkurl{lchen@math.au.dk}}},
S\o ren Kj\ae rsgaard\thanks{E-mail: \href{mailto:skj@math.au.dk}{\nolinkurl{skj@math.au.dk}}}\\[0.4em]
{\small Department of Mathematics, Aarhus University}\\
{\small Ny Munkegade 118, 8000 Aarhus C, Denmark}}
\date{}
\maketitle

\begin{abstract}

We study the parabolic Anderson model $\partial_t u=\Delta u+\beta u\,\dot W_\alpha$ on an unbounded Ahlfors $d_h$-regular Dirichlet metric measure space whose heat kernel satisfies two-sided sub-Gaussian estimates with walk dimension $d_w$. The noise is white in time and, in space, either white ($\alpha=0$) or colored  through the Riesz kernel of $(-\Delta)^{-2\alpha}$, where $0\le 2\alpha<d_h/d_w<1+2\alpha$. No recurrence is assumed. We prove existence, uniqueness and H\"older regularity of the mild Skorohod solution for bounded initial conditions. For nonzero nonnegative bounded initial conditions, the solution is positive and its moment Lyapunov exponents exist for every real $p\ge1$, independently of the spatial point and the initial condition. For $p\ge2$, these exponents have order $p\,(\beta^2(p-1))^{d_w/((1+2\alpha)d_w-d_h)}$, with upper and lower bounds matching up to constants.  
\end{abstract}

\tableofcontents
\clearpage
\section{Introduction}\label{sec:intro}

The parabolic Anderson model (PAM) is the linear stochastic heat equation with multiplicative Gaussian noise
\begin{equation}\label{eq:pam-intro}
\partial_{t}u(t,x)=\Lap u(t,x)+\beta\,u(t,x)\,\dot W_\alpha(t,x),\qquad u(0,\cdot)=u_0\,,
\end{equation}
where $\beta>0$ is the coupling constant, physically an inverse temperature. A central feature of the model is intermittency: the solution develops in time increasingly rare and higher peaks. When the moment Lyapunov exponents $\lambda_p=\lim_{t\to\infty}t^{-1}\log\EE[u(t,x)^p]$ exist, the intermittency property is expressed by the strict increase of $p\mapsto\lambda_p/p$~\cite{CarmonaMolchanov1994,Khoshnevisan2014}. The dependence of the moment Lyapunov exponents on $\beta$ and $p$ quantifies the growth of these peaks. We study~\eqref{eq:pam-intro} on unbounded metric measure spaces equipped with a diffusion satisfying two-sided sub-Gaussian heat kernel estimates. The noise is white in time and either white or Riesz kernel colored in space. Our main results establish well-posedness and H\"older regularity, the existence of moment Lyapunov exponents for every real $p\ge1$, and two-sided bounds with matching dependence on $\beta$ and $p$ for $p\ge2$.

\vspace{0.4cm}

On $\R^d$, the model and its intermittency properties have been studied extensively since the seminal celebrated work of Carmona and Molchanov~\cite{CarmonaMolchanov1994}. For noise that is white in time and homogeneous in space, the stochastic integration theory of Walsh and Dalang~\cite{Walsh1986,Dalang1999} provides the natural framework, with admissible spatial covariances characterized by Dalang's condition. Well-posedness, Feynman--Kac representations, moment estimates and regularity for Gaussian multiplicative noise are developed in~\cite{HuNualart2009,HuNualartSong2011,HHNT15,BalanChen2018,ChenHuang2019}. Intermittency is studied in~\cite{FoondunKhoshnevisan2009,ChenDalang2015,Khoshnevisan2014}, and precise long-time moment asymptotics are obtained in~\cite{ChenHuSongXing2015,HuangLeNualart2017,Ch15,Ch17}. Spatial homogeneity plays an important role in this theory: Fourier analysis expresses Dalang's condition in terms of the spectral measure of the covariance and provides estimates for the Wiener chaos kernels, while Feynman--Kac formulas relate integer moments to interacting particle systems.

Beyond Euclidean space, the model has more recently been studied on Heisenberg groups~\cite{BOTW23,Heisenberg_Stratonovich}, compact Riemannian manifolds~\cite{ChenOuyangVickery2025,CO25,CNO26}, and hyperbolic spaces~\cite{BCO24,Geng-Ouyang}; see also~\cite{GengXu2025,GWX_quenched} for time-independent noise. Closest to the present work are the results on compact p.c.f.\ fractals~\cite{HamblyYang2018,HamblyYang2026} and bounded domains of recurrent metric measure spaces~\cite{BCHOTW24}. A unified account of the Euclidean and geometric theories is given in~\cite{TindelAarhus2026}. In these settings, heat kernel theory provide an alternative to Fourier analysis. Here we develop this approach on unbounded spaces under two-sided sub-Gaussian bounds, without assuming recurrence or spatial homogeneity. The framework includes Euclidean spaces, Carnot groups, complete Riemannian manifolds with nonnegative Ricci curvature and maximal volume growth, and unbounded fractals such as the infinite Sierpi\'nski gasket, nested fractals and Sierpi\'nski carpets.

\paragraph{Setting.}
Let $(X,d,\mu)$ be a locally compact, complete, unbounded, geodesic metric measure space, and let $(\cE,\cF)$ be a strongly local regular Dirichlet form on $L^2(X,\mu)$ with generator $\Lap$ and semigroup $P_t=e^{t\Lap}$. Assume that $\mu$ is Ahlfors $d_h$-regular, so that $\mu(B(x,r))\simeq r^{d_h}$ for all $x\in X$ and $r>0$. We also assume that $P_t$  admits a jointly continuous heat kernel $p_t(x,y)$ satisfying two-sided sub-Gaussian bounds
\begin{equation}\label{eq:subGauss-intro}
p_t(x,y)\simeq t^{-\frac{d_h}{d_w}}\exp\biggl(-c\Bigl(\frac{d(x,y)^{d_w}}{t}\Bigr)^{\frac1{d_w-1}}\biggr),\qquad t>0,\ x,y\in X\,,
\end{equation}
for some $d_w\ge2$. The Hunt process $(B_t)_{t\ge0}$ associated with $(\cE,\cF)$ is a diffusion, which we call Brownian motion on $X$. We assume that it is defined from every starting point. The precise hypotheses appear in Assumption~\ref{hyp:dirichlet-form}. We refer to~\cite{GrigoryanTelcs2012,GrigoryanHuLau2014} for a general account on sub-Gaussian heat kernel estimates on metric measure spaces and to~\cite{Barlow1998,Kigami2001,BarlowBass1999} for the fractal examples which provide a motivation for this work.

The exponents $d_h$ and $d_w$ describe volume growth and diffusion scaling, respectively. Ahlfors regularity identifies $d_h$ with the Hausdorff dimension, while the walk dimension $d_w$ means that Brownian motion travels a distance of order $r$ in a time of order $r^{d_w}$. Since $p_t(x,x)\simeq t^{-d_h/d_w}$, the diffusion is transient precisely when $d_h>d_w$. Our assumptions allow both the strongly recurrent regime $d_h<d_w$, treated for bounded domains in~\cite{BCHOTW24}, and the critical and transient regimes $d_h\ge d_w$.

\paragraph{The noise.}
For $\alpha>0$, the spatial covariance  we chose for the driving noise is the Riesz kernel of $(-\Lap)^{-2\alpha}$:
\begin{equation}\label{eq:Riesz-intro}
G_{2\alpha}(x,y)=\frac1{\Gamma(2\alpha)}\int_0^\infty t^{2\alpha-1}p_t(x,y)\,dt\,.
\end{equation}
By~\eqref{eq:subGauss-intro}, the integral is finite off the diagonal precisely when $2\alpha<d_h/d_w$. In that range, $G_{2\alpha}(x,y)\simeq d(x,y)^{2\alpha d_w-d_h}$; see Lemma~\ref{lem:fractional-laplacian-kernel-existence-and-bound}. On $\R^d$, this is a constant multiple of the Riesz kernel $|x-y|^{-(d-4\alpha)}$. We take $W_\alpha$ to be the centered Gaussian noise with covariance
\begin{equation*}
\EE\bigl[W_\alpha(\varphi)W_\alpha(\psi)\bigr]=\int_0^\infty\!\!\int_X\!\int_X\varphi(t,x)\psi(t,y)\,G_{2\alpha}(x,y)\,d\mu(x)\,d\mu(y)\,dt\,.
\end{equation*}
Formally, $\dot W_\alpha=(-\Lap)^{-\alpha}\dot W_0$: the noise is white in time and  smoothed in space by a fractional power of the Laplacian. These types of fields are introduced and studied on fractals in~\cite{BaudoinLacaux2022,BaudoinChen2023}. For $\alpha=0$, the noise is white in both time and space, with spatial covariance given by the $L^2(X,\mu)$ inner product. 

\paragraph{The admissible range of $\alpha$.}
Throughout the paper,
\begin{equation}\label{eq:alpha-range-intro}
0\le\alpha<\frac{d_h}{2d_w},\qquad \alpha>\frac{d_h}{2d_w}-\frac12\,.
\end{equation}
For $\alpha>0$, the upper bound ensures that $G_{2\alpha}$ is finite off the diagonal. The lower bound is the analogue of Dalang's condition: see Lemmas~\ref{lem:sup-kernel-estimates} and~\ref{lem:sobolev-IP}. As we will see, the quantity $(1+2\alpha)d_w-d_h\in(0,d_w)$ governs both moment growth and H\"older regularity of the solution. Space--time white noise, corresponding to $\alpha=0$, is admissible exactly in the strongly recurrent case $d_h<d_w$; in Euclidean space, this restricts the dimension to $d=1$.

\paragraph{Solutions.}
For $u_0\in L^\infty(X,\mu)$, we interpret~\eqref{eq:pam-intro} in the mild It\^o--Skorohod sense. Thus a solution is an adapted random field satisfying, for every $(t,x)\in\R_+\times X$,
\begin{equation}\label{eq:mild-intro}
u(t,x)=P_tu_0(x)+\beta\int_0^t\!\!\int_Xp_{t-s}(x,y)\,u(s,y)\,W_\alpha(ds,dy)\qquad\PP\text{-a.s.}\,,
\end{equation}
where the stochastic integral is understood in the Walsh--Dalang sense; see Definition~\ref{def:mild-PAM-solution}.

\paragraph{Main results.} We first state the well-posedness and regularity results.
\begin{mainthm}\label{thm:main-existence}
Suppose that Assumption~\ref{hyp:dirichlet-form} holds, $\alpha$ satisfies~\eqref{eq:alpha-range-intro}, $\beta>0$, and $u_0\in L^\infty(X,\mu)$. Then~\eqref{eq:pam-intro} has a unique mild solution $u$ in the sense of Definition~\ref{def:mild-PAM-solution}. It is given by its Wiener chaos expansion, and for every real $p\ge2$ and every $T>0$,
\begin{equation*}
\sup_{(t,x)\in(0,T]\times X}\EE\bigl[|u(t,x)|^p\bigr]<\infty\,.
\end{equation*}
If $u_0\ge0$ $\mu$-a.e., then $u(t,x)\ge0$ a.s.\ for every $(t,x)\in\R_+\times X$. If, in addition, $u_0\ne0$ in $L^\infty(X,\mu)$, then $u(t,x)>0$ a.s.\ for every such $(t,x)$.
\end{mainthm}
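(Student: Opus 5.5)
We construct the solution through its Wiener chaos expansion and prove the moment bounds by hypercontractivity. The $n$-th chaos kernel is
\begin{equation*}
f_n(s_1,y_1,\dots,s_n,y_n;t,x)=\frac{\beta^n}{n!}\,p_{t-s_{\sigma(n)}}(x,y_{\sigma(n)})\cdots p_{s_{\sigma(2)}-s_{\sigma(1)}}(y_{\sigma(2)},y_{\sigma(1)})\,P_{s_{\sigma(1)}}u_0(y_{\sigma(1)}),
\end{equation*}
where $\sigma$ orders the times increasingly. Since $|P_su_0|\le\norm{u_0}_\infty$, we have $n!\,\norm{f_n}_{\cH^{\otimes n}}^2\le \beta^{2n}\norm{u_0}_\infty^2\,J_n(t,x)$. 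Here $J_n$ is an iterated integral over the simplex $0<s_1<\dots<s_n<t$ of products of
\begin{equation*}
\int\!\!\int p_{r}(z,y)p_{r}(z,y')G_{2\alpha}(y,y')\,d\mu\,d\mu .
\end{equation*}

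The key step is to estimate this inner quantity uniformly in the base point. For $\alpha>0$, the semigroup property and the definition of $G_{2\alpha}$ give
\begin{equation*}
\int_0^\infty \tfrac{\tau^{2\alpha-1}}{\Gamma(2\alpha)}\,p_{2r+\tau}(z,z)\,d\tau\lesssim r^{2\alpha-d_h/d_w},
\end{equation*}
using the on-diagonal bound $p_t(z,z)\simeq t^{-d_h/d_w}$ and $2\alpha<d_h/d_w$. For $\alpha=0$, the same computation gives directly $p_{2r}(z,z)\lesssim r^{-d_h/d_w}$. I expect this to be exactly the content of Lemma~\ref{lem:sup-kernel-estimates}. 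The difficulty is that the chaos kernels are not simply tensor products. The $\cH^{\otimes n}$ norm pairs each spatial variable in two copies through $G_{2\alpha}$, and the intermediate variables are integrated. I would handle this by integrating out the variables from the innermost time outward, using the Chapman--Kolmogorov identity together with the positivity of $G_{2\alpha}$. This gives, for $\gamma:=1+2\alpha-d_h/d_w\in(0,1)$ (the Dalang-type condition),
\begin{equation*}
J_n(t,x)\le C^n\int_{0<s_1<\dots<s_n<t}\prod_{i}(s_{i+1}-s_i)^{\gamma-1}\,ds=\frac{C^n\Gamma(\gamma)^n t^{n\gamma}}{\Gamma(n\gamma+1)}.
\end{equation*}
This reduction is the main obstacle. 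If the direct peeling fails because of the cross terms, I would fall back on the Sobolev-type inequality of Lemma~\ref{lem:sobolev-IP}, which controls $\norm{\cdot}_{\cH}$ by an $L^q$ norm, and then use the heat-kernel bounds.

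The series $\sum_n \norm{I_n(f_n)}_{L^p}\le\sum_n (p-1)^{n/2}\sqrt{n!}\,\norm{f_n}$ then converges uniformly on $(0,T]\times X$, because the Mittag-Leffler-type sum $\sum_n (C(p-1)\beta^2)^{n/2}t^{n\gamma/2}\Gamma(n\gamma+1)^{-1/2}$ is finite. This gives the $L^p$ bound. The sum satisfies the mild equation by the standard recursion $f_n(\cdot;t,x)=\beta\,p_{t-\cdot}(x,\cdot)f_{n-1}(\cdot;s,\cdot)$ together with the fact that the Skorohod integral of an adapted integrand agrees with the Walsh--Dalang integral. For uniqueness, any solution in $L^2$ has chaos coefficients that must satisfy the same recursion, by the Stroock formula and the duality of the Skorohod integral. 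Alternatively, the difference of two solutions satisfies $\sup_x\EE|v|^2(t)\le C\int_0^t (t-s)^{\gamma-1}\sup_x\EE|v|^2(s)\,ds$, and a fractional Gronwall argument forces it to vanish.

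For nonnegativity and positivity, I would approximate the noise by spatially mollified, time-smoothed noises $W^\eps$, for example $P_\eps$-smoothed in space and averaged in time. Feynman--Kac then gives explicit positive solutions
\begin{equation*}
u^\eps(t,x)=\EE_x\Bigl[u_0(B_t)\exp\Bigl(\beta\int_0^t\dot W^\eps(t-s,B_s)\,ds-\tfrac{\beta^2}{2}\,\mathrm{renormalization}\Bigr)\Bigr]\ge0 .
\end{equation*}
Convergence $u^\eps\to u$ in $L^2$ holds chaos by chaos, dominated by the bounds above, so $u\ge0$. For strict positivity, the Feynman--Kac representation combined with $P_tu_0>0$ suggests an argument in the style of Mueller, as in Chen--Huang. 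I would take the route of~\cite{BCHOTW24}: use a weak comparison principle together with a small-ball lower bound to show $\PP(u(t,x)=0)=0$. This works because $p_t>0$ everywhere by~\eqref{eq:subGauss-intro}, so $P_tu_0(x)>0$ once $u_0\ne0$.
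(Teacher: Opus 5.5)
Your construction, the uniqueness via the chaos (Stroock) identification, and the $L^p$ bounds are correct. They differ from the paper only in bookkeeping. After bounding $|P_{s_1}u_0|\le\norm{u_0}_{L^\infty}$, the integrand of $J_n$ is nonnegative, so peeling from $s_1$ outward works directly. You only need the two-point bound~\eqref{eq:two-point-kernel} with \emph{distinct} base points $z,z'$, not the diagonal version you wrote. It holds uniformly because $p_{2r+\tau}(z,z')\le C(2r+\tau)^{-d_h/d_w}$. No fallback is needed, and in any case Lemma~\ref{lem:sobolev-IP} is an isometry, not an $L^q$ bound. The paper instead runs the recursion $M_{k+1}(t)\le C\beta^2\int_0^t(t-s)^{2\alpha-d_h/d_w}M_k(s)\,ds$ for $M_k(t)=\sup_x\EE|v_k(t,x)|^2$ with an exponential weight $e^{\rho t}$, in place of your Mittag-Leffler bound; the two are equivalent. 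Your Gronwall alternative for uniqueness would need $\sup_x\EE|v(t,x)|^2$ to be locally bounded a priori, which Definition~\ref{def:mild-PAM-solution} does not provide, so keep the chaos route.

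The genuine gap is strict positivity.
\begin{itemize}
\item From $p_t>0$ you get only $\EE[u(t,x)]=P_tu_0(x)>0$, i.e.\ $\PP(u(t,x)>0)>0$. Nonnegativity passes to $L^2(\Omega)$-limits, but strict positivity does not.
\item The Feynman--Kac formula does not survive $\eps\to0$: the Wick correction $\frac{\beta^2}2\norm{g^\eps_{t,B}}_{\cH}^2$ is of order $t\eps^{2\alpha-d_h/d_w}$ and blows up.
\item A small-ball lower bound, like the mean, only gives positivity on an event of positive probability.
\item A Mueller-type comparison argument would need spatial continuity, sup-norm tail bounds for stochastic convolutions and a mass-spreading iteration on $X$, none of which you set up.
\end{itemize}

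What is missing is an upgrade from ``nonnegative with positive mean'' to ``almost surely positive''. The paper obtains it softly from linearity. The $k$-th chaos kernel~\eqref{eq:fk} is proportional to $\beta^k$. So for the Ornstein--Uhlenbeck semigroup, $\mathcal Q_sI_k(f)=e^{-ks}I_k(f)$, one has $\mathcal Q_s\bigl(u_{\beta e^s}(t,x)\bigr)=u_\beta(t,x)$. The variable $u_{\beta e^s}(t,x)$ is nonnegative by your approximation step and has mean $P_tu_0(x)>0$, so it is nonzero. Since $\mathcal Q_s$ is positivity improving by Mehler's formula, $u_\beta(t,x)>0$ almost surely.
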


Theorem~\ref{thm:main-existence} follows from Theorem~\ref{thm:exist-unique-wholespace} and Proposition~\ref{prop:Lp-estimates}. The next result gives the local H\"older exponents in terms of the heat semigroup regularity.

\begin{mainthm}\label{thm:main-holder}
Let Assumption~\ref{hyp:dirichlet-form} hold and suppose that, for some $\kappa\in(0,1]$, the semigroup satisfies the weak Bakry--\'Emery condition $\mathrm{wBE}(\kappa)$ of~\cite{Besov3}: there is $C>0$ such that
\begin{equation}\label{eq:wBE-intro}
|P_tg(x)-P_tg(y)|\le C\,\frac{d(x,y)^\kappa}{t^{\kappa/d_w}}\,\|g\|_{L^\infty},\qquad t>0,\ g\in L^\infty(X,\mu),\ x,y\in X\,.
\end{equation}
Let $\alpha$ satisfy~\eqref{eq:alpha-range-intro} and let $\beta>0$. For $u_0\in L^\infty(X,\mu)$, let $u$ be the solution of~\eqref{eq:pam-intro}. Then $u$ admits a jointly continuous modification on $(0,\infty)\times X$. On every set $[T,T']\times B(x_0,R)$ with $0<T<T'<\infty$, $x_0\in X$ and $R>0$, this modification is $\gamma_t$-H\"older continuous in time and $\gamma_x$-H\"older continuous in space for any
\begin{equation*}
0<\gamma_t<\frac{(1+2\alpha)d_w-d_h}{2d_w},\qquad 0<\gamma_x<\min\Bigl\{\kappa,\frac{(1+2\alpha)d_w-d_h}{2}\Bigr\}\,.
\end{equation*}
\end{mainthm}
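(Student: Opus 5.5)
The plan is to prove $L^p$ increment bounds and then apply Kolmogorov's continuity criterion. The Kolmogorov step is carried out on $[T,T']\times B(x_0,R)$, viewed as a metric space with the parabolic-type metric $|t-s|^{a}+d(x,y)^{b}$. Ahlfors regularity gives finite doubling/Assouad dimension, so the chaining version of Kolmogorov (e.g.\ via Garsia--Rodemich--Rumsey on doubling spaces) applies once we have bounds of the form $\EE|u(t,x)-u(s,y)|^p\le C_p(|t-s|^{p\gamma_t'}+d(x,y)^{p\gamma_x'})$ for every $p$, with $\gamma_t'$, $\gamma_x'$ arbitrarily close to the stated thresholds.

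Write $u=P_tu_0+\beta I(t,x)$, where $I$ is the stochastic integral. The deterministic part is smooth enough. By $\mathrm{wBE}(\kappa)$, $|P_tu_0(x)-P_tu_0(y)|\le C T^{-\kappa/d_w}d(x,y)^\kappa\|u_0\|_\infty$ for $t\ge T$; this is the source of the cap $\kappa$ in $\gamma_x$. For time, $|P_tu_0-P_su_0|=|P_{s/2}(P_{t-s/2}-P_{s/2})u_0|$. I plan to use the analyticity of the semigroup, namely $\|\Lap P_r\|_{\infty\to\infty}\lesssim r^{-1}$, which follows from the sub-Gaussian bounds via the time-derivative heat kernel estimates. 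This gives a Lipschitz bound in $t$ on $[T,T']$.

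For the stochastic part I use BDG (Walsh--Dalang version) together with the uniform moment bound $\sup_{t\le T',x}\EE|u|^p<\infty$ from Proposition~\ref{prop:Lp-estimates}. Together with Minkowski, this gives
\begin{equation}
\EE|I(t,x)-I(t,y)|^p\lesssim \Bigl(\int_0^t\!\!\iint |D_{s}(z)||D_{s}(z')|G_{2\alpha}(z,z')\,d\mu\,d\mu\,ds\Bigr)^{p/2},
\end{equation}
where $D_s=p_{t-s}(x,\cdot)-p_{t-s}(y,\cdot)$, with the obvious analogue for time increments. When $\alpha=0$, $G$ is replaced by the diagonal. The task is then to bound the deterministic quantity $\int_0^t\|D_s\|^2_{\mathcal H_\alpha}\,ds$, where $\|f\|_{\mathcal H_\alpha}^2=\langle f,(-\Lap)^{-2\alpha}f\rangle$. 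For the spatial increment, write $\|D_s\|_{\mathcal H_\alpha}^2=\int_0^\infty \frac{r^{2\alpha-1}}{\Gamma(2\alpha)}\,\bigl(q_{2r+2(t-s)}(x,x)-2q(x,y)+q(y,y)\bigr)\,dr$, using the semigroup property $\iint p_a(x,z)p_r(z,z')p_a(z',y)=p_{2a+r}(x,y)$. For $\alpha=0$ only the single time $2(t-s)$ appears. This yields $\|D_\tau\|^2_{\mathcal H_\alpha}=\frac{1}{\Gamma(2\alpha)}\int_0^\infty r^{2\alpha-1}\,\Phi(2\tau+r)\,dr$, where $\Phi(\sigma)=p_\sigma(x,x)-2p_\sigma(x,y)+p_\sigma(y,y)$. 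Two bounds are available for $\Phi$: the crude one $\Phi(\sigma)\lesssim \sigma^{-d_h/d_w}$, and the one from $\mathrm{wBE}$ applied to $g=p_{\sigma/2}(\cdot,z)$ (via $p_\sigma=P_{\sigma/2}p_{\sigma/2}$), namely $\Phi(\sigma)\lesssim (d^{d_w}/\sigma)^{\kappa/d_w}\sigma^{-d_h/d_w}$. I would interpolate as $\min(1,(d^{d_w}/\sigma)^{\theta\kappa/d_w})\,\sigma^{-d_h/d_w}$ with $\theta\in[0,1]$, and integrate in $r$ and then $\tau$ (Lemma~\ref{lem:sup-kernel-estimates}-type computations). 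Splitting at $\sigma\approx d^{d_w}$ gives $\int_0^t\|D_\tau\|^2\,d\tau\lesssim d(x,y)^{\min(2\kappa-\epsilon,\,(1+2\alpha)d_w-d_h)}$, up to the usual $\epsilon$-loss at the borderline where the exponents coincide. This produces $\gamma_x<\min\{\kappa,((1+2\alpha)d_w-d_h)/2\}$.

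Time increments for $s<t$ split into two terms. The first is $\int_s^t\|p_{t-r}(x,\cdot)\|^2_{\mathcal H_\alpha}dr\simeq\int_0^{t-s}\tau^{2\alpha-d_h/d_w}\,d\tau\simeq (t-s)^{((1+2\alpha)d_w-d_h)/d_w}$. The second is $\int_0^s\|p_{t-r}(x,\cdot)-p_{s-r}(x,\cdot)\|^2_{\mathcal H_\alpha}dr$. I handle it with the same spectral expression, now with $\Phi(\sigma)=p_{\sigma+2h}(x,x)-2p_{\sigma+h}(x,x)+p_\sigma(x,x)$ where $h=t-s$. This is bounded by $\min(1,h/\sigma)^{2\theta}\sigma^{-d_h/d_w}$, using $|\partial_\sigma p_\sigma(x,x)|\lesssim\sigma^{-1-d_h/d_w}$, and it gives the same exponent, hence $\gamma_t<((1+2\alpha)d_w-d_h)/(2d_w)$. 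Hypercontractivity is not needed because BDG already handles every $p$.

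The main obstacle is the spatial estimate. Specifically, one has to justify the $\mathrm{wBE}$-based bound on the second-difference kernel $\Phi$, and to carry out the interpolation carefully so that the exponent $\min\{2\kappa,(1+2\alpha)d_w-d_h\}$ is reached without extra losses when $\alpha>0$. There the $r$-integral must converge at both ends, which uses $2\alpha<d_h/d_w$ at infinity and the Dalang-type condition at the origin. Once this and the time-derivative heat kernel bound are established, Kolmogorov's criterion on the Ahlfors-regular set, with $p\to\infty$, gives the continuous modification and all the stated exponents.
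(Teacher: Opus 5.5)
There is a genuine gap at the central step, and it matters when $\alpha>0$. After BDG and Minkowski you correctly arrive at $\int_0^t\iint|D_s(z)||D_s(z')|G_{2\alpha}(z,z')\,d\mu(z)\,d\mu(z')\,ds$, that is, at $\int_0^t\norm{|D_s|}^2_{\cW^{-\alpha}}\,ds$. The absolute values cannot be avoided: the weights $u(s,z)u(s,z')$ are random and not constant in space, so you can only pull out $\sup\norm{u}^2_{L^p(\Omega)}$ after taking absolute values.

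You then switch to the signed norm $\norm{D_s}^2_{\cW^{-\alpha}}$ and evaluate it with the semigroup identity \eqref{eq:two-point-kernel}. This produces the second difference $\Phi(\sigma)=p_\sigma(x,x)-2p_\sigma(x,y)+p_\sigma(y,y)$, and in time $p_{\sigma+2h}(x,x)-2p_{\sigma+h}(x,x)+p_\sigma(x,x)$. Since $G_{2\alpha}\ge0$, one has $\norm{D_s}_{\cW^{-\alpha}}\le\norm{|D_s|}_{\cW^{-\alpha}}$. So bounds on the signed norm, which rely precisely on cancellation in $D_s$, give no control of the quantity you actually need. Moreover, $|D_s|$ is not a linear combination of heat kernels, so no spectral formula is available for it. For $\alpha=0$ the two norms coincide, which is why the problem is invisible there.

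The missing ingredient is a pointwise bound on $|D_\tau|$ by heat kernels.
\begin{itemize}
\item Interpolate the uniform estimate $|p_\tau(x,z)-p_\tau(y,z)|\le Cd(x,y)^\kappa\tau^{-(\kappa+d_h)/d_w}$, which wBE gives, with the trivial bound $p_\tau(x,z)+p_\tau(y,z)$.
\item Use the sub-Gaussian bounds to absorb $p_\tau^{1-\vartheta}\le C\tau^{\vartheta d_h/d_w}p_{c\tau}$. This yields $|D_\tau|\le C\bigl(d(x,y)\tau^{-1/d_w}\bigr)^{\kappa\vartheta}\bigl[p_{c\tau}(x,\cdot)+p_{c\tau}(y,\cdot)\bigr]$.
\item Monotonicity of $\norm{\cdot}_{\cW^{-\alpha}}$ on nonnegative functions and Lemma~\ref{lem:sup-kernel-estimates} then give $\norm{|D_\tau|}^2_{\cW^{-\alpha}}\le Cd(x,y)^{2\kappa\vartheta}\tau^{2\alpha-d_h/d_w-2\kappa\vartheta/d_w}$. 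This is integrable near $\tau=0$ exactly when $\kappa\vartheta<((1+2\alpha)d_w-d_h)/2$.
\item Time increments are handled the same way using $|\partial_tp_t|\le Ct^{-1-d_h/d_w}$.
\end{itemize}
This is the content of Lemma~\ref{lem:weighted-increments} and Corollary~\ref{cor:W-increments}, on which the paper's proof rests.

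There are also two smaller issues.
\begin{itemize}
\item Even for the signed quantity, your bound on $\Phi$ is linear in $d(x,y)^\kappa$. Integrating it gives only $d(x,y)^{\min\{\kappa,(1+2\alpha)d_w-d_h\}}$ for the squared norm, i.e.\ spatial exponent $\min\{\kappa/2,((1+2\alpha)d_w-d_h)/2\}$, not the claimed $2\kappa-\epsilon$. To reach $2\kappa$ you need a bound quadratic in $d(x,y)^\kappa$. One option is $\Phi(\sigma)\le\norm{D_{\sigma/2}}_{L^\infty}\norm{D_{\sigma/2}}_{L^1}$, with the $L^1$ bound obtained from wBE by duality.
\item The estimate $\norm{\Lap P_r}_{\infty\to\infty}\lesssim r^{-1}$ does not follow from the uniform bound $|\partial_tp_t|\le Ct^{-1-d_h/d_w}$ alone, since that bound is not integrable over $X$. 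The paper instead integrates \eqref{eq:weighted-time} against $|u_0|$.
\end{itemize}
The Kolmogorov step on the Ahlfors-regular cylinder is fine.
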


Theorem~\ref{thm:main-holder}, including moment estimates for the increments, is proved as Theorem~\ref{thm:Holder}. Assumption~\ref{hyp:dirichlet-form} already implies~\eqref{eq:wBE-intro} for some $\kappa\in(0,1)$, by the parabolic Harnack inequality and the H\"older regularity of caloric functions~\cite{BarlowGrigoryanKumagai2012}. The optimal value of $\kappa$, however, reflects finer geometric information. One may take $\kappa=1$ on complete Riemannian manifolds with nonnegative Ricci curvature, while on the Sierpi\'nski gasket the optimal exponent is $\kappa=d_w-d_h$~\cite{Besov3}; see also~\cite{Besov1,Besov3} for the role of this condition in the analysis of Dirichlet spaces. On $\R^d$, the limiting exponents are $(2+4\alpha-d)/4$ in time and $(2+4\alpha-d)/2$ in space, in agreement with~\cite[Theorem~1.8]{ChenHuang2019}. For the Sierpi\'nski gasket with white noise, they are $(d_w-d_h)/(2d_w)$ and $(d_w-d_h)/2$, respectively. Expressed in the metric $d$, these agree with the compact-fractal exponents in~\cite{HamblyYang2018,HamblyYang2026}.

\begin{mainthm}\label{thm:main-lyapunov}
Let Assumption~\ref{hyp:dirichlet-form} hold, let $\alpha$ satisfy~\eqref{eq:alpha-range-intro}, and let $\beta>0$. For every real $p\ge1$, every nonzero nonnegative $u_0\in L^\infty(X,\mu)$ and every $x\in X$, the finite limit
\begin{equation}\label{eq:lyapunov-intro}
\lambda_p(\beta):=\lim_{t\to\infty}\frac1t\log\EE\bigl[u(t,x)^p\bigr]
\end{equation}
exists and depends neither on $x$ nor on $u_0$.
\end{mainthm}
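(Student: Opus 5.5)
The plan is to prove existence of the limit in~\eqref{eq:lyapunov-intro} by a subadditivity argument, applied first to the constant initial condition $u_0\equiv1$. Independence of $x$ and $u_0$ is then handled by comparison.

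\textbf{Step 1: the flat initial condition.} Let $u^{(1)}$ denote the solution with $u_0\equiv1$. We do not have spatial homogeneity, so $\EE[u^{(1)}(t,x)^p]$ may depend on $x$. We therefore work with
\[
M_p(t):=\sup_{x}\EE[u^{(1)}(t,x)^p],\qquad m_p(t):=\inf_x\EE[u^{(1)}(t,x)^p].
\]
Both are finite by Theorem~\ref{thm:main-existence}.

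The key structural fact is the Markov (flow) property of the linear equation. For $s<t$ we have
\[
u(t,x)=\int_X G_{s,t}(x,y)\,u(s,y)\,d\mu(y),
\]
where $G_{s,t}$ is the propagator solving the equation on $[s,t]$ started from $\delta_y$. The noise on $[s,t]$ is independent of $\mathcal F_s$, and $G_{s,t}\ge0$.

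\emph{Upper bound.} Apply Jensen/H\"older with the probability measure $G_{s,t}(x,y)\,d\mu(y)/\int G_{s,t}(x,\cdot)$. Together with independence, this gives
\[
\EE[u^{(1)}(t,x)^p]\le \EE\Bigl[\Bigl(\textstyle\int G_{s,t}(x,y)\,d\mu(y)\Bigr)^{p}\Bigr]\,\sup_y\EE[u^{(1)}(s,y)^p].
\]
The first factor is $\EE[\tilde u^{(1)}(t-s,x)^p]$ for the time-shifted noise, which has the same law. Hence $M_p(t)\le M_p(t-s)M_p(s)$, so $\log M_p$ is subadditive and $\lim t^{-1}\log M_p(t)=\inf_t t^{-1}\log M_p(t)$ exists.

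For $p\ge1$ the Jensen step is legitimate. To avoid delta initial data, I would rather implement the flow property via the chaos expansion or a Feynman--Kac-type representation at the level of the Picard iterates.

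\emph{Lower bound.} A reverse inequality $m_p(t)\ge m_p(t-s)m_p(s)$ does not follow from Jensen, so I would not try superadditivity. Instead I would show directly that $M_p(t)\le C\,e^{o(t)}\,m_p(t)$.

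\textbf{Step 2: comparison of points and initial data.} This is the main obstacle. I need two-sided comparability
\[
\EE[u(t,x)^p]\ \asymp_{e^{o(t)}}\ \EE[u^{(1)}(t,x')^p],
\]
uniformly over $x,x'$ in bounded sets and for fixed $u_0$.

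\emph{Comparison with the flat solution.} Write $u_0\le\|u_0\|_\infty$; by linearity and positivity, $u\le\|u_0\|_\infty u^{(1)}$. For the reverse direction, run the flow for a unit time. At time $1$, $u(1,\cdot)$ is a positive random field. The idea is to restart from $u(1,\cdot)\ge c\,\1_{B(x,1)}$ on an event of positive probability, and to show that starting from $\1_{B(x,R)}$ loses at most $e^{o(t)}$ compared to flat data.

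That last claim is the heart of the proof. I would use a localization argument: the process $x\mapsto u(t,x)$ at time $t$ depends on the initial data essentially only within distance $t^{1/d_w}\cdot t^{\text{something}}$. Moments grow at most like $e^{Ct}$ (the Lyapunov bound from the chaos estimates), while the heat kernel tails \eqref{eq:subGauss-intro} are super-exponentially small beyond distance of order $t$. Concretely, I would bound $\EE[(u^{(1)}-u^{\1_{B(x,Kt)}})(t,x)^p]$ by a chaos-series estimate in which each term carries a factor
\[
\sup_y P_{t}\1_{B(x,Kt)^c}(y)\lesssim \exp\bigl(-c\,(K^{d_w}t^{d_w-1})^{1/(d_w-1)}\bigr)=e^{-cK^{d_w/(d_w-1)}t}.
\]
For $K$ large, this beats $e^{Ct}$, so flat data and $\1_{B(x,Kt)}$ data have the same exponent.

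Passing from a ball of radius $Kt$ to a ball of radius $1$ should cost only subexponential factors. This uses the parabolic Harnack/near-diagonal lower bound $p_s(x,y)\gtrsim s^{-d_h/d_w}$ for $d(x,y)\le s^{1/d_w}$: spend time $t^{\theta}$ with $\theta<1$ spreading mass polynomially, controlling the noise contribution during that short interval by H\"older's inequality and the moment bounds.

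Independence of $x$ follows similarly from Ahlfors regularity and uniform heat kernel bounds, since $M_p$ and $m_p$ then share the exponent. Finally, for $1\le p<2$, the same arguments apply once $L^p$ positivity is available; the Jensen step needs only $p\ge1$.
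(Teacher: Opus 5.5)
Your Step~1 is sound: with a nonnegative propagator, Jensen and independence give $M_p(t)\le M_p(s)M_p(t-s)$, so $\lim_t t^{-1}\log M_p(t)$ exists. However, this only yields the easy inequality $\limsup_t t^{-1}\log\EE[u(t,x)^p]\le\lim_t t^{-1}\log M_p(t)$. The matching lower bound at a fixed point is where the plan fails, for two reasons.

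\emph{The intermediate claim is false.} The claim $M_p(t)\le e^{o(t)}m_p(t)$ does not hold in general without spatial homogeneity. Take $X=\R$, $\mu=w\,dx$ and $\cE(f,f)=\int a|f'|^2\,dx$, with $w=a\in\{1,2\}$ on alternating intervals of doubling lengths. Assumption~\ref{hyp:dirichlet-form} holds with $d_h=1$, $d_w=2$, and $\alpha=0$ is admissible. At the centre of a block of length much larger than $t$, the time-$t$ moments agree up to $e^{o(t)}$ with those of the homogeneous PAM with coupling $\beta$ or $\beta/\sqrt2$. Indeed, on a block with $w=a=2$ one has $\Lap f=f''$ and $p^\mu=p^{\mathrm{Leb}}/2$, while the noise is $\sqrt2$ times Lebesgue white noise. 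Hence, for $p>1$, $\sup_x$ and $\inf_x$ of the flat moments grow at rates differing by a factor $4$. The pointwise exponent is still independent of $x$, but this cannot be obtained by comparing the supremum with the infimum.

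\emph{The localization step fails quantitatively.} To beat $e^{Ct}$, the truncation forces radius $Kt$. (The tail factor should also be $P_{\sigma t}\1_{B(x,Kt)^c}(x)$; your supremum over $y$ equals $1$.) But moving mass from $B(x,1)$ to distance $Kt$ in time $t^\theta$ costs a factor $\exp(-cK^{d_w/(d_w-1)}t^{(d_w-\theta)/(d_w-1)})$. This is super-exponential because $\theta<1$, and even in time $\eps t$ the cost is $e^{-C_{K,\eps}t}$ with $C_{K,\eps}$ large. Nothing in the proposal bridges the scales $1$ and $t$ at subexponential cost.

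The missing ideas are (a) a supermultiplicative quantity anchored at a fixed point, and (b) a localization that costs only polynomial factors.

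For (a), the paper takes $f=p_\sigma(o,\cdot)$ and $a(t)=\EE[(U_tf(o))^p]$. It splits time into three intervals and averages out the independent middle operator, using $\EE\,U_{s,s+L}=P_L$. The rank-one minorization $p_L(z,y)\ge c\,f(z)f(y)$ and conditional Jensen then give $a(s+t+L-1)\ge c'a(s)a(t)$. This yields an exponential rate at $o$.

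For (b), the paper writes $\mathcal Z_{0,2t}(o,o)=\int_X\mathcal Z_{t,2t}(o,z)\mathcal Z_{0,t}(z,o)\,d\mu(z)$. By time reversal, the two factors are independent copies of $\mathcal Z_{0,t}(o,\cdot)$. The inequality $\norm{\int_DFF'\,d\mu}_{L^p}\ge\mu(D)^{-1}\norm{\int_DF\,d\mu}_{L^p}^2$ then gives $\EE[(U_t\1_D(o))^p]\le\mu(D)^{p/2}\EE[\mathcal Z_{0,2t}(o,o)^p]^{1/2}\le C\mu(D)^{p/2}a(2t-1)^{1/2}$. For $D=B(o,rt)$ the loss $\mu(D)^{p/2}$ is only polynomial, and the square root is compensated by the doubled time.

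This argument replaces your spreading step. Independence of $x$ and $u_0$ then follows, as you suggest, from heat kernel comparisons at fixed distances.
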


Theorem~\ref{thm:main-lyapunov} is proved as Theorem~\ref{thm:lyapunov-main}. In the one-dimensional space--time white-noise setting, exact Lyapunov exponents at every positive real order were obtained by Das and Tsai~\cite{DasTsai2021} for a Dirac initial condition and by Ghosal and Lin~\cite{GhosalLin2023} for a broad class of initial conditions. Our result establishes existence and independence of the spatial point and initial condition, for every real $p\ge1$. It applies to both colored noise and the admissible white-noise case. The next theorem gives quantitative bounds on these exponents.

The constants $c,C>0$ below depend only on $\alpha$ and the data in Assumption~\ref{hyp:dirichlet-form}; in particular, they are independent of $\beta$ and $p$. The dependence on $\beta$ agrees with the exact discrete scaling on the infinite Sierpi\'nski gasket; see Proposition~\ref{prop:sg-solution-scaling} and Remark~\ref{rem:lyapunov-scaling}.

\begin{mainthm}\label{thm:main-moments}
Let Assumption~\ref{hyp:dirichlet-form} hold, let $\alpha$ satisfy~\eqref{eq:alpha-range-intro} and let $\beta>0$. Let $\lambda_p(\beta)$ be the exponents~\eqref{eq:lyapunov-intro} associated with nonzero nonnegative bounded initial conditions.
\begin{enumerate}
\item For every real $p\ge2$,
\begin{equation}\label{eq:UB-intro}
\lambda_p(\beta)\le C\,p\,\bigl(\beta^2(p-1)\bigr)^{\frac{d_w}{(1+2\alpha)d_w-d_h}}\,.
\end{equation}
\item For every real $p>1$,
\begin{equation}\label{eq:LB-intro}
\lambda_p(\beta)\ge c\,p\,\bigl(\beta^2(p-1)\bigr)^{\frac{d_w}{(1+2\alpha)d_w-d_h}}\,.
\end{equation}
\end{enumerate}
Moreover, $\lambda_1(\beta)=0$, the function $p\mapsto\lambda_p(\beta)/p$ is strictly increasing on $(1,\infty)$, and
\begin{equation}\label{eq:intermittency-intro}
\lim_{p\to\infty}\frac{\lambda_p(\beta)}p=\infty\,.
\end{equation}
\end{mainthm}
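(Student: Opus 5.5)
By Theorem~\ref{thm:main-lyapunov}, $\lambda_p(\beta)$ does not depend on $x$ or on the initial condition, so I will work with $u_0\equiv1$. I write $\theta:=(1+2\alpha)d_w-d_h\in(0,d_w)$.

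\emph{Upper bound.} The plan is to bound the chaos expansion using hypercontractivity. The $n$-th chaos has $L^p$ norm at most $(p-1)^{n/2}$ times its $L^2$ norm. The $L^2$ norm of the $n$-th chaos, evaluated at $(t,x)$, is an $n$-fold iterated time integral of products of kernels of the form $\int\!\!\int p_{s}(x,y)p_{s}(x,y')G_{2\alpha}(y,y')\,d\mu\,d\mu$. By the Chapman--Kolmogorov property and Lemma~\ref{lem:sup-kernel-estimates}, this quantity is bounded by $C s^{-(d_h-2\alpha d_w)/d_w}=Cs^{\theta/d_w-1}$.

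Iterating gives a Dirichlet-type integral:
\[
\|I_n(t,x)\|_2^2\le \frac{(C\beta^2)^n\,\Gamma(\theta/d_w)^n\,t^{n\theta/d_w}}{\Gamma(n\theta/d_w+1)}.
\]
I then apply Minkowski's inequality to the sum over chaoses:
\[
\|u(t,x)\|_p\le\sum_n \bigl((p-1)C\beta^2\bigr)^{n/2}\frac{t^{n\theta/(2d_w)}}{\Gamma(n\theta/d_w+1)^{1/2}}.
\]
This is a Mittag-Leffler function, and its exponential growth rate in $t$ is $\bigl((p-1)\beta^2C\bigr)^{d_w/\theta}$ up to constants. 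Raising to the power $p$ gives \eqref{eq:UB-intro}.

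\emph{Lower bound.} This is the step I expect to be the main obstacle. For $p\ge2$, I would first compute a lower bound on the second moment $\EE[u(t,x)^2]=\sum_n\|I_n\|_2^2$. Each chaos term can be bounded below using the lower heat kernel estimate together with the lower bound $G_{2\alpha}\gtrsim d(y,y')^{2\alpha d_w-d_h}$. Restricting the integrals to balls of radius $s^{1/d_w}$ gives a matching Mittag-Leffler lower bound, hence $\lambda_2\ge c\beta^{2d_w/\theta}$.

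For general $p$, I would pass through a scaling argument. Replacing $\beta^2$ by $\beta^2(p-1)$ should correspond to a Gaussian comparison: moments of order $p$ dominate a two-point correlation with effective coupling $\beta^2(p-1)$. The route I would take is the Feynman--Kac representation for integer $p$, in which $\EE[u^p]=\EE\exp\bigl(\beta^2\sum_{i<j}\int_0^t G_{2\alpha}(B^i_s,B^j_s)\,ds\bigr)$, interpreted via chaos when $\alpha=0$. The lower bound would come from restricting all $p$ Brownian motions to stay within distance $r$ of each other for the whole time, with $r$ optimized. The interaction then contributes $p^2\beta^2 r^{-(d_h-2\alpha d_w)}$ per unit time, while the confinement costs $p\,r^{-d_w}$ per unit time, using small-ball and tube estimates from the sub-Gaussian bounds. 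Optimizing in $r$ yields $p(p\beta^2)^{d_w/\theta}$. For non-integer $p>1$, I would interpolate, or use a hypercontractive lower bound via $\EE[u^p]\ge \EE[u^{\lfloor p\rfloor}]^{\dots}$, and handle $p$ close to $1$ by convexity. Making this uniform in $p-1$ (rather than $p$) near $p=1$ is delicate. I would first try using the convexity of $p\mapsto\lambda_p$ together with $\lambda_1=0$ and the value at $p=2$.

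\emph{Remaining claims.} The identity $\lambda_1=0$ follows from $\EE u(t,x)=P_tu_0(x)$, which is bounded above by $\|u_0\|_\infty$ and bounded below by a positive constant times $t^{-d_h/d_w}$ by the heat kernel lower bound. Since $\log\EE u^p$ is convex in $p$, $\lambda_p$ is convex in $p$ with $\lambda_1=0$, so $\lambda_p/p$ is nondecreasing. Strict increase follows because $\lambda_p>0$ for $p>1$ by \eqref{eq:LB-intro}, combined with convexity: $(\lambda_p-\lambda_1)/(p-1)$ is nondecreasing, so $\lambda_p/p=\frac{p-1}{p}\cdot\frac{\lambda_p}{p-1}$ is a product of a strictly increasing positive factor and a nondecreasing positive factor. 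Finally, \eqref{eq:intermittency-intro} is immediate from \eqref{eq:LB-intro}.
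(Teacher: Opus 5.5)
Your upper bound (hypercontractivity plus the sup-recursion on chaos norms, summed as a Mittag-Leffler-type series) is the paper's argument with different bookkeeping. Your deductions of $\lambda_1(\beta)=0$, and of strict monotonicity from convexity plus positivity, are sound. For $\alpha>0$ and integer $p\ge2$, the $p$-particle Feynman--Kac confinement bound is a legitimate alternative, modulo justifying the moment formula. It extends to real $p\ge2$ through $\lambda_p/p\ge\lambda_{\lfloor p\rfloor}/\lfloor p\rfloor$ and $\lfloor p\rfloor-1\ge(p-1)/2$.

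\textbf{The gap: real $p\in(1,2)$.} Convexity of $p\mapsto\lambda_p(\beta)$, together with $\lambda_1(\beta)=0$ and the value at $p=2$, only gives the chord \emph{upper} bound $\lambda_p(\beta)\le(p-1)\lambda_2(\beta)$. Lyapunov's inequality only gives $\lambda_p(\beta)\ge0$. The convex profile $\max\{0,k(p-p_0)\}$ with $1<p_0<2$ satisfies all of these constraints. So you get neither the rate $(p-1)^{d_w/((1+2\alpha)d_w-d_h)}$ nor even $\lambda_p(\beta)>0$ on $(1,2)$. Consequently your strict-monotonicity argument, which uses $\lambda_p(\beta)>0$ for every $p>1$, also breaks down on $(1,2]$.

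\textbf{The missing idea.} You need a duality bound that works directly for real $p$. Take $h(s,y)=\1_{(0,t)}(s)\psi(y)$ and $Z=\exp\bigl(W_\alpha(h)-\frac12\norm{h}^2_{\cH_\alpha}\bigr)$. H\"older gives $\norm{u(t,x)}_{L^p(\Omega)}\ge\EE[u(t,x)Z]/\norm{Z}_{L^{p/(p-1)}(\Omega)}$. Pairing chaos expansions identifies $\EE[u(t,x)Z]$ with $\EE_x\bigl[u_0(B_t)\exp\bigl(\beta\int_0^tV_\psi(B_s)\,ds\bigr)\bigr]$, where $V_\psi=G_{2\alpha}\psi$ is a deterministic potential ($V_\psi=\psi$ if $\alpha=0$). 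Moreover $\norm{Z}_{L^{p/(p-1)}(\Omega)}=\exp\bigl(t\norm{\psi}^2_{\cW^{-\alpha}}/(2(p-1))\bigr)$, which is exactly where $p-1$ enters. Choose $\psi=\eta\1_{B(x,r)}$ and use the single-particle confinement bound~\eqref{eq:exit-LB}. Optimizing first in $\eta$ and then in $r$ yields $c\,p\,(\beta^2(p-1))^{d_w/((1+2\alpha)d_w-d_h)}$ for all real $p>1$ simultaneously.

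\textbf{A second weak point at $\alpha=0$.} There your $p$-particle formula involves intersection local times. Confining the particles to a ball gives no pathwise lower bound on $\int_0^t\delta(B^i_s-B^j_s)\,ds$, so the ``interaction per unit time'' heuristic is unjustified. The Gaussian test avoids this, since it needs only one Brownian motion and the bounded potential $\psi$.
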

For $1<p<2$, H\"older interpolation between the first and second moments, together with $\lambda_1(\beta)=0$, gives
\begin{equation*}
\lambda_p(\beta)\le(p-1)\lambda_2(\beta)\le 2C(p-1)\,\beta^{\frac{2d_w}{(1+2\alpha)d_w-d_h}}\,.
\end{equation*}
The upper bound~\eqref{eq:UB-intro} does not, in general, extend to $1<p<2$ with a constant uniform in $p$. Indeed, for $X=\R$, $\alpha=0$ and $u_0\equiv1$, Theorem~1.1 of~\cite{GhosalLin2023}, rescaled to the generator $\Delta$, gives
\begin{equation*}
\lambda_p(\beta)=\frac{\beta^4}{48}(p^3-p)\sim\frac{\beta^4}{24}(p-1)
\qquad\text{as }p\downarrow1\,.
\end{equation*}
Since $d_h=1$ and $d_w=2$, the right-hand side of~\eqref{eq:UB-intro} has order $\beta^4p(p-1)^2$, which cannot bound this linear behavior near $p=1$. The interpolation estimate therefore has the correct order in $p-1$ in this example.

Theorem~\ref{thm:main-moments} follows from Theorem~\ref{thm:lyapunov-main}, Proposition~\ref{prop:Lp-estimates} and Theorem~\ref{thm:moment-LB}; see Corollary~\ref{cor:lyapunov-bounds}. The upper moment bound also holds for arbitrary bounded initial conditions. Strict increase of $p\mapsto\lambda_p(\beta)/p$, together with~\eqref{eq:intermittency-intro}, gives full intermittency in the sense of~\cite[Section~1.2]{TindelAarhus2026}. For $p\ge2$, the bounds have matching dependence on $\beta$ and $p$. In particular, on $\R$ with white noise they have order $\beta^4p^3$, consistent with the exact asymptotics in~\cite{Ch15}. The comparison with the compact-fractal moment bounds of~\cite{HamblyYang2026} appears in Remark~\ref{rem:compact-moment-comparison}.

\paragraph{Proof strategy and relation to the literature.}
Existence, uniqueness and upper moment bounds follow from Wiener chaos expansions and heat kernel estimates, extending the approach of~\cite{BCHOTW24}. For H\"older continuity, we use the weak Bakry--\'Emery estimates developed in~\cite{Besov3}. To prove existence of the Lyapunov exponents, we first construct the fundamental solution and its stochastic flow, extending the framework of Alberts, Khanin and Quastel~\cite{AlbertsKhaninQuastel2014}. We then adapt the moment comparisons of Berger, Chong and Lacoin~\cite[Lemma~5.1 and proof of Proposition~2.8]{BergerChongLacoin2023}.The argument applies to every real $p\ge1$ without spatial translation invariance. Finally, the Gaussian exponential test of~\cite[Proposition~3.20]{ChenDeyaOuyangTindel2021}, combined with confinement of the diffusion to a ball, gives lower bounds for every real $p>1$.

\paragraph{Organization.}
Section~\ref{sec:existence} develops the analytic preliminaries, constructs the noise and the solution, and proves well-posedness, positivity, finite-time moment bounds and H\"older regularity. Section~\ref{sec:lyapunov} constructs the fundamental solution and proves the existence and universality of the moment Lyapunov exponents. Section~\ref{sec:moment-bounds} establishes their two-sided bounds and derives intermittency. Section~\ref{sec:infinite-gasket} illustrates these results in the infinite Sierpi\'nski gasket and proves the discrete scaling law of the solution.

\paragraph{Conventions.}
Throughout the paper, $C,c>0$ denote constants whose values may change from line to line. Unless otherwise stated, they depend only on $\alpha$ and the data in Assumption~\ref{hyp:dirichlet-form}. We write $\R_+=(0,\infty)$ and $B(x,r)=\{y\in X:d(x,y)<r\}$.

\section{Existence, uniqueness and H\"older regularity of the solution}\label{sec:existence}

We first start with some needed preliminaries on Dirichlet spaces. We then construct the noise, and obtain the PAM solution from its Wiener chaos expansion. The chaos estimates yield existence, uniqueness and finite-time $L^p$ bounds. We show that the solution is non-negative if the initial condition is. We finally prove H\"older regularity by using weak Bakry-\'Emery type estimates \cite{Besov3} for the heat semigroup.

\subsection{Dirichlet space, heat kernel and exit time estimates}\label{sec:setting}\label{sec:heat-kernel}

Throughout the paper we work under the following standing assumption.

\begin{assump}\label{hyp:dirichlet-form}
Let $(X,d,\mu)$ be a locally compact, complete, unbounded, geodesic metric measure space, and let $(\cE,\cF)$ be a Dirichlet form on $L^2(X,\mu)$ with associated semigroup $\{P_t\}_{t\ge0}$ and nonpositive self-adjoint generator $\Lap$. We assume:
\begin{enumerate}
\item The measure $\mu$ is Ahlfors $d_h$-regular: there are $d_h>0$ and constants $c,C>0$ such that, for all $x\in X$ and $r>0$,
\begin{equation*}
cr^{d_h}\le\mu\bigl(B(x,r)\bigr)\le Cr^{d_h}\,.
\end{equation*}
\item $\cE$ is regular and strongly local. 
\item The semigroup is stochastically complete: $P_t1=1$ for every $t\ge0$.
\item $\{P_t\}_{t\ge0}$ admits a heat kernel $p_t(x,y)$, jointly continuous on $(0,\infty)\times X\times X$, satisfying two-sided sub-Gaussian estimates: there are $d_w\ge2$ and constants $c,C>0$ such that, for all $t>0$ and $x,y\in X$,
\begin{equation}\label{eq:subGauss}
ct^{-d_h/d_w}\exp\biggl(-C\Bigl(\frac{d(x,y)^{d_w}}{t}\Bigr)^{\frac{1}{d_w-1}}\biggr)
\le p_{t}(x,y)\le Ct^{-d_h/d_w}\exp\biggl(-c\Bigl(\frac{d(x,y)^{d_w}}{t}\Bigr)^{\frac{1}{d_w-1}}\biggr)\,.
\end{equation}
\item The Hunt process $(\Omega,(B_t)_{t\ge0},(\PP_x)_{x\in X})$ associated with $(\cE,\cF)$ is defined for every starting point $x\in X$, has transition density $p_t$ with respect to $\mu$, i.e.\ $\PP_x(B_t\in A)=\int_Ap_t(x,y)\,d\mu(y)$ for every Borel set $A$, and is a strong Markov process with continuous sample paths.
\end{enumerate}
\end{assump}

Unlike \cite{BCHOTW24} we do \emph{not} assume $d_h<d_w$: the Hunt process may be transient. Since $p_t$ is continuous and $\mu$ has full support, the estimates~\eqref{eq:subGauss} hold for all $(x,y)$ as soon as they hold for $\mu\otimes\mu$-almost every $(x,y)$. The Chapman-Kolmogorov identity
\begin{equation}\label{eq:chapman}
p_{s+t}(x,y)=\int_X p_s(x,z)p_t(z,y)\,d\mu(z)\qquad\text{and}\qquad \int_X p_t(x,y)\,d\mu(y)=1
\end{equation}
hold for every $s,t>0$ and $x,y\in X$. Item (v) is automatic in the examples we have in mind. Indeed, the strong locality of $\cE$ means that the Hunt process is a diffusion~\cite[Section 4.5]{FOT2011}, and the continuity of $p_t$ together with the upper bound in~\eqref{eq:subGauss} imply that $P_t$ maps bounded measurable functions to bounded continuous functions, so that the process can be started from every point; see~\cite{Barlow1998,Kigami2001,BarlowBass1999} for the fractal examples, \cite{BBK2006,GrigoryanTelcs2012} for the general theory, and~\cite[Section 2]{BCHOTW24}. We refer to $(B_t)_{t\ge0}$ as the Brownian motion on $X$, and we write $\EE_x$ for the expectation under $\PP_x$.

\begin{example}
Assumption~\ref{hyp:dirichlet-form} is satisfied by $\R^d$ with Lebesgue measure and the standard Dirichlet form, Carnot groups of homogeneous dimension $Q$, and complete $d$-dimensional Riemannian manifolds with nonnegative Ricci curvature and maximal volume growth. In these cases $d_w=2$ and $d_h=d,Q,d$, respectively. It also holds on unbounded nested fractals~\cite{Barlow1998,Kigami2001}, including the infinite Sierpi\'nski gasket~\cite{BarlowPerkins1988}, for which $d_h=\log3/\log2$ and $d_w=\log5/\log2$, and on unbounded generalized Sierpi\'nski carpets~\cite{BarlowBass1999}. In the last two families $d_w>2$, and both the strictly recurrent regime $d_h<d_w$ and the regime $d_h\ge d_w$ occur.
\end{example}

\begin{notation}\label{not:parameters}
Throughout the paper, unless otherwise stated, we consider a parameter $\alpha$ which satisfies
\begin{equation}\label{eq:alpha-range}
0\le\alpha<\frac{d_h}{2d_w},\qquad \alpha>\frac{d_h}{2d_w}-\frac12\,,
\end{equation}
so that
\begin{equation}\label{eq:parameter-inequalities}
0<1+2\alpha-\frac{d_h}{d_w}<1,\qquad 0<d_h-2\alpha d_w<d_w\,.
\end{equation}
Thus $\alpha=0$ is admissible if and only if $d_h<d_w$.
\end{notation}

Symmetry and the semigroup identity~\eqref{eq:chapman} give
\begin{equation}\label{eq:heatkernel-square-bound}
\int_Xp_t(x,y)^2\,d\mu(y)=p_{2t}(x,x)\le Ct^{-\frac{d_h}{d_w}},
\qquad t>0,\ x\in X\,.
\end{equation}
Positivity and stochastic completeness also give the contraction bound
\begin{equation}\label{eq:semigroup-infinity-bound}
\norm{P_tu_0}_{L^\infty}\le\norm{u_0}_{L^\infty},
\qquad t\ge0,\ u_0\in L^\infty(X,\mu)\,.
\end{equation}

The next lemma converts the off-diagonal heat kernel bound into a tail estimate for the diffusion. Such estimates are classical; see~\cite[Section~3]{Barlow1998} and~\cite{GrigoryanTelcs2012}. We include the short proof.

\begin{lem}\label{lem:tail-exit-UB}
There exist constants $C,c>0$ such that for all $z\in X$, $s>0$ and $R\ge s^{1/d_w}$,
\begin{equation}\label{eq:oneParticleTail}
\PP_z\bigl(d(B_s,z)\ge R\bigr)\le C\exp\Bigl(-c\Bigl(\frac{R^{d_w}}{s}\Bigr)^{\frac1{d_w-1}}\Bigr)\,.
\end{equation}
\end{lem}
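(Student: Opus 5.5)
We plan to integrate the upper bound in~\eqref{eq:subGauss} over the complement of the ball $B(z,R)$, using a dyadic decomposition into annuli and Ahlfors regularity. By item~(v) of Assumption~\ref{hyp:dirichlet-form},
\begin{equation*}
\PP_z\bigl(d(B_s,z)\ge R\bigr)=\int_{\{d(y,z)\ge R\}}p_s(z,y)\,d\mu(y)
\le\sum_{k\ge0}\int_{A_k}p_s(z,y)\,d\mu(y)\,,
\end{equation*}
where $A_k=\{y:2^kR\le d(y,z)<2^{k+1}R\}$. On $A_k$, the upper bound in~\eqref{eq:subGauss} gives
\begin{equation*}
p_s(z,y)\le Cs^{-d_h/d_w}\exp\bigl(-c\,(2^kR)^{\frac{d_w}{d_w-1}}s^{-\frac1{d_w-1}}\bigr),
\end{equation*}
and Ahlfors regularity gives $\mu(A_k)\le\mu(B(z,2^{k+1}R))\le C(2^{k+1}R)^{d_h}$.

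Writing $\theta=(R^{d_w}/s)^{1/(d_w-1)}$, so that $\theta\ge1$ because $R\ge s^{1/d_w}$, the $k$-th term is bounded by
\begin{equation*}
C\,2^{kd_h}\,\theta^{\frac{d_h(d_w-1)}{d_w}}\exp\bigl(-c\,2^{k\frac{d_w}{d_w-1}}\theta\bigr).
\end{equation*}
Here we used $(R^{d_w}/s)^{d_h/d_w}=\theta^{d_h(d_w-1)/d_w}$.

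Next we split the exponent in half. Since $2^{kd_w/(d_w-1)}\ge1+k\,c'$ for some $c'>0$ and $\theta\ge1$, we have
\begin{equation*}
\exp\bigl(-\tfrac c2\,2^{kd_w/(d_w-1)}\theta\bigr)\le e^{-\frac c2\theta}\,e^{-\frac{cc'}2 k}.
\end{equation*}
This is the one point where the restriction $R\ge s^{1/d_w}$ is genuinely used: it makes the geometric series uniformly summable. The other half of the exponent absorbs the polynomial factors, via
\begin{equation*}
2^{kd_h}\theta^{\frac{d_h(d_w-1)}{d_w}}e^{-\frac c2 2^{kd_w/(d_w-1)}\theta}\le C.
\end{equation*}
This holds uniformly in $k\ge0$ and $\theta\ge1$ because $x^a e^{-bx}$ is bounded for $x\ge1$, applied with $x=2^{kd_w/(d_w-1)}\theta$. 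Indeed, $2^{kd_h}\theta^{d_h(d_w-1)/d_w}\le x^{d_h}$ since $\theta\ge1$ and $(d_w-1)/d_w\le1$.

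Summing over $k$ gives $C e^{-\frac c2\theta}\sum_k e^{-\frac{cc'}2k}\le Ce^{-c''\theta}$, which is~\eqref{eq:oneParticleTail}. There is no real obstacle: the only care needed is to keep the constants independent of $z$, $s$ and $R$. This holds because Ahlfors regularity and~\eqref{eq:subGauss} are uniform in these parameters.
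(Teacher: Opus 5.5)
Your proof is correct and follows the paper's argument essentially step for step: the same dyadic annuli around $z$, the upper sub-Gaussian bound combined with Ahlfors regularity on each annulus, and absorption of the polynomial prefactor $\theta^{d_h(d_w-1)/d_w}$ into the exponential using $\theta\ge1$. One small correction to your commentary: $\theta\ge1$ is used in two places, not one, since you also need it for $\theta^{d_h(d_w-1)/d_w}\le\theta^{d_h}$. The restriction is harmless in any case, because for $\theta<1$ the bound holds trivially once $C\ge e^{c}$.
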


\begin{proof}
Set $\lambda=(R^{d_w}/s)^{1/(d_w-1)}\ge1$ and $\gamma=d_w/(d_w-1)$. Decomposing $\{d(\cdot,z)\ge R\}$ into the annuli $A_n=B(z,2^{n+1}R)\setminus B(z,2^nR)$, $n\ge0$, and using the upper bound in~\eqref{eq:subGauss} together with the Ahlfors regularity, we get
\begin{align*}
\PP_z\bigl(d(B_s,z)\ge R\bigr)&\le\sum_{n\ge0}Cs^{-d_h/d_w}e^{-c2^{n\gamma}\lambda}\mu\bigl(B(z,2^{n+1}R)\bigr)
\\
&\le C\Bigl(\frac{R^{d_w}}{s}\Bigr)^{d_h/d_w}\sum_{n\ge0}2^{nd_h}e^{-c2^{n}\lambda}\,.
\end{align*}
For $\lambda\ge1$, the series is bounded by $Ce^{-c\lambda}$. Since $(R^{d_w}/s)^{d_h/d_w}=\lambda^{d_h(d_w-1)/d_w}$, the polynomial factor can be absorbed by decreasing $c$. This proves~\eqref{eq:oneParticleTail}.

\end{proof}

For a ball $B\subset X$, let $\tau_B=\inf\{t>0:B_t\notin B\}$ be its exit time. The following lower bound on the probability of staying in a ball is the key ingredient for the moment lower bounds of Subsection~\ref{sec:lower}.

\begin{lem}\label{lem:ball-exit-time-LB}
There are constants $\delta_0\in(0,1)$ and $C>0$, depending only on the geometric data, such that, for all $x\in X$, $r>0$ and $t>0$,
\begin{equation}\label{eq:exit-LB}
\PP_x\bigl(\tau_{B(x,r)}>t\bigr)\ge\delta_0\exp\Bigl(-C\frac{t}{r^{d_w}}\Bigr)\,.
\end{equation}
\end{lem}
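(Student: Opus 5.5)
The plan is a Markov-chaining argument. It rests on a uniform short-time lower bound for staying in a smaller ball, iterated over about $t/r^{d_w}$ time steps.

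\emph{Step 1 (one-step estimate).} We show there are $\epsilon\in(0,1)$ and $\delta_1>0$ such that for every $y\in X$ and $r>0$,
\begin{equation}
\PP_y\bigl(\tau_{B(y,r/2)}>\epsilon r^{d_w},\ B_{\epsilon r^{d_w}}\in B(y,r/4)\bigr)\ge\delta_1 .
\end{equation}
Let $s=\epsilon r^{d_w}$. We split the event into two pieces. First, the probability of exiting $B(y,r/2)$ before time $s$ is bounded using the classical maximal inequality for diffusions with the tail bound of Lemma~\ref{lem:tail-exit-UB}. By the strong Markov property at $\tau=\tau_{B(y,r/2)}$,
\begin{equation}
\PP_y\bigl(\tau\le s\bigr)\le \PP_y\bigl(d(B_s,y)\ge r/4\bigr)+\sup_{u\le s,\ d(z,y)= r/2}\PP_z\bigl(d(B_{s-u},z)\ge r/4\bigr).
\end{equation}
The first term is bounded directly by Lemma~\ref{lem:tail-exit-UB}. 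In the second term, $d(B_{s-u},y)<r/4$ forces $d(B_{s-u},z)\ge r/4$, which gives the bound above. Since $(r/4)^{d_w}/s=4^{-d_w}/\epsilon$ is large, both terms are at most $2C\exp(-c(4^{-d_w}\epsilon^{-1})^{1/(d_w-1)})$, and this is as small as we like once $\epsilon$ is small. Second, by the lower bound in~\eqref{eq:subGauss} and Ahlfors regularity,
\begin{equation}
\PP_y\bigl(B_s\in B(y,r/4)\bigr)\ge c\,s^{-d_h/d_w}e^{-C(4^{-d_w}/\epsilon)^{1/(d_w-1)}}\,\mu(B(y,r/4)).
\end{equation}
This is bounded below by $c'\epsilon^{-d_h/d_w}e^{-C'\epsilon^{-1/(d_w-1)}}=:a(\epsilon)>0$, independently of $r$ and $y$.

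\emph{Step 2 (the main obstacle: making the estimate self-improving).} The issue is that $a(\epsilon)$ decays as $\epsilon\to0$ at roughly the same rate as the exit bound, so the two pieces cannot simply be subtracted. To get around this, we center at each step the smaller ball $B(z,r/4)$ around the current point $z\in B(x,r/4)$, so that $B(z,r/2)\subset B(x,3r/4)\subset B(x,r)$. For the one-step event I would use instead $\{\tau_{B(z,r/4)}>s\}$ together with the endpoint condition $B_s\in B(x,r/4)$. The lower bound I would try first goes through the Dirichlet heat kernel: the near-diagonal lower estimate $p^{B}_s(z,w)\ge c\,s^{-d_h/d_w}$ for $z,w$ in a ball of radius comparable to $s^{1/d_w}$ inside $B$. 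This is a standard consequence of the parabolic Harnack inequality, which follows from~\eqref{eq:subGauss} by \cite{BarlowGrigoryanKumagai2012, GrigoryanTelcs2012}. One can also derive it by hand: write $p^B_s=p_s-\EE[\,p_{s-\tau}(B_\tau,w);\tau<s]$ and bound the correction with the upper bound in~\eqref{eq:subGauss} on scale $\eta r$. I would then choose a fixed small $\eta$ and work at scale $\rho=\eta r$: take $s=\rho^{d_w}$, require the walk to stay in balls of radius $\rho$, and require the endpoint to lie within $\rho/2$ of $x$. Chaining gives
\begin{equation}
\inf_{z\in B(x,\rho/2)}\PP_z\bigl(\tau_{B(z,\rho)}>s,\ B_s\in B(x,\rho/2)\bigr)\ge\delta_1>0 ,
\end{equation}
and this probability is bounded by the Dirichlet kernel integral over $B(x,\rho/2)\cap B(z,\rho)$. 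That region has measure at least $c\rho^{d_h}$ for $\rho$ small, and the kernel there is at least $c\,s^{-d_h/d_w}$. We have $B(z,\rho)\subset B(x,r)$.

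\emph{Step 3 (iteration).} Let $n=\lceil t/s\rceil$. Applying the Markov property at times $ks$, $k=1,\dots,n$, and using the uniform bound from Step 2 at each step, we get
\begin{equation}
\PP_x\bigl(\tau_{B(x,r)}>t\bigr)\ge\delta_1^{\,n}\ge\delta_1\exp\bigl(-\log(1/\delta_1)\,\eta^{-d_w}t/r^{d_w}\bigr).
\end{equation}
This is~\eqref{eq:exit-LB} with $\delta_0=\delta_1$ and $C=\eta^{-d_w}\log(1/\delta_1)$; the case $t<s$ is covered by a single step.
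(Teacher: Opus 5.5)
Your Steps 2--3 take the same route as the paper. You use a near-diagonal lower bound for the killed heat kernel, which comes from the parabolic Harnack inequality \cite{BarlowGrigoryanKumagai2012}, integrate it over a small ball around $x$, and chain with the Markov property at times $ks$. The gap is in the choice of scales in Step~2. You put the time $s=\rho^{d_w}$, the killing ball $B(z,\rho)$ and the target $B(x,\rho/2)$ all on the same scale. The near-diagonal estimate only applies when two conditions hold: the time is small compared with the killing radius, $s\le(\varepsilon R)^{d_w}$, and both points lie within $\varepsilon s^{1/d_w}$ of the centre. Neither holds here. Moreover, $B(x,\rho/2)\subset B(z,\rho)$ reaches $\partial B(z,\rho)$ when $d(x,z)$ is close to $\rho/2$. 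Near that boundary the killed kernel degenerates (in $\R^d$ it vanishes there). So your claim that $p^{B(z,\rho)}_s(z,\cdot)\ge c\,s^{-d_h/d_w}$ on the whole integration region is false. Your by-hand route through $p^B_s(z,w)=p_s(z,w)-\EE_z[p_{s-\tau}(B_\tau,w);\tau<s]$ fails for the same reason: $d(B_\tau,w)$ can be arbitrarily small, so the correction term is not small. The displayed one-step inequality is in fact true, but at these scales it needs a further chaining argument that you do not give.

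The fix is to separate the three scales, as the paper does. Keep the fixed ball $B(x,r)$ as the killing set; no recentring is needed. Take time $s=(\varepsilon r)^{d_w}$ and start and target set $B(x,\varepsilon^2 r)$. Then \cite[Theorem~3.1 and (3.10)]{BarlowGrigoryanKumagai2012} gives $p^{B(x,r)}_s(y,z)\ge c(\varepsilon r)^{-d_h}$ for $y,z\in B(x,\varepsilon^2 r)$. Integrating gives a uniform $\delta_0$, and your Step~3 goes through unchanged.

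With this separation your Dynkin-formula derivation also works. For $w\in B(x,\varepsilon^2 r)$ you have $d(B_\tau,w)\ge r/2$. For small $\varepsilon$, the map $u\mapsto u^{-d_h/d_w}\exp(-c((r/2)^{d_w}/u)^{1/(d_w-1)})$ is increasing on $(0,s]$. So the correction is at most $Cs^{-d_h/d_w}\exp(-c\,\varepsilon^{-d_w/(d_w-1)})$, which is negligible against $p_s(z,w)\ge c\,s^{-d_h/d_w}$.

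Finally, the obstruction you identified in Step~1 is not the real one. Lemma~\ref{lem:tail-exit-UB} gives $\PP_y(B_s\in B(y,r/4))\to1$ as $\epsilon\to0$, so the subtraction does work there. What fails is chaining with a target centred at the moving point. Instead, take a fixed target $B(x,\rho)$ with $\rho=s^{1/d_w}$. The global heat kernel lower bound then gives a landing probability of at least some $a_0>0$, independent of $r/\rho$. Your maximal inequality bounds the probability of leaving $B(z,r/2)\subset B(x,r)$ by $a_0/2$ once $r/\rho$ is large. This yields the one-step bound with no killed heat kernel at all. It is a more self-contained alternative to citing \cite{BarlowGrigoryanKumagai2012}.
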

\begin{proof}
Fix $x\in X$ and $r>0$, and write $B=B(x,r)$ and $\tau=\tau_B$. Ahlfors regularity gives volume doubling and, together with completeness, implies that all balls are relatively compact. By~\cite[Theorem 3.1 and (3.10)]{BarlowGrigoryanKumagai2012}, applied with the rate function $s\mapsto s^{d_w}$, the estimates~\eqref{eq:subGauss} imply that there exist $\eps\in(0,1)$ and $c>0$, depending only on the structural constants, such that the killed heat kernel satisfies
\begin{equation*}
p_{(\eps r)^{d_w}}^B(y,z)\ge c(\eps r)^{-d_h},
\qquad y,z\in B(x,\eps^2r)\,.
\end{equation*}
The continuity assertion in that theorem extends the bound from almost every pair $(y,z)$ to every pair. Integrating over $B(x,\eps^2r)$ and using Ahlfors regularity gives a constant $\delta_0\in(0,1)$, independent of $x$ and $r$, such that
\begin{equation}\label{eq:one-step}
\inf_{y\in B(x,\eps^2r)}
\PP_y\bigl(\tau>(\eps r)^{d_w},\
B_{(\eps r)^{d_w}}\in B(x,\eps^2r)\bigr)
\ge\delta_0\,.
\end{equation}

Since $x\in B(x,\eps^2r)$, iterating the Markov property at the times $k(\eps r)^{d_w}$ and using~\eqref{eq:one-step} yields, for every integer $n\ge1$,
\begin{equation*}
\PP_x\bigl(\tau>n(\eps r)^{d_w},\
B_{n(\eps r)^{d_w}}\in B(x,\eps^2r)\bigr)
\ge\delta_0^n\,.
\end{equation*}
For $t>0$, choose $n=\lceil t/(\eps r)^{d_w}\rceil
\le t/(\eps r)^{d_w}+1$. Then
\begin{align*}
\PP_x(\tau>t)
&\ge\PP_x\bigl(\tau>n(\eps r)^{d_w}\bigr)
\ge\delta_0^n\\
&\ge\delta_0\exp\Bigl(
-\eps^{-d_w}\log(1/\delta_0)\frac{t}{r^{d_w}}
\Bigr)\,,
\end{align*}
This proves~\eqref{eq:exit-LB}.
\end{proof}

\subsection{Negative powers of the Laplacian and Riesz kernels}\label{sec:Preliminaries}

We construct the negative powers of $-\Lap$, identify their Riesz kernels, and prove the kernel bounds and convolution identity needed to define the covariance of the noise we are interested in. Fractional Gaussian fields defined through powers of a fractal Laplacian were studied in~\cite{BaudoinLacaux2022,BaudoinChen2023}, but our non-compact framework introduces some new difficulty we need to handle.

\begin{defn}\label{def:fractional-laplacian}
For $\gamma>0$, the operator $(-\Lap)^{-\gamma}$ is defined by the spectral theorem: if $-\Lap=\int_{[0,\infty)}\lambda\,dE_\lambda$, then
\begin{equation*}
(-\Lap)^{-\gamma}=\int_{(0,\infty)}\lambda^{-\gamma}\,dE_\lambda,\qquad
\cD\bigl((-\Lap)^{-\gamma}\bigr)=\Bigl\{f\in L^2(X,\mu):\int_{(0,\infty)}\lambda^{-2\gamma}\,d\langle E_\lambda f,f\rangle<\infty\Bigr\}\,.
\end{equation*}
\end{defn}

Here $E_{\{0\}}=0$, so that the spectral integral may be restricted to $(0,\infty)$: by Minkowski's inequality and~\eqref{eq:heatkernel-square-bound}, $\|P_tf\|_{L^2}\le Ct^{-d_h/(2d_w)}\|f\|_{L^1}\to0$ for $f\in L^1\cap L^2$, hence $P_tf\to0$ in $L^2$ for all $f\in L^2$ by density, and $E_{\{0\}}$ is the strong limit of $P_t$ as $t\to\infty$. In particular $\ker\Lap=\{0\}$; note that, $X$ being unbounded and $\mu$ Ahlfors regular, no nonzero constant belongs to $L^2$. For  $\gamma=0$ we use the convention $(-\Lap)^0=I$ on $L^2(X,\mu)$.

Using $P_t=\int_{(0,\infty)}e^{-t\lambda}\,dE_\lambda$ and the integral $\lambda^{-\gamma}=\frac1{\Gamma(\gamma)}\int_0^\infty s^{\gamma-1}e^{-s\lambda}\,ds$, the spectral theorem gives the following integral representation of $(-\Lap)^{-\gamma}$.

\begin{lem}\label{lem:fraclap-integral}
Let $\gamma>0$ and $f\in L^2(X,\mu)$ satisfy $\int_0^\infty s^{\gamma-1}\|P_sf\|_{L^2}\,ds<\infty$. Then $f\in\cD((-\Lap)^{-\gamma})$ and
\begin{equation}\label{eq:fraclap-integral}
(-\Lap)^{-\gamma}f=\frac1{\Gamma(\gamma)}\int_0^\infty s^{\gamma-1}P_sf\,ds.
\end{equation}
\end{lem}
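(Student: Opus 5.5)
The plan is to apply the spectral theorem through the measures $\nu_f(d\lambda)=d\langle E_\lambda f,f\rangle$ and use Fubini. We also use the fact recorded before the lemma, $E_{\{0\}}=0$, so $\nu_f$ lives on $(0,\infty)$. The key scalar identity is $\lambda^{-\gamma}=\Gamma(\gamma)^{-1}\int_0^\infty s^{\gamma-1}e^{-s\lambda}\,ds$ for $\lambda>0$.

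\emph{Step 1: the Bochner integral exists.} The map $s\mapsto P_sf$ is strongly continuous in $L^2$. Together with the hypothesis $\int_0^\infty s^{\gamma-1}\|P_sf\|_{L^2}\,ds<\infty$, this makes
\[
g:=\frac1{\Gamma(\gamma)}\int_0^\infty s^{\gamma-1}P_sf\,ds
\]
a well-defined Bochner integral in $L^2(X,\mu)$.

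\emph{Step 2: $f\in\cD((-\Lap)^{-\gamma})$.} For $0<\varepsilon<M$, put $g_{\varepsilon,M}=\Gamma(\gamma)^{-1}\int_\varepsilon^M s^{\gamma-1}P_sf\,ds$. By the functional calculus, $g_{\varepsilon,M}=\phi_{\varepsilon,M}(-\Lap)f$, where
\[
\phi_{\varepsilon,M}(\lambda)=\Gamma(\gamma)^{-1}\int_\varepsilon^M s^{\gamma-1}e^{-s\lambda}\,ds .
\]
This holds because bounded Borel functional calculus commutes with Bochner integration: test against any $h\in L^2$ and apply Fubini to the finite complex measure $d\langle E_\lambda f,h\rangle$.

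Then
\[
\|g_{\varepsilon,M}\|_{L^2}^2=\int_{(0,\infty)}\phi_{\varepsilon,M}(\lambda)^2\,\nu_f(d\lambda).
\]
As $\varepsilon\downarrow0$ and $M\uparrow\infty$, $\phi_{\varepsilon,M}(\lambda)$ increases to $\lambda^{-\gamma}$ for every $\lambda>0$. At the same time $g_{\varepsilon,M}\to g$ in $L^2$ by dominated convergence for Bochner integrals. Monotone convergence then gives
\[
\int_{(0,\infty)}\lambda^{-2\gamma}\,\nu_f(d\lambda)=\|g\|_{L^2}^2<\infty,
\]
which is exactly the domain condition in Definition~\ref{def:fractional-laplacian}.

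\emph{Step 3: the identity.} The functions $\phi_{\varepsilon,M}$ converge to $\lambda^{-\gamma}$ in $L^2(\nu_f)$, by dominated convergence with dominating function $\lambda^{-2\gamma}$, which is $\nu_f$-integrable by Step 2. The spectral isometry $\|\phi(-\Lap)f\|^2=\int|\phi|^2\,d\nu_f$ therefore gives
\[
\phi_{\varepsilon,M}(-\Lap)f\to(-\Lap)^{-\gamma}f \quad\text{in } L^2.
\]
Combined with $g_{\varepsilon,M}\to g$, this shows $(-\Lap)^{-\gamma}f=g$, which is \eqref{eq:fraclap-integral}.

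The only delicate point is Step 2: one must establish finiteness of $\int\lambda^{-2\gamma}\,d\nu_f$ before the unbounded operator $(-\Lap)^{-\gamma}$ is applied to $f$. This is why the argument truncates to $[\varepsilon,M]$, where everything is bounded, and passes to the limit by monotone convergence.
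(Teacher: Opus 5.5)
Your proposal is correct and takes the same route as the paper. The paper states the lemma as a direct consequence of the spectral theorem applied to $P_s=\int_{(0,\infty)}e^{-s\lambda}\,dE_\lambda$, together with the Gamma-function identity for $\lambda^{-\gamma}$, and your truncation and monotone-convergence argument is a careful rendering of exactly that sketch; the lower cutoff $\varepsilon$ is unnecessary, since $s^{\gamma-1}$ is integrable at $0$ for $\gamma>0$, but it is harmless.
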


The following domain inclusion and smoothing estimate will be used to identify the Riesz kernels with the spectral powers.

\begin{lem}\label{lem:Schwarz-properties}
Let $0<\gamma<\frac{d_h}{2d_w}$. Then $L^1(X,\mu)\cap L^2(X,\mu)\subset\cD((-\Lap)^{-\gamma})$. Moreover, for every $f\in L^1(X,\mu)\cap L^2(X,\mu)$ and $t>0$,
\begin{equation*}
\norm{(-\Lap)^{-\gamma}P_tf}_{L^2}\le C\,t^{\gamma-\frac{d_h}{2d_w}}\norm{f}_{L^1}\,.
\end{equation*}
\end{lem}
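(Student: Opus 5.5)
The plan is to reduce both claims to Lemma~\ref{lem:fraclap-integral} by bounding $\|P_sg\|_{L^2}$ in two regimes: small $s$ and large $s$.

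\emph{Domain inclusion.} Fix $f\in L^1\cap L^2$. For small $s$ I use contraction, $\|P_sf\|_{L^2}\le\|f\|_{L^2}$. For large $s$ I use the estimate recorded after Definition~\ref{def:fractional-laplacian}, which follows from Minkowski's integral inequality and~\eqref{eq:heatkernel-square-bound}:
\begin{equation*}
\|P_sf\|_{L^2}\le\int_X|f(y)|\,\|p_s(\cdot,y)\|_{L^2}\,d\mu(y)\le Cs^{-d_h/(2d_w)}\|f\|_{L^1}.
\end{equation*}
Splitting $\int_0^\infty s^{\gamma-1}\|P_sf\|_{L^2}\,ds$ at $s=1$ gives two pieces:
\begin{equation*}
\int_0^1 s^{\gamma-1}\|f\|_{L^2}\,ds<\infty \quad\text{since }\gamma>0,
\qquad
\int_1^\infty s^{\gamma-1-d_h/(2d_w)}\|f\|_{L^1}\,ds<\infty \quad\text{since }\gamma<\tfrac{d_h}{2d_w}.
\end{equation*}
Lemma~\ref{lem:fraclap-integral} then yields $f\in\cD((-\Lap)^{-\gamma})$ together with the integral representation.

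\emph{Smoothing estimate.} Apply the same reasoning to $g=P_tf$, which again lies in $L^1\cap L^2$, since $P_t$ is an $L^1$ contraction by symmetry and $\int_X p_t(x,y)\,d\mu(x)=1$. By Lemma~\ref{lem:fraclap-integral},
\begin{equation*}
\|(-\Lap)^{-\gamma}P_tf\|_{L^2}\le\frac1{\Gamma(\gamma)}\int_0^\infty s^{\gamma-1}\|P_{s+t}f\|_{L^2}\,ds\le C\|f\|_{L^1}\int_0^\infty s^{\gamma-1}(s+t)^{-d_h/(2d_w)}\,ds,
\end{equation*}
where the second step uses the $L^1\to L^2$ bound above with time $s+t$. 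Substituting $s=tu$ turns the last integral into
\begin{equation*}
t^{\gamma-d_h/(2d_w)}\int_0^\infty u^{\gamma-1}(1+u)^{-d_h/(2d_w)}\,du .
\end{equation*}
The $u$-integral is a Beta integral, finite exactly when $0<\gamma<d_h/(2d_w)$. This gives the stated bound.

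\emph{Main difficulty.} The only delicate point is justifying the use of Lemma~\ref{lem:fraclap-integral}, that is, checking absolute convergence of the Bochner integral in $L^2$. That convergence is exactly what the two-regime estimate establishes, so I do not expect a genuine obstacle.
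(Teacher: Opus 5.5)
Your proposal is correct and is essentially the paper's own proof: you split at $s=1$ using $L^2$ contractivity and the $L^1\to L^2$ bound $\|P_sf\|_{L^2}\le Cs^{-d_h/(2d_w)}\|f\|_{L^1}$, invoke Lemma~\ref{lem:fraclap-integral}, and then apply the same bound at time $s+t$ and evaluate the resulting Beta-type integral by scaling. One small remark: the check that $P_tf\in L^1$ is harmless but unnecessary, because the bound $\|P_{s+t}f\|_{L^2}\le C(s+t)^{-d_h/(2d_w)}\|f\|_{L^1}$ already gives the integrability that Lemma~\ref{lem:fraclap-integral} requires for $P_tf$.
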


\begin{proof}
By~\eqref{eq:heatkernel-square-bound} and Minkowski's inequality, $\|P_sf\|_{L^2}\le Cs^{-d_h/(2d_w)}\|f\|_{L^1}$. Together with $L^2$ contractivity, this yields
\begin{equation*}
\int_0^\infty s^{\gamma-1}\|P_sf\|_{L^2}\,ds
\le\frac{\|f\|_{L^2}}{\gamma}
+C\|f\|_{L^1}\int_1^\infty s^{\gamma-1-\frac{d_h}{2d_w}}\,ds<\infty\,.
\end{equation*}
Lemma~\ref{lem:fraclap-integral} proves the domain assertion. Applying the same estimate to $P_tf$ gives
\begin{equation*}
\|(-\Lap)^{-\gamma}P_tf\|_{L^2}
\le\frac{C\|f\|_{L^1}}{\Gamma(\gamma)}
\int_0^\infty s^{\gamma-1}(s+t)^{-\frac{d_h}{2d_w}}\,ds
=C_\gamma\|f\|_{L^1}t^{\gamma-\frac{d_h}{2d_w}}\,.
\end{equation*}
\end{proof}

\begin{defn}\label{def:fractional-laplacian-kernel}
For $\gamma\in(0,\frac{d_h}{d_w})$ and $x\ne y$, we set
\begin{equation}\label{eq:RieszKernel}
G_\gamma(x,y)=\frac{1}{\Gamma(\gamma)}\int_0^\infty t^{\gamma-1}p_t(x,y)\,dt\,.
\end{equation}
\end{defn}

The time integral in~\eqref{eq:RieszKernel} converges at $t=\infty$ because of the on-diagonal decay $t^{-d_h/d_w}$ and $\gamma<d_h/d_w$, and at $t=0$ because of the off-diagonal Gaussian factor.

\begin{lem}\label{lem:fractional-laplacian-kernel-existence-and-bound}
Let $\gamma\in(0,\frac{d_h}{d_w})$. There exist constants $c,C>0$ such that for all $x\ne y$,
\begin{equation}\label{eq:Riesz-bounds}
c\,d(x,y)^{\gamma d_w-d_h}\le G_\gamma(x,y)\le C\,d(x,y)^{\gamma d_w-d_h}\,.
\end{equation}
In particular, $G_\gamma$ is symmetric, continuous and finite off the diagonal, and $G_\gamma(x,y)\to\infty$ as $y\to x$.
\end{lem}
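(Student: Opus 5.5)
The plan is to insert the two-sided bound \eqref{eq:subGauss} directly into the time integral \eqref{eq:RieszKernel} and rescale. Fix $x\ne y$ and write $r=d(x,y)>0$. Substitute $t=r^{d_w}s$, so that $dt=r^{d_w}\,ds$ and $r^{d_w}/t=1/s$. The upper bound in \eqref{eq:subGauss} then gives
\begin{equation*}
G_\gamma(x,y)\le\frac{C}{\Gamma(\gamma)}\,r^{\gamma d_w-d_h}\int_0^\infty s^{\gamma-1-\frac{d_h}{d_w}}\exp\bigl(-c\,s^{-\frac1{d_w-1}}\bigr)\,ds.
\end{equation*}
The same substitution in the lower bound yields the analogous inequality in the other direction, with constants $c,C$ exchanged in the exponent.

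It then suffices to check that both $s$-integrals are finite and positive. Positivity is immediate because the integrand is strictly positive. For finiteness, look at the two ends separately. At $s\to\infty$ the exponential tends to $1$, and the integrand behaves like $s^{\gamma-1-d_h/d_w}$. This is integrable at infinity precisely because $\gamma<d_h/d_w$. At $s\to0$ the factor $\exp(-c\,s^{-1/(d_w-1)})$ decays faster than any power, so it kills the polynomial singularity. This yields \eqref{eq:Riesz-bounds}, with constants depending only on $\gamma$ and the structural data.

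For the remaining assertions:
\begin{itemize}
\item \emph{Symmetry.} This follows from the symmetry $p_t(x,y)=p_t(y,x)$, since $\Lap$ is self-adjoint.
\item \emph{Finiteness off the diagonal.} This is the upper bound just proved.
\item \emph{Blow-up on the diagonal.} This follows from the lower bound, since $\gamma d_w-d_h<0$.
\item \emph{Continuity off the diagonal.} Fix $(x_0,y_0)$ with $d(x_0,y_0)=2\rho>0$. For $(x,y)$ in a neighbourhood where $d(x,y)\ge\rho$, the integrand $t^{\gamma-1}p_t(x,y)$ is continuous in $(x,y)$ by the joint continuity of the heat kernel. It is dominated by $C t^{\gamma-1-d_h/d_w}\exp(-c(\rho^{d_w}/t)^{1/(d_w-1)})$, which is integrable on $(0,\infty)$ by the same two-end computation. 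Dominated convergence then gives continuity.
\end{itemize}

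The only mildly delicate step is this uniform domination near the diagonal-free region, and it is handled by the monotonicity of the upper bound in $d(x,y)$.
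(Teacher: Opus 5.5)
Your proposal is correct and takes the same approach as the paper: insert the two-sided sub-Gaussian bounds into the time integral defining $G_\gamma$, and get continuity off the diagonal by dominated convergence using the upper heat kernel bound. Your substitution $t=d(x,y)^{d_w}s$ spells out the computation that the paper leaves to the cited references.
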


\begin{proof}
The proof is similar to  ~\cite[Proposition~2.6]{BaudoinLacaux2022} and~\cite[Lemma~2.3]{BOTW23} and is obtained by integrating the heat kernel bounds.  The continuity off the diagonal follows from the continuity of $p_t$ and dominated convergence, using the upper bound in~\eqref{eq:subGauss}.
\end{proof}

\begin{lem}\label{lem:fractional-laplacian-kernel-is-a-kernel}
Let $\gamma\in(0,\frac{d_h}{2d_w})$. For every $f\in L^1(X,\mu)\cap L^\infty(X,\mu)$, the function $y\mapsto G_\gamma(x,y)f(y)$ is $\mu$-integrable for every $x\in X$, uniformly in $x$, and
\begin{equation}\label{eq:kernel-rep}
(-\Lap)^{-\gamma}f(x)=\int_XG_\gamma(x,y)f(y)\,d\mu(y)\qquad\text{for $\mu$-a.e. }x\in X\,.
\end{equation}
\end{lem}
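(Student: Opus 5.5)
The proof has two parts. First, a uniform integrability estimate from the kernel bound of Lemma~\ref{lem:fractional-laplacian-kernel-existence-and-bound}. Second, identifying the kernel integral with the spectral power via Lemma~\ref{lem:fraclap-integral} and Fubini.

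\textbf{Integrability.} Fix $x$ and split $X$ into $B(x,1)$ and its complement. On $B(x,1)$, the upper bound in~\eqref{eq:Riesz-bounds} and a dyadic annuli decomposition with Ahlfors regularity give
\begin{equation*}
\int_{B(x,1)}d(x,y)^{\gamma d_w-d_h}\,d\mu(y)\le C\sum_{n\ge0}2^{-n(\gamma d_w-d_h)}2^{-nd_h}=C\sum_{n\ge0}2^{-n\gamma d_w}<\infty\,,
\end{equation*}
so this part is bounded by $C\|f\|_{L^\infty}$. On the complement, $G_\gamma(x,y)\le C$, so this part is bounded by $C\|f\|_{L^1}$. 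Both constants are independent of $x$. This gives the uniform integrability claim.

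\textbf{Identification.} By Lemma~\ref{lem:Schwarz-properties}, $f\in L^1\cap L^2\subset\cD((-\Lap)^{-\gamma})$. Its proof also shows $\int_0^\infty s^{\gamma-1}\|P_sf\|_{L^2}\,ds<\infty$, so~\eqref{eq:fraclap-integral} holds as an $L^2$-valued Bochner integral. I pair it with a test function $g\in L^1\cap L^\infty$ (for instance indicators of bounded sets, which span a dense subset of $L^2$). This gives
\begin{equation*}
\langle(-\Lap)^{-\gamma}f,g\rangle=\frac1{\Gamma(\gamma)}\int_0^\infty s^{\gamma-1}\int_X\int_Xp_s(x,y)f(y)g(x)\,d\mu(y)\,d\mu(x)\,ds\,.
\end{equation*}
To apply Fubini--Tonelli I need absolute convergence. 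Replacing $f,g$ by $|f|,|g|$ and integrating first in $s$ turns the triple integral into
\begin{equation*}
\int\!\!\int G_\gamma(x,y)|f(y)||g(x)|\,d\mu\,d\mu\le \sup_x\int G_\gamma(x,y)|f(y)|\,d\mu(y)\cdot\|g\|_{L^1}<\infty\,,
\end{equation*}
using the first step. Fubini then yields $\langle(-\Lap)^{-\gamma}f,g\rangle=\langle\int G_\gamma(\cdot,y)f(y)\,d\mu(y),g\rangle$. The function $F(x)=\int G_\gamma(x,y)f(y)\,d\mu(y)$ is bounded. It also lies in $L^2$: the split above shows $F\le C(|f|*_{\text{loc}}\text{kernel}) + \dots$, or more simply, $L^2$ membership is not needed if we test against all $g\in L^1\cap L^\infty$. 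Since both sides are locally integrable and agree against all indicators of bounded Borel sets, they agree $\mu$-a.e.

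\textbf{Main obstacle.} The delicate point is justifying the interchange. The Bochner integral in~\eqref{eq:fraclap-integral} must be paired correctly, and the absolute triple integral must be finite. The latter is exactly where the restriction $\gamma<d_h/(2d_w)$ (via Lemma~\ref{lem:Schwarz-properties}) and the integrability estimate enter. Once Tonelli applies with $|f|,|g|$, the rest is routine.
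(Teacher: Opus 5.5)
Your proof is correct, but both halves take a different route from the paper's.

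\textbf{Uniform bound.} The paper never uses the Riesz kernel estimate here. It writes $\int_X G_\gamma(x,y)|f(y)|\,d\mu(y)=\Gamma(\gamma)^{-1}\int_0^\infty t^{\gamma-1}P_t|f|(x)\,dt$ by Tonelli and splits the \emph{time} integral at $t=1$. For $t\le 1$ it uses $P_t|f|\le\|f\|_{L^\infty}$, and for $t\ge1$ it uses $P_t|f|(x)\le Ct^{-d_h/d_w}\|f\|_{L^1}$. You instead split \emph{space} at distance $1$, using the upper bound in \eqref{eq:Riesz-bounds} with dyadic annuli. The paper's version needs only the on-diagonal heat kernel bound and contractivity, not Lemma~\ref{lem:fractional-laplacian-kernel-existence-and-bound}, whose proof is only sketched. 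It also yields, via Fubini, the pointwise identity $\int_XG_\gamma(x,y)f(y)\,d\mu(y)=\Gamma(\gamma)^{-1}\int_0^\infty t^{\gamma-1}P_tf(x)\,dt$ for every $x$. Your version is the classical bounded-Riesz-potential argument and is equally valid.

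\textbf{Identification.} The paper compares the $L^2$-limit of the truncated integrals $\Gamma(\gamma)^{-1}\int_0^R t^{\gamma-1}P_tf\,dt$ with their pointwise limit along an a.e.\ convergent subsequence. This implicitly identifies the $L^2$-valued Bochner integral with the pointwise time integral. Your duality argument avoids that step cleanly: you pair with $g\in L^1\cap L^\infty$, apply Fubini in $(s,x,y)$, and conclude a.e.\ equality of two locally integrable functions with equal integrals over every bounded Borel set.

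\textbf{Minor points.}
\begin{enumerate}
\item Delete the garbled sentence attempting to show $F\in L^2$. As you note, it is not needed, and $F\in L^2$ follows a posteriori from the identity.
\item The restriction $\gamma<d_h/(2d_w)$ enters only through Lemma~\ref{lem:Schwarz-properties}, i.e.\ through the domain and Bochner integrability. The absolute convergence of your triple integral holds for all $\gamma<d_h/d_w$.
\item You dropped a harmless factor $\Gamma(\gamma)$ in that Tonelli bound.
\end{enumerate}
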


\begin{proof}
For $f\in L^1\cap L^\infty$ and $0<\gamma<d_h/d_w$, Tonelli's theorem, contractivity and the upper heat kernel bound give uniformly in $x$, by splitting the time integral at $1$:
\begin{equation}\label{eq:G-integrable}
\begin{aligned}
\int_XG_\gamma(x,y)|f(y)|\,d\mu(y)
&=\frac1{\Gamma(\gamma)}\int_0^\infty t^{\gamma-1}P_t|f|(x)\,dt\\
&\le\frac1{\Gamma(\gamma)}\left(
\frac{\|f\|_{L^\infty}}{\gamma}
+\frac{C\|f\|_{L^1}}{d_h/d_w-\gamma}\right).
\end{aligned}
\end{equation}
Fubini's theorem therefore gives, for every $x\in X$,
\begin{equation*}
\int_XG_\gamma(x,y)f(y)\,d\mu(y)=\frac1{\Gamma(\gamma)}\int_0^\infty t^{\gamma-1}P_tf(x)\,dt\,,
\end{equation*}
where the right-hand side converges absolutely. On the other hand, $f\in L^1\cap L^2$, so Lemmas~\ref{lem:Schwarz-properties} and~\ref{lem:fraclap-integral} give $\frac1{\Gamma(\gamma)}\int_0^Rt^{\gamma-1}P_tf\,dt\to(-\Lap)^{-\gamma}f$ in $L^2$ as $R\to\infty$, while it converges pointwise to the right-hand side above. Identifying the limits along a subsequence proves~\eqref{eq:kernel-rep}.
\end{proof}

We will repeatedly use the following covariance identity. For $\alpha>0$, Tonelli's theorem, symmetry and~\eqref{eq:chapman} give, for all $s>0$ and $x,x'\in X$,
\begin{equation}\label{eq:two-point-kernel}
\begin{aligned}
&\int_X\int_Xp_s(x,y)p_s(x',z)G_{2\alpha}(y,z)\,d\mu(y)\,d\mu(z)\\
&\qquad=\frac1{\Gamma(2\alpha)}\int_0^\infty r^{2\alpha-1}p_{r+2s}(x,x')\,dr\\
&\qquad\le\frac{C}{\Gamma(2\alpha)}\int_0^\infty r^{2\alpha-1}(r+2s)^{-\frac{d_h}{d_w}}\,dr
\le Cs^{2\alpha-\frac{d_h}{d_w}}\,.
\end{aligned}
\end{equation}
The last integral is finite because $0<2\alpha<d_h/d_w$. At $\alpha=0$, the corresponding identity is simply
\begin{equation}\label{eq:two-point-kernel-zero}
\int_Xp_s(x,y)p_s(x',y)\,d\mu(y)=p_{2s}(x,x')\le Cs^{-\frac{d_h}{d_w}}\,.
\end{equation}

The following estimate will repeatedly be used.

\begin{lem}\label{lem:sup-kernel-estimates}
Let $\gamma\in[0,\frac{d_h}{2d_w})$. There exists $C>0$ such that for all $t>0$,
\begin{equation}\label{eq:sup-kernel-1}
\sup_{x\in X}\norm{(-\Lap)^{-\gamma}p_t(x,\cdot)}_{L^2}\le Ct^{\gamma-\frac{d_h}{2d_w}}\,.
\end{equation}
\end{lem}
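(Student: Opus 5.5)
The plan is to treat $\gamma=0$ and $\gamma>0$ separately, and in the second case to write $p_t(x,\cdot)=P_{t/2}\bigl(p_{t/2}(x,\cdot)\bigr)$ so that Lemma~\ref{lem:Schwarz-properties} applies directly.

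\emph{Case $\gamma=0$.} Here $(-\Lap)^0=I$, and~\eqref{eq:heatkernel-square-bound} gives
\[
\norm{p_t(x,\cdot)}_{L^2}^2=p_{2t}(x,x)\le Ct^{-d_h/d_w}
\]
uniformly in $x$. Taking square roots gives~\eqref{eq:sup-kernel-1}.

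\emph{Case $0<\gamma<\frac{d_h}{2d_w}$.} Fix $x$ and set $f=p_{t/2}(x,\cdot)$.
\begin{enumerate}
\item \emph{Integrability of $f$.} By stochastic completeness~\eqref{eq:chapman}, $\norm{f}_{L^1}=1$. By~\eqref{eq:heatkernel-square-bound}, $\norm{f}_{L^2}\le Ct^{-d_h/(2d_w)}<\infty$. Hence $f\in L^1\cap L^2$.
\item \emph{Identification.} By Chapman--Kolmogorov and symmetry, $P_{t/2}f=p_t(x,\cdot)$ pointwise, and hence as elements of $L^2$. Interpreting $(-\Lap)^{-\gamma}p_t(x,\cdot)$ in the spectral sense of Definition~\ref{def:fractional-laplacian}, this is an element of the domain.
\item \emph{Estimate.} Lemma~\ref{lem:Schwarz-properties}, applied at time $t/2$, yields
\[
\norm{(-\Lap)^{-\gamma}p_t(x,\cdot)}_{L^2}\le C(t/2)^{\gamma-\frac{d_h}{2d_w}}\norm{f}_{L^1}=C't^{\gamma-\frac{d_h}{2d_w}}.
\]
The constant is independent of $x$, so taking the supremum over $x$ gives~\eqref{eq:sup-kernel-1}.
\end{enumerate}

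As a cross-check, or if one prefers the kernel form, compute the square of the norm directly. By the spectral theorem,
\[
\norm{(-\Lap)^{-\gamma}P_{t/2}f}^2=\langle(-\Lap)^{-2\gamma}P_tf,f\rangle .
\]
Using Lemma~\ref{lem:fraclap-integral} and Chapman--Kolmogorov, this equals
\[
\frac1{\Gamma(2\gamma)}\int_0^\infty r^{2\gamma-1}p_{r+2t}(x,x)\,dr\le Ct^{2\gamma-d_h/d_w},
\]
exactly as in~\eqref{eq:two-point-kernel}. The integral converges because $2\gamma<d_h/d_w$.

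There is no serious obstacle. The only point needing care is justifying that $p_t(x,\cdot)$ lies in the domain of $(-\Lap)^{-\gamma}$, and that the kernel and spectral interpretations agree. Both follow from $f\in L^1\cap L^2$ together with Lemmas~\ref{lem:Schwarz-properties} and~\ref{lem:fraclap-integral}.
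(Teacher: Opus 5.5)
Your proof is correct and is essentially the paper's own argument: the case $\gamma=0$ is \eqref{eq:heatkernel-square-bound}, and for $\gamma>0$ one applies Lemma~\ref{lem:Schwarz-properties} to $f=p_{t/2}(x,\cdot)$ at time $t/2$, using $\norm{f}_{L^1}=1$ and $P_{t/2}f=p_t(x,\cdot)$. One small caveat concerns your optional cross-check: Lemma~\ref{lem:fraclap-integral} with exponent $2\gamma$ applies to $P_tf$ only when $\gamma<\frac{d_h}{4d_w}$, and in fact $P_tf\notin\cD((-\Lap)^{-2\gamma})$ otherwise, so over the full range that computation should instead be justified by Tonelli's theorem on the spectral measure of $p_t(x,\cdot)$, or by \eqref{eq:sobolev-IP-S} together with \eqref{eq:two-point-kernel}.
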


\begin{proof}
For $\gamma=0$, estimate~\eqref{eq:sup-kernel-1} is \eqref{eq:heatkernel-square-bound}. For $\gamma>0$, apply Lemma~\ref{lem:Schwarz-properties} with $f=p_{t/2}(x,\cdot)$ and time $t/2$. Since $\norm{f}_{L^1}=1$ and $P_{t/2}f=p_t(x,\cdot)$, this gives~\eqref{eq:sup-kernel-1}.
\end{proof}

The next identity  is the composition law for negative powers of the Laplacian at the kernel level; see  ~\cite[Remark~3.4]{BaudoinLacaux2022}  for the compact Sierpinski gasket case.

\begin{lem}\label{lem:Kernel-2alpha-correspondence}
Let $\gamma\in(0,\frac{d_h}{2d_w})$. For all $x,y\in X$,
\begin{equation*}
G_{2\gamma}(x,y)=\int_XG_\gamma(x,z)G_\gamma(z,y)\,d\mu(z)
\end{equation*}
(both sides being $+\infty$ when $x=y$).
\end{lem}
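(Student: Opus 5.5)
The plan is to work directly with the time-integral definition~\eqref{eq:RieszKernel} and the Chapman--Kolmogorov identity~\eqref{eq:chapman}. Every integrand is nonnegative, so Tonelli's theorem justifies each interchange, and no finiteness has to be checked in advance. This also handles the diagonal case $x=y$ automatically, since both sides are then computed as the same value in $[0,\infty]$.

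\textbf{Step 1: reduce to a single time integral.} For fixed $x,y$, insert the definition of $G_\gamma$ twice and integrate first in $z$:
\begin{equation*}
\int_XG_\gamma(x,z)G_\gamma(z,y)\,d\mu(z)
=\frac{1}{\Gamma(\gamma)^2}\int_0^\infty\!\!\int_0^\infty s^{\gamma-1}t^{\gamma-1}p_{s+t}(x,y)\,ds\,dt\,.
\end{equation*}
The value of $G_\gamma$ on the diagonal $z=x$ or $z=y$ is irrelevant here. By Ahlfors regularity single points are $\mu$-null (balls of radius $r$ have measure of order $r^{d_h}\to0$), and in any case the definition extends naturally to the diagonal with value $+\infty$.

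\textbf{Step 2: Beta-function identity.} Change variables $r=s+t$, $s=r\theta$, so that $ds\,dt=r\,dr\,d\theta$. The double integral becomes
\begin{equation*}
\int_0^\infty r^{2\gamma-1}p_r(x,y)\,dr\int_0^1\theta^{\gamma-1}(1-\theta)^{\gamma-1}\,d\theta
=\Beta(\gamma,\gamma)\int_0^\infty r^{2\gamma-1}p_r(x,y)\,dr\,.
\end{equation*}
Since $\Beta(\gamma,\gamma)/\Gamma(\gamma)^2=1/\Gamma(2\gamma)$, the right-hand side equals $G_{2\gamma}(x,y)$ exactly as in~\eqref{eq:RieszKernel}. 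This applies because $2\gamma<d_h/d_w$, so $G_{2\gamma}$ lies in the range of Definition~\ref{def:fractional-laplacian-kernel}.

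\textbf{Step 3: the diagonal.} For $x=y$, the integral $\int_0^\infty r^{2\gamma-1}p_r(x,x)\,dr$ diverges at $r=0$ by the lower bound $p_r(x,x)\ge cr^{-d_h/d_w}$ from~\eqref{eq:subGauss}, because $2\gamma-d_h/d_w<0$. Hence both sides equal $+\infty$, consistent with the convention in the statement.

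\textbf{Expected obstacle.} The main point to watch is bookkeeping rather than any hard estimate. One must ensure that Tonelli, and not Fubini, is the tool used throughout, so that no integrability needs to be verified before the rearrangement; nonnegativity of $p_t$ and of all the weights makes this automatic. For $x\ne y$, finiteness of the left-hand side then follows a posteriori from Lemma~\ref{lem:fractional-laplacian-kernel-existence-and-bound} applied to $G_{2\gamma}$.
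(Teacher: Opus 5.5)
Your proposal is correct and follows the paper's proof: Tonelli's theorem and Chapman--Kolmogorov reduce the spatial integral to the double time integral, and the substitution $r=s+t$ together with $\Beta(\gamma,\gamma)=\Gamma(\gamma)^2/\Gamma(2\gamma)$ gives exactly $G_{2\gamma}(x,y)$. Your Step~3 also proves the diagonal convention, using the on-diagonal lower bound $p_r(x,x)\ge c\,r^{-d_h/d_w}$; the paper only asserts it.
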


\begin{proof}
By Tonelli's theorem and~\eqref{eq:chapman},
\begin{equation*}
\int_XG_\gamma(x,z)G_\gamma(z,y)\,d\mu(z)=\frac1{\Gamma(\gamma)^2}\int_0^\infty\!\!\int_0^\infty(st)^{\gamma-1}p_{s+t}(x,y)\,ds\,dt\,.
\end{equation*}
The change of variables $r=s+t$, $t=ru$ gives
\begin{align*}
\int_0^\infty\!\!\int_0^\infty(st)^{\gamma-1}p_{s+t}(x,y)\,ds\,dt&=\int_0^\infty p_r(x,y)r^{2\gamma-1}dr\int_0^1(1-u)^{\gamma-1}u^{\gamma-1}du\\
&=\Gamma(2\gamma)\Beta(\gamma,\gamma)G_{2\gamma}(x,y)\,,
\end{align*}
and the claim follows from $\Beta(\gamma,\gamma)=\Gamma(\gamma)^2/\Gamma(2\gamma)$.
\end{proof}

The following covariance identity is the analogue of~\cite[Corollary~2.11]{BaudoinChen2023} and~\cite[Proposition~3.1]{BOTW23}.

\begin{lem}\label{lem:sobolev-IP}
Let $\gamma\in(0,\frac{d_h}{2d_w})$ and let $f,g:X\to\R$ be measurable functions such that
\begin{equation}\label{eq:Lambda-cond}
\int_X\int_X|f(x)||f(y)|G_{2\gamma}(x,y)\,d\mu(x)d\mu(y)<\infty\,,
\end{equation}
and likewise for $g$. Then $G_\gamma f(x):=\int_XG_\gamma(x,y)f(y)\,d\mu(y)$ converges absolutely for $\mu$-a.e. $x$, $G_\gamma f\in L^2(X,\mu)$, and
\begin{equation}\label{eq:sobolev-IP}
\int_XG_\gamma f\,G_\gamma g\,d\mu=\int_X\int_Xf(x)g(y)G_{2\gamma}(x,y)\,d\mu(x)d\mu(y)\,,
\end{equation}
the right-hand side being absolutely convergent. In particular the right-hand side of~\eqref{eq:sobolev-IP} is a positive semidefinite bilinear form on the space of functions satisfying~\eqref{eq:Lambda-cond}. Moreover, every $f\in L^1(X,\mu)\cap L^\infty(X,\mu)$ satisfies~\eqref{eq:Lambda-cond}, and for $f,g$ in this space,
\begin{equation}\label{eq:sobolev-IP-S}
\int_X(-\Lap)^{-\gamma}f\,(-\Lap)^{-\gamma}g\,d\mu=\int_X\int_Xf(x)g(y)G_{2\gamma}(x,y)\,d\mu(x)d\mu(y)\,.
\end{equation}
\end{lem}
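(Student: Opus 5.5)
The plan is to reduce everything to Tonelli's theorem and the composition law of Lemma~\ref{lem:Kernel-2alpha-correspondence}, $G_{2\gamma}(x,y)=\int_X G_\gamma(x,z)G_\gamma(z,y)\,d\mu(z)$, with all kernels nonnegative.

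\textbf{Step 1: square integrability of $G_\gamma|f|$.} Applying Tonelli to the nonnegative integrand $|f(x)||f(y)|G_\gamma(z,x)G_\gamma(z,y)$ and using symmetry of $G_\gamma$ gives
\begin{equation*}
\int_X\Bigl(\int_X G_\gamma(z,x)|f(x)|\,d\mu(x)\Bigr)^2 d\mu(z)=\int_X\int_X|f(x)||f(y)|G_{2\gamma}(x,y)\,d\mu(x)\,d\mu(y)<\infty .
\end{equation*}
Hence $G_\gamma|f|(z)<\infty$ for $\mu$-a.e.\ $z$, so $G_\gamma f$ converges absolutely there, and $|G_\gamma f|\le G_\gamma|f|\in L^2$. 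The same holds for $g$.

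\textbf{Step 2: the identity~\eqref{eq:sobolev-IP}.} By Cauchy--Schwarz, $\int_X G_\gamma|f|\,G_\gamma|g|\,d\mu<\infty$. Expanding this product as a triple integral, the integrand $|f(x)||g(y)|G_\gamma(z,x)G_\gamma(z,y)$ is jointly integrable. By Tonelli in the other order, this gives absolute convergence of the right-hand side of~\eqref{eq:sobolev-IP}. Fubini then lets us remove the absolute values and exchange the order of integration, which yields~\eqref{eq:sobolev-IP}.

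Positive semidefiniteness follows because the form equals $\|G_\gamma f\|_{L^2}^2\ge0$ on the diagonal. The set of functions satisfying~\eqref{eq:Lambda-cond} is a vector space: writing $Q(h)=\|G_\gamma|h|\|_{L^2}^2$, we have $Q(f+g)\le(\|G_\gamma|f|\|_{L^2}+\|G_\gamma|g|\|_{L^2})^2<\infty$, so the bilinear form is well defined there.

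\textbf{Step 3: functions in $L^1\cap L^\infty$.} For $f\in L^1\cap L^\infty$, Lemma~\ref{lem:fractional-laplacian-kernel-is-a-kernel} applied with $2\gamma<d_h/d_w$ in place of $\gamma$ shows that $\int_X G_{2\gamma}(x,y)|f(y)|\,d\mu(y)$ is bounded uniformly in $x$. Integrating against $|f(x)|\in L^1$ then gives~\eqref{eq:Lambda-cond}. This requires the bound~\eqref{eq:G-integrable}, which holds for every exponent in $(0,d_h/d_w)$, so it covers $2\gamma$.

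Finally, Lemma~\ref{lem:fractional-laplacian-kernel-is-a-kernel} with exponent $\gamma$ identifies $(-\Lap)^{-\gamma}f=G_\gamma f$ $\mu$-a.e., and likewise for $g$. Substituting these into~\eqref{eq:sobolev-IP} gives~\eqref{eq:sobolev-IP-S}.

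The main obstacle is bookkeeping rather than any single estimate. Absolute convergence has to be justified before Fubini is invoked, which Step 1 handles through the Tonelli identity. Care is also needed at the diagonal, where $G_{2\gamma}=\infty$; this is harmless because the diagonal is $\mu\otimes\mu$-null, since Ahlfors regularity prevents atoms.
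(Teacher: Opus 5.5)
Your proof is correct and follows the paper's argument essentially step for step: the Tonelli identity through Lemma~\ref{lem:Kernel-2alpha-correspondence} giving $G_\gamma|f|\in L^2$, Cauchy--Schwarz to secure integrability on $X^3$ before invoking Fubini, the bound~\eqref{eq:G-integrable} at exponent $2\gamma$ (you correctly note that this bound, unlike the full Lemma~\ref{lem:fractional-laplacian-kernel-is-a-kernel}, holds on all of $(0,d_h/d_w)$), and Lemma~\ref{lem:fractional-laplacian-kernel-is-a-kernel} at exponent $\gamma$ to obtain~\eqref{eq:sobolev-IP-S}. Your additional check that the functions satisfying~\eqref{eq:Lambda-cond} form a vector space is a point the paper leaves implicit.
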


\begin{proof}
By Lemma~\ref{lem:Kernel-2alpha-correspondence} and Tonelli's theorem,
\begin{equation*}
\int_X\Bigl(\int_XG_\gamma(x,y)|f(y)|\,d\mu(y)\Bigr)^2d\mu(x)=\int_X\int_X|f(y)||f(z)|G_{2\gamma}(y,z)\,d\mu(y)d\mu(z)<\infty\,,
\end{equation*}
which shows that $G_\gamma|f|\in L^2$; in particular $G_\gamma|f|<\infty$ a.e., so $G_\gamma f$ is a.e.\ absolutely convergent and $|G_\gamma f|\le G_\gamma|f|$. The same holds for $g$, and by Cauchy--Schwarz the function $(x,y,z)\mapsto f(y)g(z)G_\gamma(x,y)G_\gamma(x,z)$ is integrable on $X^3$. Fubini's theorem and Lemma~\ref{lem:Kernel-2alpha-correspondence} then give~\eqref{eq:sobolev-IP}, and taking $f=g$ shows the positive semidefiniteness.  If $f\in L^1\cap L^\infty$, then estimate~\eqref{eq:G-integrable},
which was proved for every exponent in $(0,d_h/d_w)$, applies
with $2\gamma$ in place of $\gamma$, since $2\gamma<d_h/d_w$. Thus, $\sup_y\int_X|f(x)|G_{2\gamma}(x,y)\,d\mu(x)<\infty$, so that $\iint|f(x)||f(y)|G_{2\gamma}(x,y)\le C\|f\|_{L^1}<\infty$. Finally, \eqref{eq:sobolev-IP-S} follows from~\eqref{eq:sobolev-IP} and Lemma~\ref{lem:fractional-laplacian-kernel-is-a-kernel}.
\end{proof}

\begin{defn}\label{def:Sobolev-space}
Let $\gamma\in[0,\frac{d_h}{2d_w})$. If $\gamma>0$, let $\Lambda_\gamma$ be the vector space of measurable functions $f:X\to\R$ satisfying~\eqref{eq:Lambda-cond}, endowed with the positive semidefinite bilinear form
\begin{equation*}
\langle f,g\rangle_{\cW^{-\gamma}}=\int_X\int_Xf(x)g(y)G_{2\gamma}(x,y)\,d\mu(x)d\mu(y)\,.
\end{equation*}
The Sobolev space $\cW^{-\gamma}$ is the Hilbert space obtained by completing $\Lambda_\gamma$ with respect to $\langle\cdot,\cdot\rangle_{\cW^{-\gamma}}$. By Lemma~\ref{lem:sobolev-IP}, the map $f\mapsto G_\gamma f$ extends to a linear isometry from $\cW^{-\gamma}$ into $L^2(X,\mu)$, which we also denote by $(-\Lap)^{-\gamma}$. For $f\in L^1(X,\mu)\cap L^\infty(X,\mu)\subset\Lambda_\gamma$, this notation agrees with the spectral definition.

If $\gamma=0$, we set
\begin{equation*}
\Lambda_0=\cW^0=L^2(X,\mu),\qquad
\langle f,g\rangle_{\cW^0}=\int_Xf(x)g(x)\,d\mu(x),\qquad
(-\Lap)^0=I\,.
\end{equation*}
\end{defn}

\begin{lem}\label{lem:semigroup-negative-sobolev}
Let $\gamma\in[0,\frac{d_h}{2d_w})$. For every $f\in\Lambda_\gamma$ and $t>0$, the heat-kernel integral $P_tf$ is $\mu$-a.e.\ absolutely convergent and belongs to $\Lambda_\gamma$. These operators extend uniquely to a strongly continuous contraction semigroup on $\cW^{-\gamma}$ and for every $f\in\cW^{-\gamma}$ and $t>0$,
\begin{equation*}
(-\Lap)^{-\gamma}P_tf=P_t(-\Lap)^{-\gamma}f,
\qquad
\norm{P_tf}_{\cW^{-\gamma}}\le\norm{f}_{\cW^{-\gamma}}\,.
\end{equation*}
\end{lem}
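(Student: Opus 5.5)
The case $\gamma=0$ is immediate: $\Lambda_0=\cW^0=L^2$, and $P_t$ is a strongly continuous contraction semigroup on $L^2$. So fix $\gamma\in(0,\frac{d_h}{2d_w})$ and $f\in\Lambda_\gamma$. The whole argument rests on one kernel identity obtained from Chapman--Kolmogorov and Tonelli, exactly as in~\eqref{eq:two-point-kernel}:
\begin{equation*}
\int_X\int_Xp_t(x,y)p_t(x',y')G_{2\gamma}(y,y')\,d\mu(y)\,d\mu(y')=\frac1{\Gamma(2\gamma)}\int_0^\infty r^{2\gamma-1}p_{r+2t}(x,x')\,dr=:G^{(t)}_{2\gamma}(x,x')\,.
\end{equation*}
The right-hand side is at most $G_{2\gamma}(x,x')$ pointwise. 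To see this, substitute $s=r+2t$, which gives $\frac1{\Gamma(2\gamma)}\int_{2t}^\infty(s-2t)^{2\gamma-1}p_s\,ds$. Since $2\gamma-1$ may be positive, I will not compare integrands directly. Instead I will use the semigroup identity: $G^{(t)}_{2\gamma}$ is the kernel of $P_{2t}(-\Lap)^{-2\gamma}$. The cleanest route is to write it as $\int_X\int_XG_\gamma(x,z)p_{2t}(z,z')G_\gamma(z',x')\,d\mu\,d\mu$, which follows from Lemma~\ref{lem:Kernel-2alpha-correspondence}-type Tonelli manipulations.

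\emph{Step 1: absolute convergence and invariance of $\Lambda_\gamma$.} I apply the identity to $|f|$:
\begin{equation*}
\iint |f(x)||f(x')|G^{(t)}_{2\gamma}(x,x')\,d\mu\,d\mu
=\int_X\bigl(P_{t}G_\gamma|f|\bigr)^2d\mu\le\int_X(G_\gamma|f|)^2\,d\mu<\infty\,.
\end{equation*}
The equality uses Tonelli with the representation above, which gives $\iint G_\gamma(\cdot,z)p_{2t}G_\gamma$ and $p_{2t}=\int p_tp_t$. The inequality is $L^2$-contractivity of $P_t$, and finiteness comes from Lemma~\ref{lem:sobolev-IP}. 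This shows $P_t|f|\in\Lambda_\gamma$. Hence $P_t|f|(y)<\infty$ for a.e.\ $y$: indeed, $G_{2\gamma}>0$ everywhere by Lemma~\ref{lem:fractional-laplacian-kernel-existence-and-bound}, so if $P_t|f|$ were infinite on a set of positive measure the double integral would diverge. This gives a.e.\ absolute convergence of $P_tf$, and since $|P_tf|\le P_t|f|$, also $P_tf\in\Lambda_\gamma$.

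\emph{Step 2: commutation and contraction on $\Lambda_\gamma$.} By Tonelli/Fubini, justified by Step~1 applied to $|f|$, and by the symmetry $\int_XG_\gamma(x,y)p_t(y,z)\,d\mu(y)=\int_Xp_t(x,y)G_\gamma(y,z)\,d\mu(y)$ (both equal $\frac1{\Gamma(\gamma)}\int_0^\infty s^{\gamma-1}p_{s+t}(x,z)\,ds$), we get $G_\gamma(P_tf)=P_t(G_\gamma f)$ a.e. Using the isometry of Definition~\ref{def:Sobolev-space}, this gives
\begin{equation*}
\|P_tf\|_{\cW^{-\gamma}}=\|P_tG_\gamma f\|_{L^2}\le\|G_\gamma f\|_{L^2}=\|f\|_{\cW^{-\gamma}}\,.
\end{equation*}
The same identity shows that $P_t$ respects the null space of the seminorm, so it descends to the quotient. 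The semigroup property holds on $\Lambda_\gamma$ by Chapman--Kolmogorov, and everything extends uniquely by density and continuity to $\cW^{-\gamma}$. The commutation relation then holds on $\cW^{-\gamma}$ by continuity.

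\emph{Step 3: strong continuity.} Since $(-\Lap)^{-\gamma}$ is an isometry, $\|P_tf-f\|_{\cW^{-\gamma}}=\|P_tG_\gamma f-G_\gamma f\|_{L^2}\to0$ by strong continuity of $P_t$ on $L^2$. This holds for $f\in\Lambda_\gamma$, and then for all of $\cW^{-\gamma}$ by density combined with the uniform contraction bound.

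The main obstacle is Step~1: justifying the Tonelli manipulations that produce the identity for nonnegative (possibly non-integrable) $f$, and deducing a.e.\ finiteness of $P_t|f|$. All the kernels involved are nonnegative, so Tonelli applies freely. The positivity of $G_{2\gamma}$ is what converts finiteness of the quadratic form into a.e.\ finiteness of $P_t|f|$.
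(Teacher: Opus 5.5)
Your argument is correct and is essentially the paper's proof: you pass to $|f|$, use Tonelli and Chapman--Kolmogorov to intertwine $G_\gamma$ with $P_t$, bound the $\Lambda_\gamma$-form of $P_t|f|$ by $\|P_tG_\gamma|f|\|_{L^2}^2\le\|G_\gamma|f|\|_{L^2}^2$, get a.e.\ finiteness from positivity of a Riesz kernel, then extend and transfer strong continuity through the isometry. The only difference is cosmetic: you use $G_{2\gamma}>0$ in the quadratic form, where the paper uses $G_\gamma>0$ in $G_\gamma P_t|f|=P_tG_\gamma|f|\in L^2$.

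One correction: delete your opening claim that $G^{(t)}_{2\gamma}\le G_{2\gamma}$ pointwise, which is false in general when $2\gamma<1$. On $\R^3$ with $2\gamma=\frac12$, the kernel $|x|^{-2}$ is subharmonic away from the origin, so $P_{2t}$ increases it far from the singularity. Your remark about comparing integrands also has the sign backwards: the direct comparison works exactly when $2\gamma\ge1$. The claim is never used, though, since Step~1 only needs the quadratic-form inequality given by $L^2$-contractivity.
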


\begin{proof}
For $\gamma=0$, these are the usual $L^2$ properties of the heat semigroup. Suppose that $\gamma>0$ and let $f\in\Lambda_\gamma$ and $g=|f|$. Lemma~\ref{lem:sobolev-IP} gives $G_\gamma g\in L^2$. Tonelli's theorem, \eqref{eq:RieszKernel} and~\eqref{eq:chapman} yield
\begin{equation*}
G_\gamma P_tg=P_tG_\gamma g
\end{equation*}
as an identity of nonnegative functions, initially allowing infinite values. The right-hand side belongs to $L^2$. Since $G_\gamma$ is strictly positive off the diagonal, this also implies that $P_tg$ is finite $\mu$-a.e. By Lemma~\ref{lem:Kernel-2alpha-correspondence} and Tonelli's theorem,
\begin{equation*}
\begin{aligned}
&\int_X\int_X P_tg(x)P_tg(y)G_{2\gamma}(x,y)\,d\mu(x)d\mu(y)\\
&\qquad=\norm{G_\gamma P_tg}_{L^2}^2
=\norm{P_tG_\gamma g}_{L^2}^2
\le\norm{G_\gamma g}_{L^2}^2<\infty\,.
\end{aligned}
\end{equation*}
Thus $P_tg\in\Lambda_\gamma$, and $|P_tf|\le P_tg$ gives $P_tf\in\Lambda_\gamma$. Fubini's theorem now gives $G_\gamma P_tf=P_tG_\gamma f$. The isometry in Definition~\ref{def:Sobolev-space} proves contractivity, so the operators extend to $\cW^{-\gamma}$. The semigroup property and the intertwining identity pass to the completion. Finally, strong continuity on $L^2$ gives, for every $f\in\cW^{-\gamma}$,
\begin{equation*}
\norm{P_tf-f}_{\cW^{-\gamma}}
=\norm{P_t(-\Lap)^{-\gamma}f-(-\Lap)^{-\gamma}f}_{L^2}
\longrightarrow0\qquad(t\downarrow0)\,.\qedhere
\end{equation*}
\end{proof}

\subsection{The noise, the equation and the notion of solution}\label{section:PAM-def-and-existence}

From now on we fix $\alpha$ as in Notation~\ref{not:parameters} and consider $\cW^{-\alpha}$ and $\Lambda_\alpha$ as previously defined. Following the construction in~\cite[Section~3.1]{BOTW23}, we define the noise as an isonormal Gaussian process indexed by $L^2(\R_+;\cW^{-\alpha})$: white in time, with the space covariance given by the norm of $\cW^{-\alpha}$. For $\alpha>0$ this covariance is the pairing against $G_{2\alpha}$, and for $\alpha=0$ it is the $L^2(X,\mu)$ inner product, so that the two cases are treated by the same formalism, the only difference being the inner product~\eqref{eq:H-inner-product} versus~\eqref{eq:H0-inner-product}.

\begin{defn}\label{def:H-alpha}
We let $\cH_\alpha=L^2(\R_+;\cW^{-\alpha})$, i.e.\ the Hilbert space of (classes of) measurable maps $\varphi:\R_+\to\cW^{-\alpha}$ with
\begin{equation*}
\begin{aligned}
\langle\varphi,\psi\rangle_{\cH_\alpha}
&=\int_0^\infty\langle\varphi(t),\psi(t)\rangle_{\cW^{-\alpha}}dt\\
&=\int_0^\infty\langle(-\Lap)^{-\alpha}\varphi(t),(-\Lap)^{-\alpha}\psi(t)\rangle_{L^2(X,\mu)}dt\,.
\end{aligned}
\end{equation*}
If $\alpha>0$ and $\varphi,\psi:\R_+\times X\to\R$ are measurable with $\varphi(t,\cdot),\psi(t,\cdot)\in\Lambda_\alpha$ for a.e.\ $t$, then
\begin{equation}\label{eq:H-inner-product}
\langle\varphi,\psi\rangle_{\cH_\alpha}=\int_0^\infty\int_X\int_X\varphi(t,x)\psi(t,y)G_{2\alpha}(x,y)\,d\mu(x)d\mu(y)dt\,.
\end{equation}
If $\alpha=0$, then $\cH_0=L^2(\R_+\times X,dt\otimes\mu)$ and
\begin{equation}\label{eq:H0-inner-product}
\langle\varphi,\psi\rangle_{\cH_0}=\int_0^\infty\int_X\varphi(t,x)\psi(t,x)\,d\mu(x)dt\,.
\end{equation}
\end{defn}

\begin{defn}\label{def:fractional-noise}
On a complete probability space $(\Omega,\cF,\PP)$, let $\{W_\alpha(\varphi):\varphi\in\cH_\alpha\}$ be an isonormal Gaussian process on $\cH_\alpha$, i.e.\ a centered Gaussian family such that $\varphi\mapsto W_\alpha(\varphi)$ is linear and
\begin{equation*}
\EE\bigl[W_\alpha(\varphi)W_\alpha(\psi)\bigr]=\langle\varphi,\psi\rangle_{\cH_\alpha},\qquad\varphi,\psi\in\cH_\alpha\,.
\end{equation*}
We write $W_\alpha(\varphi)=\int_0^\infty\!\int_X\varphi(s,y)W_\alpha(ds,dy)$. For $t\ge0$ we let $\cF_t$ be the $\sigma$-field generated by $\{W_\alpha(\varphi):\varphi\in\cH_\alpha,\ \varphi(s,\cdot)=0\text{ for }s>t\}$ and the $\PP$-null sets.
\end{defn}

For $\alpha>0$, $\dot W_\alpha$ is a centered Gaussian noise which is white in time and has spatial covariance $G_{2\alpha}(x,y)\simeq d(x,y)^{2\alpha d_w-d_h}$; in the Euclidean case it is the Riesz-type noise with covariance $c|x-y|^{-(d-4\alpha)}$. At $\alpha=0$, which is admissible only when $d_h<d_w$, $W_0$ is space--time white noise: its spatial covariance is the identity on $L^2(X,\mu)$, equivalently the diagonal measure $d\mu(x)\delta_x(dy)$ on $X^2$. In all cases $W_\alpha$ can be viewed as a cylindrical Wiener process on $\cW^{-\alpha}$. 

\vspace{0.4cm}
The stochastic integral $\int_0^t\!\int_X\Phi(s,y)W_\alpha(ds,dy)$ of an $(\cF_t)$-adapted random field $\Phi$ with $\EE\int_0^t\|\Phi(s,\cdot)\|^2_{\cW^{-\alpha}}ds<\infty$ is defined as in Walsh~\cite{Walsh1986} and Dalang~\cite{Dalang1999}, and coincides with the Skorohod integral (divergence operator) of the Malliavin calculus associated with $W_\alpha$~\cite[Section 1.3]{Nualart2006}; see~\cite[Section 2]{HHNT15}, \cite[Section 2]{BalanChen2018} and~\cite[Definitions 3.2--3.3]{BCHOTW24} for this identification in the present type of setting, including the white-noise endpoint. It satisfies the isometry $\EE[(\int_0^t\!\int_X\Phi\,dW_\alpha)^2]=\EE\int_0^t\|\Phi(s,\cdot)\|^2_{\cW^{-\alpha}}ds$ and, for $p\ge2$, the Burkholder--Davis--Gundy inequality
\begin{equation}\label{eq:BDG}
\EE\Bigl[\Bigl|\int_0^t\!\!\int_X\Phi(s,y)W_\alpha(ds,dy)\Bigr|^p\Bigr]\le C_p\,\EE\Bigl[\Bigl(\int_0^t\|\Phi(s,\cdot)\|^2_{\cW^{-\alpha}}ds\Bigr)^{p/2}\Bigr]\,.
\end{equation}
\vspace{0.4cm}

We can now state precisely what we mean by a solution of the parabolic Anderson model
\begin{equation}\label{eq:pam-D}
\partial_{t}u(t,x)=\Lap u(t,x)+\beta\,u(t,x)\,\dot{W}_\alpha(t,x),\qquad u(0,\cdot)=u_0\in L^\infty(X,\mu)\,,
\end{equation}
which we interpret in the mild (It\^o--Skorohod) sense, as in~\cite[Definition~4.5]{BOTW23}.

\begin{defn}\label{def:mild-PAM-solution}
Let $u_0\in L^\infty(X,\mu)$ and $\beta>0$. A random field $u=\{u(t,x):(t,x)\in\R_+\times X\}$ is a (mild) solution of~\eqref{eq:pam-D} if:
\begin{enumerate}\itemsep=0pt
\item $u$ is $(\cF_t)$-adapted and jointly measurable with respect to $\mathcal B(\R_+\times X)\otimes\cF$;
\item for all $(t,x)\in\R_+\times X$, the random field $(s,y)\mapsto p_{t-s}(x,y)u(s,y)\1_{[0,t]}(s)$ is stochastically integrable, i.e.
\begin{equation*}
I(t,x):=\int_0^t\!\!\int_Xp_{t-s}(x,y)u(s,y)\,W_\alpha(ds,dy)
\end{equation*}
is well defined and $\EE[I(t,x)^2]<\infty$;
\item for all $(t,x)\in\R_+\times X$,
\begin{equation}\label{eq:mild}
u(t,x)=P_tu_0(x)+\beta I(t,x)\qquad\PP\text{-a.s.}\,.
\end{equation}
\end{enumerate}
\end{defn}

\subsection{Wiener chaos expansion}\label{section:Chaos_expansion}

Iterating the mild equation~\eqref{eq:mild} expresses the solution as a series of multiple Wiener--It\^o integrals whose kernels are products of heat kernels. This expansion is both the construction of the solution and the tool for obtaining both the upper and lower moment bounds. Let $I_k$ denote the $k$-th multiple Wiener-It\^o integral with respect to $W_\alpha$, $k\ge1$~\cite[Section 1.1]{Nualart2006}. Recall that $I_k:\cH_\alpha^{\otimes k}\to L^2(\Omega)$ is linear, that $I_k(\psi)=I_k(\tilde\psi)$ where $\tilde\psi$ is the symmetrization of $\psi$, that $\EE[I_k(\psi)]=0$, that the images of the $I_k$'s are mutually orthogonal in $L^2(\Omega)$ and orthogonal to the constants, and that
\begin{equation}\label{eq:Ik-isometry}
\EE\bigl[I_k(\psi)^2\bigr]=k!\,\|\tilde\psi\|^2_{\cH_\alpha^{\otimes k}}\le k!\,\|\psi\|^2_{\cH_\alpha^{\otimes k}},\qquad\psi\in\cH_\alpha^{\otimes k}\,.
\end{equation}
Every $F\in L^2(\Omega,\sigma(W_\alpha),\PP)$ admits the chaos expansion $$F=\EE[F]+\sum_{k\ge1}I_k(\psi_k)$$ with uniquely determined symmetric kernels $\psi_k\in\cH_\alpha^{\otimes k}$. For $k\ge1$, $(t,x)\in\R_+\times X$, $s=(s_1,\dots,s_k)\in\R_+^k$ and $y=(y_1,\dots,y_k)\in X^k$, define
\begin{equation}\label{eq:fk}
\begin{split}
f_k(s,y;t,x)=\frac{\beta^k}{k!}\,p_{t-s_{\sigma(k)}}(x,y_{\sigma(k)})\,p_{s_{\sigma(k)}-s_{\sigma(k-1)}}(y_{\sigma(k)},y_{\sigma(k-1)})\cdots p_{s_{\sigma(2)}-s_{\sigma(1)}}(y_{\sigma(2)},y_{\sigma(1)})\\
\times P_{s_{\sigma(1)}}u_0(y_{\sigma(1)})\,\1_{\{s_{\sigma(k)}<t\}}\,,
\end{split}
\end{equation}
where $\sigma$ is the permutation of $\{1,\dots,k\}$ such that $s_{\sigma(1)}<\dots<s_{\sigma(k)}$ (the set where two $s_i$ coincide being Lebesgue-null). By construction $f_k(\cdot;t,x)$ is symmetric in the $k$ variables $(s_i,y_i)$, and it vanishes unless all $s_i$ belong to $(0,t)$. The following standard facts are obtained by iterating the mild equation~\eqref{eq:mild} and using the relation between the Skorohod integral and multiple integrals~\cite[Section 1.3.1]{Nualart2006}; see~\cite[Section 4]{HuNualart2009}, \cite[Section~3.1]{HHNT15}, \cite[Section 2]{BalanChen2018}, \cite[Proposition 3.7]{BCHOTW24}, and~\cite[Section 4.1.1 and Proposition 6.2.2]{TindelAarhus2026} for detailed proofs in closely related settings, including the white-noise endpoint.

\begin{prop}\label{prop:chaos-exp-summability-implies-solution}
Let $u_0\in L^\infty(X,\mu)$ and $\beta>0$.
\begin{enumerate}
\item If $u$ is a solution of~\eqref{eq:pam-D} in the sense of Definition~\ref{def:mild-PAM-solution}, then for every $(t,x)\in\R_+\times X$ one has $f_k(\cdot;t,x)\in\cH_\alpha^{\otimes k}$ for all $k\ge1$ and
\begin{equation}\label{eq:chaos-expansion}
u(t,x)=P_tu_0(x)+\sum_{k=1}^\infty I_k\bigl(f_k(\cdot;t,x)\bigr)\quad\text{in }L^2(\Omega)\,,
\end{equation}
so that, by orthogonality and~\eqref{eq:Ik-isometry},
\begin{equation}\label{eq:second-moment-chaos}
\EE[u(t,x)^2]=(P_tu_0(x))^2+\sum_{k=1}^\infty k!\,\|f_k(\cdot;t,x)\|^2_{\cH_\alpha^{\otimes k}}\,.
\end{equation}
\item Conversely, if $f_k(\cdot;t,x)\in\cH_\alpha^{\otimes k}$ and
\begin{equation}\label{eq:summability}
\sum_{k=1}^\infty k!\,\|f_k(\cdot;t,x)\|^2_{\cH_\alpha^{\otimes k}}<\infty\qquad\text{for every }(t,x)\in\R_+\times X\,,
\end{equation}
then the random field defined by~\eqref{eq:chaos-expansion} is a solution of~\eqref{eq:pam-D}, and it is the unique solution.
\end{enumerate}
\end{prop}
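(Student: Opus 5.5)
For part (i), I would follow the standard iteration argument. Let $u$ be a solution. The first step is to show that $u(t,x)\in L^2(\Omega)$ and is $\sigma(W_\alpha)$-measurable. Then I would substitute the mild equation~\eqref{eq:mild} into itself $n$ times, which gives
\begin{equation*}
u(t,x)=P_tu_0(x)+\sum_{k=1}^{n}I_k\bigl(f_k(\cdot;t,x)\bigr)+R_{n+1}(t,x),
\end{equation*}
where $R_{n+1}$ is an $(n+1)$-fold iterated Walsh integral whose innermost integrand contains $u$. For a deterministic symmetric kernel, the iterated integral over the ordered simplex equals $I_k$ of the symmetrized kernel $f_k$; this is where the factor $1/k!$ and the permutation $\sigma$ in~\eqref{eq:fk} come from. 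The adapted Walsh integral coincides with the Skorohod integral, as recalled above. So $I_k(f_k)$ is the $k$-th chaos projection of $u(t,x)$, with $f_k(\cdot;t,x)\in\cH_\alpha^{\otimes k}$. For this, I would check that $R_{n+1}$ is orthogonal to the chaoses of order $\le n$. The reason is that $R_{n+1}$ is an $(n+1)$-fold iterated Itô integral of an adapted integrand, so by the duality $\EE[\delta(\Phi)F]=\EE[\langle\Phi,DF\rangle]$ applied $n+1$ times it is orthogonal to chaoses of order $\le n$. Uniqueness of chaos kernels then gives~\eqref{eq:chaos-expansion}, and~\eqref{eq:second-moment-chaos} follows from orthogonality and~\eqref{eq:Ik-isometry}.

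For part (ii), I would define $u$ by~\eqref{eq:chaos-expansion}; the series converges in $L^2(\Omega)$ by~\eqref{eq:summability}. Next I would choose a jointly measurable adapted version. Adaptedness holds because $f_k$ vanishes unless all $s_i<t$, so $I_k(f_k(\cdot;t,x))$ is $\cF_t$-measurable. Joint measurability follows by taking measurable versions of the finite partial sums and passing to an $L^2$-limit along a subsequence. I would then verify the mild equation chaos by chaos, using the recursion
\begin{equation*}
f_{k}(s,y;t,x)=\frac{\beta}{k}\sum_{j=1}^{k}p_{t-s_j}(x,y_j)\,f_{k-1}(\hat s_j,\hat y_j;s_j,y_j)\1_{\{s_j<t\}},
\end{equation*}
with $f_0(\cdot;s,y)=P_su_0(y)$. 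Combined with the formula $\delta(v\,I_{k-1}(g))=I_k(\widetilde{v\otimes g})$, this shows that the Skorohod integral of $p_{t-\cdot}(x,\cdot)u$ has chaos expansion $\sum_{k\ge1}\beta^{-1}I_k(f_k)$. The divergence is closed, so it suffices that
\begin{equation*}
\sum_k k!\,\bigl\|f_k(\cdot;t,x)\bigr\|^2_{\cH_\alpha^{\otimes k}}<\infty,
\end{equation*}
which is~\eqref{eq:summability}. This gives condition (ii) of Definition~\ref{def:mild-PAM-solution} and the identity~\eqref{eq:mild}. Uniqueness is then immediate from part (i): any solution has the same chaos kernels, hence coincides with $u$ a.s. at each $(t,x)$.

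The main obstacle is the $L^2$ control needed in part (i). One must show that the remainder $R_{n+1}$ is well defined, or, more simply, that every solution in the sense of Definition~\ref{def:mild-PAM-solution} has chaos kernels equal to $f_k$. The integrability assumed in that definition is only $\EE[I(t,x)^2]<\infty$ pointwise. To handle this, I would avoid remainders altogether and identify kernels directly. Write the chaos kernels of $u(s,y)$ as $g_k(\cdot;s,y)$. Since the adapted integral $I(t,x)$ equals the divergence, its $(k+1)$-st chaos kernel is the symmetrization of $p_{t-s}(x,y)g_k(\cdot;s,y)$. Matching chaoses in~\eqref{eq:mild} then gives the same recursion that $f_k$ satisfies, and induction on $k$ yields $g_k=f_k$. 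The technical point is that $\EE[I(t,x)^2]<\infty$ for all $(t,x)$ justifies the chaos-wise divergence formula, using Nualart's characterization of $\mathrm{Dom}\,\delta$ via chaos expansions. This step I would cite from~\cite[Proposition 1.3.7]{Nualart2006} and~\cite[Proposition 3.7]{BCHOTW24}, whose arguments only use the covariance structure $\cH_\alpha$ and not the specific space.
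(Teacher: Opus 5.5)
Your proposal is correct and is essentially the standard argument the paper points to without writing it out: iterate the mild equation and use the relation between Skorohod and multiple Wiener--It\^o integrals, i.e.\ $\delta\bigl(\sum_k I_k(h_k)\bigr)=\sum_k I_{k+1}(\tilde h_k)$, identify the kernels through the recursion for $f_k$, and read off uniqueness from part~(i). The one step worth making explicit in part~(ii) is that, since the noise is white in time, the time-ordered pieces of the symmetrization of $h_k=p_{t-s}(x,y)f_k(\cdot;s,y)$ are mutually orthogonal, so $k!\,\|h_k\|^2_{\cH_\alpha^{\otimes(k+1)}}=(k+1)!\,\|\tilde h_k\|^2_{\cH_\alpha^{\otimes(k+1)}}=\beta^{-2}(k+1)!\,\|f_{k+1}(\cdot;t,x)\|^2_{\cH_\alpha^{\otimes(k+1)}}$; this identity is what lets~\eqref{eq:summability} give both the $L^2(\Omega;\cH_\alpha)$-convergence of the integrands needed for closedness of $\delta$ and the Walsh--Dalang integrability required in Definition~\ref{def:mild-PAM-solution}.
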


We now estimate the chaos kernels by a recursion based on the covariance pairing of two heat kernels. Throughout, $\alpha$ satisfies~\eqref{eq:alpha-range}, $\beta>0$ and $u_0\in L^\infty(X,\mu)$. We write $\cH=\cH_\alpha$ and $f_k(\cdot;t,x)$ for the kernels~\eqref{eq:fk}.

\begin{lem}\label{lem:expansion-l2-estimates}
There exists a constant $C_\alpha>0$, depending only on the structural constants and on $\alpha$, such that for all $k\ge1$, $\rho>0$, $t>0$ and $x\in X$, $f_k(\cdot;t,x)\in\cH^{\otimes k}$ and
\begin{equation}\label{eq:key-estimate}
\norm{f_k(\cdot;t,x)}^2_{\cH^{\otimes k}}\le\frac{\beta^{2k}}{k!}\,\norm{u_0}_{L^\infty}^2\,C_\alpha^k\,\rho^{-k(1+2\alpha-\frac{d_h}{d_w})}\,e^{\rho t}\,.
\end{equation}
\end{lem}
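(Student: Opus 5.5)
Since $f_k(\cdot;t,x)$ is symmetric and $\1_{\{s_{\sigma(k)}<t\}}$ restricts to the $k!$ orderings of the times, I would write $\|f_k\|^2_{\cH^{\otimes k}}$ as $k!$ times the integral over the ordered simplex $\{0<s_1<\dots<s_k<t\}$. On the simplex $\sigma$ is the identity, so this gives
\begin{equation*}
\norm{f_k(\cdot;t,x)}^2_{\cH^{\otimes k}}=\frac{\beta^{2k}}{k!}\int_{0<s_1<\dots<s_k<t}\int_{X^{2k}}g_k(s,y)\,g_k(s,y')\prod_{j=1}^kG_{2\alpha}(y_j,y_j')\,d\mu^{\otimes k}(y)\,d\mu^{\otimes k}(y')\,ds\,,
\end{equation*}
where $g_k(s,y)=p_{t-s_k}(x,y_k)\cdots p_{s_2-s_1}(y_2,y_1)P_{s_1}u_0(y_1)$. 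For $\alpha=0$ the same formula holds with the pairing against $\prod_jG_{2\alpha}(y_j,y_j')$ replaced by setting $y'=y$.

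This formula presupposes that $g_k(s,\cdot)$ lies in the $k$-fold tensor version of $\Lambda_\alpha$, so that~\eqref{eq:H-inner-product} applies coordinatewise. I would justify it by Tonelli: replace $g_k$ by $|g_k|$ and show that the resulting nonnegative integral is finite. Finiteness of that integral yields membership in $\cH^{\otimes k}$ (via Lemma~\ref{lem:sobolev-IP} applied in each variable), and it also yields the bound. The factor $u_0$ is handled at once: $|P_{s_1}u_0(y_1)P_{s_1}u_0(y_1')|\le\|u_0\|^2_{L^\infty}$ by~\eqref{eq:semigroup-infinity-bound}.

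Next I would integrate out the spatial variables in the order $(y_1,y_1'),(y_2,y_2'),\dots$. With all factors nonnegative, the innermost integral is
\begin{equation*}
\int_X\int_Xp_{s_2-s_1}(y_2,y_1)\,p_{s_2-s_1}(y_2',y_1')\,G_{2\alpha}(y_1,y_1')\,d\mu(y_1)\,d\mu(y_1')\,.
\end{equation*}
By~\eqref{eq:two-point-kernel} this is bounded by $C(s_2-s_1)^{2\alpha-d_h/d_w}$ uniformly in $(y_2,y_2')$; for $\alpha=0$ one uses~\eqref{eq:two-point-kernel-zero} instead. Pulling this supremum out, the remaining integrand has the same structure with $k-1$ variables, so the step can be repeated. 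After $k$ steps, with $s_{k+1}:=t$ and the last step using $p_{t-s_k}(x,\cdot)$ twice at the same point $x$, this gives
\begin{equation*}
\norm{f_k}^2_{\cH^{\otimes k}}\le\frac{\beta^{2k}}{k!}\,\|u_0\|^2_{L^\infty}\,C^k\int_{0<s_1<\dots<s_k<t}\prod_{j=1}^k(s_{j+1}-s_j)^{-\theta}\,ds,\qquad\theta=\frac{d_h}{d_w}-2\alpha\in(0,1)\,,
\end{equation*}
where $\theta\in(0,1)$ by~\eqref{eq:parameter-inequalities}.

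For the time integral I would change variables to the increments $r_j=s_{j+1}-s_j>0$, $j=1,\dots,k$, whose sum is at most $t$. Inserting $1\le e^{\rho t}\prod_je^{-\rho r_j}$ and dropping the constraint $\sum_j r_j\le t$ bounds the integral by
\begin{equation*}
e^{\rho t}\Bigl(\int_0^\infty r^{-\theta}e^{-\rho r}\,dr\Bigr)^k=e^{\rho t}\bigl(\Gamma(1-\theta)\,\rho^{\theta-1}\bigr)^k\,.
\end{equation*}
Since $1-\theta=1+2\alpha-d_h/d_w$, this yields~\eqref{eq:key-estimate} with $C_\alpha=C\,\Gamma(1-\theta)$.

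I expect the main obstacle to be the rigorous identification of the $\cH^{\otimes k}$-norm with the explicit kernel integral, for functions that are only known to satisfy the $\Lambda_\alpha$-type integrability. I would handle it by applying Lemma~\ref{lem:sobolev-IP} variable by variable, using the isometry $G_\alpha^{\otimes k}$ into $L^2(X^k)$, and appealing to Tonelli on $|g_k|$. The recursive bound itself is routine once positivity is in place.
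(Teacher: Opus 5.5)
Your argument is correct and gives the paper's bound with the same constant $C_\alpha=C\,\Gamma(1+2\alpha-d_h/d_w)$, but it takes a different route. The paper never writes $\norm{f_k}^2_{\cH^{\otimes k}}$ as an explicit integral. Instead it proceeds as follows:
\begin{enumerate}
\item It builds the chaos terms stochastically, as iterated Walsh--Dalang integrals $v_{k+1}(t,x)=\beta\int_0^t\int_X p_{t-s}(x,z)v_k(s,z)\,W_\alpha(ds,dz)$.
\item It sets $M_k(t)=\sup_x\EE[|v_k(t,x)|^2]$ and proves the recursion $M_{k+1}(t)\le C\beta^2\int_0^t(t-s)^{2\alpha-d_h/d_w}M_k(s)\,ds$. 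The inputs are the It\^o isometry, Cauchy--Schwarz on $\EE[v_k(s,z)v_k(s,z')]$, and \eqref{eq:two-point-kernel}.
\item It closes the recursion by induction with the weight $e^{\rho t}$.
\item Only at the end does it identify $v_k=I_k(f_k)$, giving $k!\,\norm{f_k}^2_{\cH^{\otimes k}}=\EE[|v_k|^2]\le M_k(t)$.
\end{enumerate}

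So the paper peels the heat-kernel chain from the $x$-end through second moments, while you peel it deterministically from the $u_0$-end. The analytic input, the two-point bound followed by the Gamma integral, is identical.

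The paper's version constructs the iterated integrals simultaneously, which the existence argument uses. It also gets $f_k\in\cH^{\otimes k}$ for free from the iterated-integral theory. Your version is self-contained and avoids stochastic integration. The price is the identification you flag: a measurable kernel with finite absolute pairing must define an element of $\cH^{\otimes k}$ whose norm is the explicit integral. That is a routine density argument, and the paper itself uses it without comment for the kernels $g_k$ in Lemma~\ref{lem:kernel-chaos-estimate}.

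Your route also shows that the simplex integral can be computed exactly:
\begin{equation*}
\int_{r_j>0,\ \sum_j r_j<t}\prod_{j=1}^k r_j^{-\theta}\,dr=\frac{\Gamma(1-\theta)^k\,t^{k(1-\theta)}}{\Gamma(1+k(1-\theta))}\,.
\end{equation*}
The exponential weight is therefore only a convenient way to reach the stated form, and a Mittag-Leffler-type bound is available directly.
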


\begin{proof}
We use the exponentially weighted recursion of~\cite[Section~3.2]{BCO24} and~\cite[Sections~3--4]{FoondunKhoshnevisan2009}. Starting from $v_0(t,x)=P_tu_0(x)$, construct the chaos terms successively by
\begin{equation*}
v_{k+1}(t,x)=\beta\int_0^t\!\int_Xp_{t-s}(x,z)v_k(s,z)\,W_\alpha(ds,dz)\,.
\end{equation*}
The estimates below justify this construction at each step. Set
\begin{equation}\label{eq:Mk-def}
M_k(t):=\sup_{x\in X}\EE[|v_k(t,x)|^2],\qquad k\ge0\,.
\end{equation}
In particular, $M_0(t)\le\|u_0\|_{L^\infty}^2$.

Suppose that $v_k$ has been constructed. For $\alpha>0$, the stochastic-integral isometry gives
\begin{align*}
\EE[|v_{k+1}(t,x)|^2]
&=\beta^2\int_0^t\int_{X^2}p_{t-s}(x,z)p_{t-s}(x,z')G_{2\alpha}(z,z')\\
&\qquad\times\EE[v_k(s,z)v_k(s,z')]\,d\mu(z)d\mu(z')\,ds\,.
\end{align*}
By Cauchy--Schwarz,
\begin{equation*}
|\EE[v_k(s,z)v_k(s,z')]|\le
\EE[|v_k(s,z)|^2]^{1/2}\EE[|v_k(s,z')|^2]^{1/2}\le M_k(s)\,.
\end{equation*}
Since $G_{2\alpha}\ge0$, estimate~\eqref{eq:two-point-kernel} yields
\begin{equation}\label{eq:Mk-recursion}
M_{k+1}(t)\le C\beta^2\int_0^t(t-s)^{2\alpha-d_h/d_w}M_k(s)\,ds\,.
\end{equation}
For $\alpha=0$, the same conclusion follows from
\begin{align*}
\EE[|v_{k+1}(t,x)|^2]
&=\beta^2\int_0^t\int_Xp_{t-s}(x,z)^2\EE[|v_k(s,z)|^2]\,d\mu(z)\,ds\\
&\le\beta^2\int_0^t p_{2(t-s)}(x,x)M_k(s)\,ds
\le C\beta^2\int_0^t(t-s)^{-d_h/d_w}M_k(s)\,ds\,
\end{align*}
see also~\cite[Lemma~3.9]{BCHOTW24}. These calculations also bound the expected squared norm of the stochastic integrand. Thus finiteness of the right-hand side of~\eqref{eq:Mk-recursion} suffices to define $v_{k+1}$ and justify the isometry.

Fix $\rho>0$ and choose $C_\alpha=C\Gamma(1+2\alpha-d_h/d_w)$, where $C$ is the constant in~\eqref{eq:Mk-recursion}. We prove by induction that
\begin{equation}\label{eq:Mk-bound}
M_k(t)\le\|u_0\|_{L^\infty}^2e^{\rho t}
\left(C_\alpha\beta^2\rho^{d_h/d_w-1-2\alpha}\right)^k\,.
\end{equation}
The case $k=0$ follows from the $L^\infty$ contraction of $P_t$. If the bound holds for $k$, then~\eqref{eq:Mk-recursion} gives
\begin{align*}
M_{k+1}(t)
&\le\|u_0\|_{L^\infty}^2e^{\rho t}
\left(C_\alpha\beta^2\rho^{d_h/d_w-1-2\alpha}\right)^k
C\beta^2\int_0^t e^{-\rho r}r^{2\alpha-d_h/d_w}\,dr\\
&\le\|u_0\|_{L^\infty}^2e^{\rho t}
\left(C_\alpha\beta^2\rho^{d_h/d_w-1-2\alpha}\right)^{k+1}\,.
\end{align*}
Here we used
\begin{equation*}
\int_0^\infty e^{-\rho r}r^{2\alpha-d_h/d_w}\,dr
=\Gamma\left(1+2\alpha-\frac{d_h}{d_w}\right)\rho^{d_h/d_w-1-2\alpha}<\infty\,,
\end{equation*}
which is finite by~\eqref{eq:alpha-range}. This proves both the recursive construction and~\eqref{eq:Mk-bound}.

Iterating the stochastic integrals and symmetrizing their ordered kernels gives $v_k(t,x)=I_k(f_k(\cdot;t,x))$, with $f_k$ as in~\eqref{eq:fk}; see~\cite[Section~1.3.1]{Nualart2006}. In particular, $f_k(\cdot;t,x)\in\cH^{\otimes k}$, and the isometry gives
\begin{equation}\label{eq:fk-vs-Mk}
k!\,\norm{f_k(\cdot;t,x)}_{\cH^{\otimes k}}^2
=\EE[|v_k(t,x)|^2]\le M_k(t)\,.
\end{equation}
Combining~\eqref{eq:fk-vs-Mk} with~\eqref{eq:Mk-bound} proves~\eqref{eq:key-estimate}.
\end{proof}

\subsection{Existence, uniqueness and finite-time moment bounds}\label{sec:finite-time}

The geometric bound~\eqref{eq:key-estimate} makes the chaos series summable once $\rho$ is large enough for $C_\alpha\beta^2\rho^{d_h/d_w-1-2\alpha}$ to be less than one.

\begin{thm}\label{thm:exist-unique-wholespace}
 Let $\alpha$ satisfy~\eqref{eq:alpha-range}, $\beta>0$ and $u_0\in L^\infty(X,\mu)$. Then the equation
\begin{equation*}
\partial_{t}u(t,x)=\Lap u(t,x)+\beta\,u(t,x)\,\dot{W}_\alpha(t,x),\qquad u(0,\cdot)=u_0\in L^\infty(X,\mu)\,,
\end{equation*}
has a unique solution $u$ in the sense of Definition~\ref{def:mild-PAM-solution}. It is given by the chaos expansion~\eqref{eq:chaos-expansion}, and for every $\rho>\rho_c:=(C_\alpha\beta^2)^{\frac{d_w}{(1+2\alpha)d_w-d_h}}$, with $C_\alpha$ as in Lemma~\ref{lem:expansion-l2-estimates}, and all $(t,x)\in\R_+\times X$,
\begin{equation}\label{eq:chaos-sum-bound}
\mathbb{E}(u(t,x)^2)\le\norm{u_0}^2_{L^\infty}\,e^{\rho t}\sum_{k=0}^\infty\bigl(C_\alpha\beta^2\rho^{\frac{d_h}{d_w}-1-2\alpha}\bigr)^k<\infty\,.
\end{equation}
Moreover, if $u_0\ge0$ $\mu$-a.e., then $u(t,x)\ge0$ $\PP$-a.s. for every $(t,x)$. If, in addition, $u_0\ne0$ in $L^\infty(X,\mu)$, then $u(t,x)>0$ $\PP$-a.s. for every $(t,x)\in\R_+\times X$.
\end{thm}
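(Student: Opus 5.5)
Existence, uniqueness and the bound \eqref{eq:chaos-sum-bound} will follow directly from Proposition~\ref{prop:chaos-exp-summability-implies-solution}(ii) combined with Lemma~\ref{lem:expansion-l2-estimates}. For fixed $(t,x)$, Lemma~\ref{lem:expansion-l2-estimates} gives $f_k(\cdot;t,x)\in\cH^{\otimes k}$ and
\begin{equation*}
k!\,\norm{f_k(\cdot;t,x)}^2_{\cH^{\otimes k}}\le\norm{u_0}^2_{L^\infty}e^{\rho t}\bigl(C_\alpha\beta^2\rho^{\frac{d_h}{d_w}-1-2\alpha}\bigr)^k .
\end{equation*}
The exponent $\frac{d_h}{d_w}-1-2\alpha$ is negative by \eqref{eq:parameter-inequalities}. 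Hence the ratio is $<1$ exactly when $\rho^{1+2\alpha-d_h/d_w}>C_\alpha\beta^2$, i.e.\ $\rho>\rho_c$. The geometric series then converges, so \eqref{eq:summability} holds and Proposition~\ref{prop:chaos-exp-summability-implies-solution}(ii) yields a unique solution given by \eqref{eq:chaos-expansion}. Adding $(P_tu_0(x))^2\le\norm{u_0}_{L^\infty}^2$ (the $k=0$ term) in \eqref{eq:second-moment-chaos} gives \eqref{eq:chaos-sum-bound}.

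For nonnegativity I would use an approximation argument. Replace the noise by a spatially and temporally smoothed version $W^{\eps}$, obtained by pairing $W_\alpha$ with $\1_{[s,s+\eps]}(\cdot)\,p_\eps(y,\cdot)/\eps$, giving a field $\dot W^\eps(s,y)$ that is a.s.\ continuous and locally bounded. For such regular noise, the Skorohod solution $u^\eps$ has the Feynman--Kac representation with Wick renormalization:
\begin{equation*}
u^\eps(t,x)=\EE_x\Bigl[u_0(B_t)\exp\Bigl(\beta\int_0^t\dot W^\eps(t-s,B_s)\,ds-\tfrac{\beta^2}{2}\norm{\text{integrand}}^2_{\cH}\Bigr)\Bigr],
\end{equation*}
which is manifestly $\ge0$ when $u_0\ge0$. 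Its chaos kernels are $f_k$ with each heat kernel factor mollified. Using the same recursion as in Lemma~\ref{lem:expansion-l2-estimates}, with the convolution bounds \eqref{eq:two-point-kernel} uniform in $\eps$, together with dominated convergence chaos by chaos, I would show $u^\eps(t,x)\to u(t,x)$ in $L^2(\Omega)$. Nonnegativity then passes to the limit. As a simpler alternative, one could cite the analogous argument in \cite[Section~3]{BCHOTW24} and \cite{HHNT15}.

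Strict positivity is the step I expect to be the main obstacle, since the sub-Gaussian setting has no translation invariance. My first attempt would again use the approximations. By Jensen/Hölder in the Feynman--Kac formula, $u^\eps(t,x)\ge P_tu_0(x)\exp(\beta Z_\eps-c_\eps)$, where $Z_\eps$ is Gaussian under the path measure $\nu_{x,t}(d\omega)\propto u_0(B_t)\,d\PP_x$. Here $P_tu_0(x)>0$ by the heat kernel lower bound in \eqref{eq:subGauss} and $u_0\ne0$, $u_0\ge0$. I would aim to show that $Z_\eps$ and $c_\eps$ converge in $L^2(\Omega)$, so that $\log u^\eps(t,x)$ stays bounded below in probability uniformly in $\eps$, and hence $\PP(u(t,x)=0)=0$. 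This requires controlling the variance of $\int\int \dot W^\eps(t-s,B_s)\,ds\,\nu(d\omega)$ by a double heat kernel integral, which is finite thanks to \eqref{eq:two-point-kernel}. If this turns out to be delicate at the white-noise endpoint, the fallback is a Mueller-type argument: a comparison principle together with the weak form of the equation and small-ball/large-deviation estimates for the noise, following \cite{BCHOTW24}.
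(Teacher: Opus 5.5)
Your derivation of existence, uniqueness and \eqref{eq:chaos-sum-bound} from Lemma~\ref{lem:expansion-l2-estimates} and Proposition~\ref{prop:chaos-exp-summability-implies-solution} is exactly the paper's. Your nonnegativity argument follows the same Feynman--Kac regularization scheme. The paper smooths only in space, $W_{\alpha,\eps}(\varphi)=W_\alpha(P_\eps\varphi)$. The noise then stays white in time, the chaos kernels are exactly $P_\eps^{\otimes k}f_k$, and their convergence follows at once from Lemma~\ref{lem:semigroup-negative-sobolev}. Your time averaging would also work, but it makes the noise anticipating.

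\textbf{The gap is in strict positivity.} In your bound $u^\eps(t,x)\ge P_tu_0(x)\exp(\beta Z_\eps-c_\eps)$, the constant $c_\eps=\frac{\beta^2}{2}\EE_\nu\bigl[\norm{g^\eps_B}_{\cH}^2\bigr]$ is the Wick renormalization of a single path $B$, and it diverges.
\begin{itemize}
\item With spatial smoothing, $\norm{g^\eps_B}_{\cH}^2=\int_0^t\norm{p_\eps(B_{t-s},\cdot)}^2_{\cW^{-\alpha}}\,ds$.
\item By \eqref{eq:two-point-kernel}--\eqref{eq:two-point-kernel-zero} with $x=x'$, together with the on-diagonal lower bound $p_{r+2\eps}(z,z)\ge c(r+2\eps)^{-d_h/d_w}$, one gets $\norm{p_\eps(z,\cdot)}^2_{\cW^{-\alpha}}\ge c\,\eps^{2\alpha-d_h/d_w}$ for every $z$.
\item Hence $c_\eps\ge c\beta^2t\,\eps^{2\alpha-d_h/d_w}\to\infty$, uniformly over paths, since $2\alpha<d_h/d_w$. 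Your space--time mollifier blows up at the same order.
\end{itemize}
Only the two-path quantity $\mathrm{Var}(Z_\eps)$ is controlled by \eqref{eq:two-point-kernel}. The Jensen lower bound therefore tends to $0$ in probability and gives no information on $\PP(u(t,x)=0)$. This divergence holds throughout the range \eqref{eq:alpha-range}; it is not specific to the white-noise endpoint. The Mueller-type fallback is only named. It would require a comparison principle, continuity and large-deviation estimates for this equation on a general sub-Gaussian space, none of which you provide.

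\textbf{The missing idea: an Ornstein--Uhlenbeck argument}, which uses only the nonnegativity you already have and the chaos structure.
\begin{itemize}
\item Let $\mathcal Q_s$ be the Ornstein--Uhlenbeck semigroup, $\mathcal Q_sI_k(f)=e^{-ks}I_k(f)$.
\item The $k$-th kernel $f_k$ is proportional to $\beta^k$, so comparing the $L^2$-convergent chaos expansions gives $\mathcal Q_s\bigl(u_{\beta e^s}(t,x)\bigr)=u_\beta(t,x)$ for $s>0$. Here $u_\beta$ denotes the solution with coupling constant $\beta$.
\item By the nonnegativity step, $u_{\beta e^s}(t,x)\ge0$. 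Its mean is $P_tu_0(x)>0$ by the lower bound in \eqref{eq:subGauss}, since $u_0\ge0$ and $u_0\ne0$; so it is not a.s.\ zero.
\item $\mathcal Q_s$ is positivity improving (Mehler's formula), so $u_\beta(t,x)>0$ almost surely.
\end{itemize}
This avoids any renormalized Feynman--Kac limit and any comparison principle.
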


\begin{proof}
We use Proposition  \ref{prop:chaos-exp-summability-implies-solution}. By Lemma~\ref{lem:expansion-l2-estimates}, $k!\|f_k(\cdot;t,x)\|^2_{\cH^{\otimes k}}\le\|u_0\|^2_{L^\infty}e^{\rho t}\delta_\rho^k$ with $\delta_\rho:=C_\alpha\beta^2\rho^{\frac{d_h}{d_w}-1-2\alpha}$, and $\delta_\rho<1$ if and only if $\rho>\rho_c$, since $\rho\mapsto\rho^{\frac{d_h}{d_w}-1-2\alpha}$ is strictly decreasing. Fix such a $\rho$. Orthogonality gives~\eqref{eq:chaos-sum-bound}.

Suppose now that $u_0\ge0$ $\mu$-a.e. We sketch the usual Feynman-Kac regularization argument which is used to prove non-negativity of $u(t,x)$; see~\cite[Section~3.2]{HHNT15}. For $\eps>0$, we smooth the noise in space by setting $W_{\alpha,\eps}(\varphi)=W_\alpha(P_\eps\varphi)$, where $P_\eps$ acts on the spatial variable. The regularized solution of the corresponding PAM has the Wick Feynman--Kac representation
\begin{equation*}
u_\eps(t,x)=\EE_x\Bigl[u_0(B_t)\exp\Bigl(\beta W_\alpha(g_{t,B}^\eps)-\frac{\beta^2}{2}\|g_{t,B}^\eps\|_{\cH}^2\Bigr)\Bigr]\ge0\,,
\end{equation*}
where $g_{t,B}^\eps(s,y)=\1_{(0,t)}(s)p_\eps(B_{t-s},y)$ and $\EE_x$ averages only over the diffusion, independently of the noise. The bound~\eqref{eq:sup-kernel-1} gives $\|g_{t,B}^\eps\|_{\cH}^2\le Ct\eps^{2\alpha-d_h/d_w}$, so the expression is well defined. Expanding the Wick exponential and using the Markov property identifies the chaos kernels as $f_{k,\eps}=P_\eps^{\otimes k}f_k$, with $P_\eps$ acting on each spatial variable. By Lemma~\ref{lem:semigroup-negative-sobolev}, $P_\eps$ is a strongly continuous contraction on $\cW^{-\alpha}$, so these kernels converge in $\cH^{\otimes k}$ to $f_k$ and their norms are bounded by $\|f_k\|_{\cH^{\otimes k}}$. The summability in~\eqref{eq:chaos-sum-bound} therefore gives $u_\eps(t,x)\to u(t,x)$ in $L^2(\Omega)$, proving nonnegativity for bounded measurable initial conditions, including when $\alpha=0$.

For strict positivity, fix $t>0$ and $x\in X$, and write $u_\beta$ for the solution with coupling constant $\beta$. Let $(\mathcal Q_s)_{s\ge0}$ be the Ornstein-Uhlenbeck semigroup on $L^2(\Omega,\sigma(W_\alpha),\PP)$, characterized by $\mathcal Q_sI_k(f)=e^{-ks}I_k(f)$~\cite[Section~1.4.3]{Nualart2006}. This symmetric Markov semigroup is  positivity improving by Mehler's formula. Since the $k$-th chaos kernel in~\eqref{eq:fk} is proportional to $\beta^k$, the $L^2$-convergent chaos expansions give
\begin{equation*}
\mathcal Q_s\bigl(u_{\beta e^s}(t,x)\bigr)=u_\beta(t,x),\qquad s>0\,.
\end{equation*}
If $u_0\ge0$ and $u_0\ne0$ in $L^\infty(X,\mu)$, then $u_{\beta e^s}(t,x)\ge0$ almost surely, and the chaos expansion gives $\EE[u_{\beta e^s}(t,x)]=P_tu_0(x)>0$ by~\eqref{eq:subGauss}. Positivity improvement therefore yields $u_\beta(t,x)>0$ almost surely.
\end{proof}

\begin{remark}
For comparison principles for  general stochastic reaction--diffusion equations on metric measure spaces, see~\cite[Theorem~1.4]{FanSunYang2026}.
\end{remark}

The same argument yields $L^p$-bounds for every real $p\ge2$: hypercontractivity bounds the $L^p(\Omega)$ norm of the $k$-th chaos by $(p-1)^{k/2}$ times its $L^2(\Omega)$ norm, which replaces $\beta^2$ by $(p-1)\beta^2$ in the geometric ratio.

\begin{prop}\label{prop:Lp-estimates}
Under the assumptions of Theorem~\ref{thm:exist-unique-wholespace}, let $p\ge2$ be a real number and set $\rho_c(p):=(C_\alpha(p-1)\beta^2)^{\frac{d_w}{(1+2\alpha)d_w-d_h}}$, where $C_\alpha$ is the constant in Lemma~\ref{lem:expansion-l2-estimates}. Then the chaos series~\eqref{eq:chaos-expansion} converges in $L^p(\Omega)$, and for every $\rho>\rho_c(p)$ there is $C_{\rho,p}<\infty$ such that
\begin{equation}\label{eq:Lp-bound}
\sup_{x\in X}\norm{u(t,x)}_{L^p(\Omega)}\le C_{\rho,p}\,e^{\rho t/2}\,\norm{u_0}_{L^\infty}\qquad\text{for all }t>0\,.
\end{equation}
In particular, for every $T>0$,
\begin{equation}\label{eq:finite-time-moments}
\sup_{(t,x)\in(0,T]\times X}\EE\bigl[|u(t,x)|^p\bigr]<\infty\,.
\end{equation}
Moreover, for every $x\in X$,
\begin{equation}\label{eq:Lp-limsup}
\limsup_{t\to\infty}\frac{\log\EE[|u(t,x)|^p]}{t}\le\frac p2\,\rho_c(p)=\frac{p}{2}\,C_\alpha^{\frac{d_w}{(1+2\alpha)d_w-d_h}}\,\bigl(\beta^2(p-1)\bigr)^{\frac{d_w}{(1+2\alpha)d_w-d_h}}\,.
\end{equation}
\end{prop}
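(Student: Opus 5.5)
The argument runs chaos by chaos: hypercontractivity, the $L^2$ bounds of Lemma~\ref{lem:expansion-l2-estimates}, then summation in $L^p(\Omega)$. For $p\ge2$ and any element $F$ of the $k$-th Wiener chaos, Nelson's hypercontractivity for the Ornstein--Uhlenbeck semigroup~\cite[Section~1.4.3]{Nualart2006} gives
\begin{equation*}
\|F\|_{L^p(\Omega)}\le(p-1)^{k/2}\|F\|_{L^2(\Omega)}.
\end{equation*}
We apply this to $F=I_k(f_k(\cdot;t,x))$ and combine it with~\eqref{eq:Ik-isometry} and~\eqref{eq:key-estimate}:
\begin{equation*}
\norm{I_k(f_k(\cdot;t,x))}_{L^p(\Omega)}\le(p-1)^{k/2}\bigl(k!\,\norm{f_k(\cdot;t,x)}^2_{\cH^{\otimes k}}\bigr)^{1/2}\le\norm{u_0}_{L^\infty}e^{\rho t/2}\bigl(C_\alpha(p-1)\beta^2\rho^{\frac{d_h}{d_w}-1-2\alpha}\bigr)^{k/2}.
\end{equation*}
This holds for every $\rho>0$, uniformly in $x$.

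Next set $\delta_\rho(p)=C_\alpha(p-1)\beta^2\rho^{d_h/d_w-1-2\alpha}$. Since $1+2\alpha-d_h/d_w>0$ by~\eqref{eq:parameter-inequalities}, we have $\delta_\rho(p)<1$ exactly when $\rho>\rho_c(p)$. Fix such a $\rho$. By Minkowski's inequality the chaos series~\eqref{eq:chaos-expansion} is absolutely convergent in $L^p(\Omega)$. Its $L^p$-limit agrees with the $L^2$-limit $u(t,x)$ from Theorem~\ref{thm:exist-unique-wholespace}, because $L^p$ convergence implies $L^2$ convergence on a probability space. The zeroth term is bounded by $\norm{u_0}_{L^\infty}$ via~\eqref{eq:semigroup-infinity-bound}, so
\begin{equation*}
\sup_x\norm{u(t,x)}_{L^p(\Omega)}\le\norm{u_0}_{L^\infty}e^{\rho t/2}\sum_{k\ge0}\delta_\rho(p)^{k/2}.
\end{equation*}
This is~\eqref{eq:Lp-bound} with $C_{\rho,p}=(1-\delta_\rho(p)^{1/2})^{-1}$. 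Raising it to the power $p$ and taking $t\le T$ gives~\eqref{eq:finite-time-moments}.

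For~\eqref{eq:Lp-limsup}, take logarithms in the $p$-th power of~\eqref{eq:Lp-bound}:
\begin{equation*}
\log\EE|u(t,x)|^p\le p\log(C_{\rho,p}\norm{u_0}_{L^\infty})+\tfrac{p}{2}\rho t.
\end{equation*}
Dividing by $t$ and letting $t\to\infty$ leaves $\frac p2\rho$. Letting $\rho\downarrow\rho_c(p)$ then yields $\frac p2\rho_c(p)$, and expanding $\rho_c(p)$ gives the stated formula. If $u_0=0$ then $u\equiv0$ and the claim is trivial.

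The only nontrivial input is hypercontractivity on the Gaussian space of the isonormal process $W_\alpha$, which holds for any separable isonormal process. The one point needing care is identifying the $L^p$-limit of the series with the solution, which is immediate as explained above.
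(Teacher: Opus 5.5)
Your proposal is correct and follows the paper's proof essentially line by line. Both arguments apply hypercontractivity on each Wiener chaos together with~\eqref{eq:Ik-isometry}, \eqref{eq:semigroup-infinity-bound} and Lemma~\ref{lem:expansion-l2-estimates}, sum the geometric series for $\rho>\rho_c(p)$ with $C_{\rho,p}=\sum_{k\ge0}\delta_\rho(p)^{k/2}$, and then let $t\to\infty$ before sending $\rho\downarrow\rho_c(p)$. Your explicit identification of the $L^p(\Omega)$-limit of the series with the $L^2(\Omega)$ solution is a detail the paper leaves implicit.
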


\begin{proof}
As in~\cite[Section~3.3]{BCO24}, we apply the hypercontractivity of the Ornstein--Uhlenbeck semigroup~\cite[Section 1.4.3]{Nualart2006}: for every $\psi\in\cH^{\otimes k}$ one has $\|I_k(\psi)\|_{L^p(\Omega)}\le(p-1)^{k/2}\|I_k(\psi)\|_{L^2(\Omega)}$. Hence, by the triangle inequality, \eqref{eq:Ik-isometry}, \eqref{eq:semigroup-infinity-bound} and Lemma~\ref{lem:expansion-l2-estimates},
\begin{align*}
\norm{u(t,x)}_{L^p(\Omega)}&\le|P_tu_0(x)|+\sum_{k\ge1}(p-1)^{k/2}\sqrt{k!}\,\norm{f_k(\cdot;t,x)}_{\cH^{\otimes k}}\\
&\le\norm{u_0}_{L^\infty}\Bigl(1+e^{\rho t/2}\sum_{k\ge1}\bigl(C_\alpha(p-1)\beta^2\rho^{\frac{d_h}{d_w}-1-2\alpha}\bigr)^{k/2}\Bigr)\,,
\end{align*}
and the series converges if and only if $\rho>\rho_c(p)$. This proves the $L^p$-convergence of the chaos series and~\eqref{eq:Lp-bound} with $C_{\rho,p}=\sum_{k\ge0}(C_\alpha(p-1)\beta^2\rho^{\frac{d_h}{d_w}-1-2\alpha})^{k/2}$. Raising~\eqref{eq:Lp-bound} to the power $p$, taking logarithms, dividing by $t$, letting $t\to\infty$ and then $\rho\downarrow\rho_c(p)$ gives~\eqref{eq:Lp-limsup}.
\end{proof}

\subsection{H\"older regularity}\label{sec:holder}

We now prove that the solution is H\"older continuous and relate the H\"older regularity of the solution $u(t,x)$ to the geometry of $X$. The argument combines estimates for the space and time increments of the heat kernel with the Burkholder--Davis--Gundy inequality, adapting the Euclidean method of~\cite{ChenHuang2019}\footnote{The estimate~(3.2) in~\cite[Lemma~3.1]{ChenHuang2019} is not correct: in the notation of that paper, taking $x=0$, $t=1$ and $t'\to\infty$ gives a contradiction whenever $d>\alpha$. Lemma~\ref{lem:weighted-increments} below keeps heat kernels at both times and avoids this problem.}.

We formulate the estimates in terms of an exponent $\kappa\in(0,1]$ for which the heat semigroup satisfies the weak Bakry--\'Emery condition $\mathrm{wBE}(\kappa)$ of~\cite{Besov3}: there exists $C>0$ such that, for all $t>0$, $g\in L^\infty(X,\mu)$ and $x,y\in X$,
\begin{equation}\label{eq:wBE}
|P_tg(x)-P_tg(y)|\le C\frac{d(x,y)^\kappa}{t^{\kappa/d_w}}\norm{g}_{L^\infty}\,.
\end{equation}
Under Assumption~\ref{hyp:dirichlet-form}, Ahlfors regularity and the two-sided sub-Gaussian estimates imply a parabolic Harnack inequality and interior H\"older estimates for caloric functions~\cite[Theorem~3.1 and Corollary~4.2]{BarlowGrigoryanKumagai2012}. Applying these estimates to $(s,z)\mapsto P_sg(z)$ on cylinders of radius comparable to $t^{1/d_w}$, and using the $L^\infty$ contraction of $P_s$, yields~\eqref{eq:wBE} for some exponent $\kappa_0\in(0,1)$ depending only on the structural constants. This structural exponent however need not be optimal. 
The weak Bakry-\'Emery estimate ~\eqref{eq:wBE} is therefore always satisfied for some exponent in our setting. The additional information  is the specified value of $\kappa$, which may reflect finer geometric properties of $X$; see~\cite{Besov3}.  For instance, Condition~\eqref{eq:wBE} holds with $\kappa=1$ on complete Riemannian manifolds with nonnegative Ricci curvature. It also holds on nested fractals and Sierpi\'nski carpets; in particular, on the Sierpi\'nski gasket the optimal $\kappa=d_w-d_h$~\cite[Theorem~3.7 and Section~5.1]{Besov3}. On the Sierpi\'nski carpet the optimal $\kappa$ is not known.

Throughout this subsection, $\alpha$ satisfies~\eqref{eq:alpha-range}, and $u$ is the solution of~\eqref{eq:pam-D} given by Theorem~\ref{thm:exist-unique-wholespace}.

\subsubsection{H\"older estimates for the heat kernel}

\begin{lem}\label{lem:HK-holder-time-derivative}
\begin{enumerate}
\item For all $t>0$ and $x,y,z\in X$,
\begin{equation}\label{eq:HK-space-inc}
|p_t(x,z)-p_t(y,z)|\le C\frac{d(x,y)^\kappa}{t^{(\kappa+d_h)/d_w}}\,.
\end{equation}
\item For all $x,z\in X$, the map $t\mapsto p_t(x,z)$ is continuously differentiable on $(0,\infty)$ and
\begin{equation}\label{eq:HK-time-derivative}
|\partial_tp_t(x,z)|\le Ct^{-1-\frac{d_h}{d_w}}\,.
\end{equation}
\end{enumerate}
\end{lem}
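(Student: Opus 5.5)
For (i), the plan is to write the heat kernel as the semigroup applied to a bounded function and then invoke the weak Bakry--\'Emery condition~\eqref{eq:wBE}. By Chapman--Kolmogorov~\eqref{eq:chapman} and symmetry,
\[
p_t(x,z)=\int_X p_{t/2}(x,w)\,p_{t/2}(w,z)\,d\mu(w)=P_{t/2}g(x),\qquad g:=p_{t/2}(\cdot,z).
\]
The upper bound in~\eqref{eq:subGauss} gives $\|g\|_{L^\infty}\le Ct^{-d_h/d_w}$. Applying~\eqref{eq:wBE} at time $t/2$ to this $g$, with the points $x$ and $y$, yields
\[
|p_t(x,z)-p_t(y,z)|\le C\frac{d(x,y)^\kappa}{t^{\kappa/d_w}}\,t^{-d_h/d_w},
\]
which is~\eqref{eq:HK-space-inc}. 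Since $p_t$ is jointly continuous, this holds pointwise for every $x$ and $y$, and no almost-everywhere qualification is needed.

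For (ii), the plan is spectral analyticity of the semigroup. Write $p_t(x,z)=\langle p_{t/3}(x,\cdot),P_{t/3}p_{t/3}(z,\cdot)\rangle_{L^2}$, where $p_{s}(x,\cdot)\in L^2$ by~\eqref{eq:heatkernel-square-bound}. Because $P_t$ is a self-adjoint contraction semigroup, $t\mapsto P_t$ is analytic in operator norm on $(0,\infty)$ with $\|\Lap P_s\|_{L^2\to L^2}\le (es)^{-1}$. To get differentiability in $t$ with a fixed representation, fix $t_0>0$ and, for $t$ near $t_0$, write $p_t(x,z)=\langle p_{t_0/4}(x,\cdot),P_{t-t_0/2}\,p_{t_0/4}(z,\cdot)\rangle$. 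This shows $t\mapsto p_t(x,z)$ is $C^1$ (indeed real-analytic) on $(0,\infty)$, with
\[
\partial_tp_t(x,z)=\langle p_{t/4}(x,\cdot),\Lap P_{t/2}\,p_{t/4}(z,\cdot)\rangle.
\]
Cauchy--Schwarz together with~\eqref{eq:heatkernel-square-bound} then gives
\[
|\partial_tp_t(x,z)|\le \frac{C}{t}\,\|p_{t/4}(x,\cdot)\|_{L^2}\|p_{t/4}(z,\cdot)\|_{L^2}\le Ct^{-1}t^{-d_h/d_w}.
\]

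The main obstacle is justifying the pointwise identification: $P_s$ acting on $L^2$ versus the pointwise heat kernel integral. This is handled by the joint continuity of $p_t$ and by the fact that the pairing $\langle p_a(x,\cdot),P_b\,p_c(z,\cdot)\rangle$ equals $p_{a+b+c}(x,z)$ exactly for every $x$ and $z$, which follows from Chapman--Kolmogorov. The remaining steps are routine.
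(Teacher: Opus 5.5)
Your proof is correct and follows essentially the paper's route. The paper only cites the two ingredients you derive. For (i) it cites \cite[Lemma~3.4]{Besov3}, whose content is your factorization $p_t(\cdot,z)=P_{t/2}p_{t/2}(\cdot,z)$ combined with \eqref{eq:wBE} and $\|p_{t/2}(\cdot,z)\|_{L^\infty}\le Ct^{-d_h/d_w}$. For (ii) it cites \cite{GrigoryanTelcs2012} for the bound $|\partial_tp_t(x,z)|\le\frac{2}{t}\sqrt{p_{t/2}(x,x)p_{t/2}(z,z)}$. Your analytic-semigroup Cauchy--Schwarz argument reproves this bound, with constant $2/e$ in place of $2$, and obtains continuity of the derivative from analyticity just as the paper does.
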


\begin{proof}
(i) is \cite[Lemma~3.4]{Besov3}. For (ii), by \cite[Corollary~5.7 and
Remark~5.8]{GrigoryanTelcs2012}, $t\mapsto p_t(x,z)$ is differentiable on
$(0,\infty)$ for all $x,z\in X$, with
$|\partial_tp_t(x,z)|\le \frac2t\sqrt{p_{t/2}(x,x)\,p_{t/2}(z,z)}$; the upper
bound in~\eqref{eq:subGauss} then gives~\eqref{eq:HK-time-derivative}. Continuity of the derivative follows from
$\partial_tp_t(x,z)=\langle\Lap p_{t/2}(x,\cdot),p_{t/2}(z,\cdot)\rangle_{L^2}$
and the analyticity of the semigroup.
\end{proof}

The next lemma combines the uniform H\"older estimates with the off-diagonal decay in~\eqref{eq:subGauss}, retaining heat kernels at both times in the time-increment bound. 
\begin{lem}\label{lem:weighted-increments}
For every $\vartheta\in(0,1)$ there exist constants $C>0$ and $c_\vartheta\ge1$ such that:
\begin{enumerate}
\item for all $t>0$ and $x,y,z\in X$,
\begin{equation}\label{eq:weighted-space}
|p_t(x,z)-p_t(y,z)|\le C\Bigl(\frac{d(x,y)}{t^{1/d_w}}\Bigr)^{\kappa\vartheta}\bigl[p_{c_\vartheta t}(x,z)+p_{c_\vartheta t}(y,z)\bigr]\,;
\end{equation}
\item for all $0<s<t\le2s$ and $y,z\in X$,
\begin{equation}\label{eq:weighted-time}
|p_t(y,z)-p_s(y,z)|\le C\Bigl(\frac{t-s}{s}\Bigr)^{\vartheta}\bigl[p_{c_\vartheta t}(y,z)+p_{c_\vartheta s}(y,z)\bigr]\,.
\end{equation}
\end{enumerate}
\end{lem}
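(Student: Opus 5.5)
The idea is to interpolate, with exponent $\vartheta$, between the uniform increment bounds of Lemma~\ref{lem:HK-holder-time-derivative} and the trivial bound by the sum of the two kernels. The off-diagonal Gaussian factor is then absorbed into a heat kernel at a dilated time, using the two-sided estimate~\eqref{eq:subGauss}.

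\textbf{Space increments.} Write $D=|p_t(x,z)-p_t(y,z)|$ and $r=d(x,y)/t^{1/d_w}$.

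First suppose $r\ge1$. Then $D\le p_t(x,z)+p_t(y,z)$. For any $c_\vartheta\ge1$, the estimates~\eqref{eq:subGauss} give $p_t(x,z)\le C\,p_{c_\vartheta t}(x,z)$ once $c_\vartheta$ is large enough: the upper exponent at time $t$ has constant $c$, the lower exponent at time $c_\vartheta t$ has constant $C c_\vartheta^{-1/(d_w-1)}$, and $c_\vartheta$ is chosen so that the latter is $\le c$. The prefactor ratio $c_\vartheta^{d_h/d_w}$ is a constant. Since $r^{\kappa\vartheta}\ge1$, this proves~\eqref{eq:weighted-space} in this case.

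Now suppose $r<1$. Write $D=D^\vartheta D^{1-\vartheta}$. Bound $D^\vartheta$ by~\eqref{eq:HK-space-inc}, which gives $C r^{\kappa\vartheta} t^{-\vartheta d_h/d_w}$. Bound $D^{1-\vartheta}$ by $(p_t(x,z)+p_t(y,z))^{1-\vartheta}$. Since $d(x,y)<t^{1/d_w}$, the triangle inequality gives $(d(x,z)^{d_w}/t)^{1/(d_w-1)}\le C'(d(y,z)^{d_w}/t)^{1/(d_w-1)}+C'$, so both kernels are bounded by $C t^{-d_h/d_w}\exp(-c''(d(x,z)^{d_w}/t)^{1/(d_w-1)})$. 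Raising to the power $1-\vartheta$ gives
\[
D\le C r^{\kappa\vartheta}\, t^{-d_h/d_w}\exp\bigl(-c(1-\vartheta)(d(x,z)^{d_w}/t)^{1/(d_w-1)}\bigr).
\]
This is at most $C r^{\kappa\vartheta} p_{c_\vartheta t}(x,z)$ by the lower bound in~\eqref{eq:subGauss}, provided $c_\vartheta$ is chosen so that $C c_\vartheta^{-1/(d_w-1)}\le c(1-\vartheta)$. This is where the dependence of $c_\vartheta$ on $\vartheta$ enters; the constant blows up as $\vartheta\to1$.

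\textbf{Time increments.} Let $0<s<t\le2s$, so the times are comparable. By~\eqref{eq:HK-time-derivative} and the mean value theorem,
\[
|p_t-p_s|\le C(t-s)s^{-1-d_h/d_w}=C\,\tfrac{t-s}{s}\,s^{-d_h/d_w}.
\]
Interpolate exactly as before: $|p_t-p_s|^{1-\vartheta}\le (p_t+p_s)^{1-\vartheta}$, and both $p_t$ and $p_s$ are bounded by $C s^{-d_h/d_w}\exp(-c(d(y,z)^{d_w}/t)^{1/(d_w-1)})$ because $s\le t\le 2s$. The product is therefore bounded by
\[
C\bigl(\tfrac{t-s}{s}\bigr)^{\vartheta} s^{-d_h/d_w}\exp\bigl(-c(1-\vartheta)(d^{d_w}/t)^{1/(d_w-1)}\bigr)\le C\bigl(\tfrac{t-s}{s}\bigr)^{\vartheta}p_{c_\vartheta t}(y,z),
\]
with the same choice of $c_\vartheta$ as before. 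The extra term $p_{c_\vartheta s}$ in~\eqref{eq:weighted-time} is then harmless.

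\textbf{Main obstacle.} The only delicate step is the absorption of the Gaussian factor into a dilated heat kernel. This requires tracking the exponents in~\eqref{eq:subGauss} carefully, including the triangle-inequality loss when switching between $x$ and $y$, which is only available in the regime $d(x,y)<t^{1/d_w}$. That is exactly why the two cases are split.
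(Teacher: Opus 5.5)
Your proof is correct and follows the paper's route: you interpolate, with exponent $\vartheta$, the uniform increment bounds of Lemma~\ref{lem:HK-holder-time-derivative} against the trivial bound by the sum of the two kernels, and then absorb the sub-Gaussian factor into a heat kernel at a dilated time. The case split and triangle-inequality step you identify as the main obstacle are not actually needed, because the statement allows both kernels on the right-hand side: the paper applies $p_r(a,z)^{1-\vartheta}\le C r^{\vartheta d_h/d_w}p_{c_\vartheta r}(a,z)$ to each kernel separately and uses $(A+B)^{1-\vartheta}\le A^{1-\vartheta}+B^{1-\vartheta}$, whereas your merging of the two kernels into a single Gaussian factor (which is what forces the split at $d(x,y)=t^{1/d_w}$) only buys a single kernel on the right, and this is never needed later.
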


\begin{proof}
For every $c'>0$, the lower bound in~\eqref{eq:subGauss} gives constants $C>0$ and $c''\ge1$ such that, for all $t>0$ and $x,z\in X$,
\begin{equation}\label{eq:exp-to-kernel}
t^{-\frac{d_h}{d_w}}\exp\Bigl(-c'\Bigl(\frac{d(x,z)^{d_w}}{t}\Bigr)^{\frac1{d_w-1}}\Bigr)\le C p_{c''t}(x,z)\,.
\end{equation}
Indeed, a sufficiently large time dilation makes the exponential decay in the heat kernel lower bound slower than that on the left. Fix $\vartheta\in(0,1)$. Combining the upper bound in~\eqref{eq:subGauss} with~\eqref{eq:exp-to-kernel}, and choosing $c_\vartheta\ge1$ sufficiently large, gives, for all $r>0$ and $a,z\in X$,
\begin{equation*}
p_r(a,z)^{1-\vartheta}\le C r^{\vartheta d_h/d_w}p_{c_\vartheta r}(a,z)\,.
\end{equation*}
We use this comparison for each kernel separately, together with $(A+B)^{1-\vartheta}\le A^{1-\vartheta}+B^{1-\vartheta}$ for $A,B\ge0$.

(i) Since $|p_t(x,z)-p_t(y,z)|\le p_t(x,z)+p_t(y,z)$, interpolation with~\eqref{eq:HK-space-inc} yields
\begin{align*}
|p_t(x,z)-p_t(y,z)|
&\le |p_t(x,z)-p_t(y,z)|^\vartheta\bigl[p_t(x,z)^{1-\vartheta}+p_t(y,z)^{1-\vartheta}\bigr] \\
&\le C\Bigl(\frac{d(x,y)}{t^{1/d_w}}\Bigr)^{\kappa\vartheta}t^{-\vartheta d_h/d_w}\bigl[p_t(x,z)^{1-\vartheta}+p_t(y,z)^{1-\vartheta}\bigr]\,.
\end{align*}
Applying the preceding comparison with $r=t$ proves~\eqref{eq:weighted-space}.

(ii) Let $0<s<t\le2s$. By~\eqref{eq:HK-time-derivative},
\begin{equation*}
|p_t(y,z)-p_s(y,z)|\le\int_s^t|\partial_rp_r(y,z)|\,dr\le C\frac{t-s}{s}s^{-d_h/d_w}\,.
\end{equation*}
Interpolating this bound with $|p_t(y,z)-p_s(y,z)|\le p_t(y,z)+p_s(y,z)$ and applying the comparison at times $t$ and $s$, we obtain
\begin{align*}
|p_t(y,z)-p_s(y,z)|
&\le C\Bigl(\frac{t-s}{s}\Bigr)^\vartheta s^{-\vartheta d_h/d_w}\bigl[p_t(y,z)^{1-\vartheta}+p_s(y,z)^{1-\vartheta}\bigr] \\
&\le C\Bigl(\frac{t-s}{s}\Bigr)^\vartheta\Bigl[\Bigl(\frac ts\Bigr)^{\vartheta d_h/d_w}p_{c_\vartheta t}(y,z)+p_{c_\vartheta s}(y,z)\Bigr]\,.
\end{align*}
Since $t/s\le2$, the factor $(t/s)^{\vartheta d_h/d_w}$ is bounded, and~\eqref{eq:weighted-time} follows.
\end{proof}

We now translate Lemma~\ref{lem:weighted-increments} into $\cW^{-\alpha}$-bounds for the \emph{absolute} differences of the heat kernel.

\begin{cor}\label{cor:W-increments}
For every $\vartheta\in(0,1)$ there exists $C>0$ such that:
\begin{enumerate}
\item for all $t>0$ and $x,y\in X$, the function $|p_t(x,\cdot)-p_t(y,\cdot)|$ belongs to $\Lambda_\alpha$ and
\begin{equation}\label{eq:W-space}
\bigl\||p_t(x,\cdot)-p_t(y,\cdot)|\bigr\|^2_{\cW^{-\alpha}}\le C\,d(x,y)^{2\kappa\vartheta}\,t^{2\alpha-\frac{d_h}{d_w}-\frac{2\kappa\vartheta}{d_w}}\,;
\end{equation}
\item for all $0<s<t$ and $y\in X$, the function $|p_t(y,\cdot)-p_s(y,\cdot)|$ belongs to $\Lambda_\alpha$ and
\begin{equation}\label{eq:W-time}
\bigl\||p_t(y,\cdot)-p_s(y,\cdot)|\bigr\|^2_{\cW^{-\alpha}}\le C\,(t-s)^{2\vartheta}\,s^{2\alpha-\frac{d_h}{d_w}-2\vartheta}\,.
\end{equation}
\end{enumerate}
\end{cor}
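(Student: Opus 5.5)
The idea is to dominate each absolute difference pointwise by a sum of heat kernels times a scalar factor, using Lemma~\ref{lem:weighted-increments}. We then use monotonicity of the $\cW^{-\alpha}$ norm in nonnegative functions, which holds because $G_{2\alpha}\ge0$ (for $\alpha>0$) or trivially (for $\alpha=0$). Finally, the norm of a single heat kernel is bounded by Lemma~\ref{lem:sup-kernel-estimates}. Concretely, if $0\le g\le h$ pointwise and $h\in\Lambda_\alpha$, then
\[
\iint g(x)g(y)G_{2\alpha}(x,y)\le\iint h(x)h(y)G_{2\alpha}(x,y)<\infty,
\]
so $g\in\Lambda_\alpha$ and $\|g\|_{\cW^{-\alpha}}\le\|h\|_{\cW^{-\alpha}}$. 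Moreover, $\|p_r(x,\cdot)\|^2_{\cW^{-\alpha}}=\|(-\Lap)^{-\alpha}p_r(x,\cdot)\|_{L^2}^2\le Cr^{2\alpha-d_h/d_w}$ by~\eqref{eq:sup-kernel-1}; for $\alpha>0$ this is just~\eqref{eq:two-point-kernel} with $x=x'$. Heat kernels lie in $L^1\cap L^\infty\subset\Lambda_\alpha$, so membership is automatic.

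For (i), \eqref{eq:weighted-space} gives
\[
|p_t(x,\cdot)-p_t(y,\cdot)|\le C(d(x,y)t^{-1/d_w})^{\kappa\vartheta}\,[p_{c_\vartheta t}(x,\cdot)+p_{c_\vartheta t}(y,\cdot)].
\]
Apply monotonicity and the triangle inequality in $\cW^{-\alpha}$, then the single-kernel bound at time $c_\vartheta t$, which costs only a constant. Squaring yields~\eqref{eq:W-space}.

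For (ii), split into cases. When $s<t\le2s$, use~\eqref{eq:weighted-time} in the same way. This gives the factor $((t-s)/s)^{2\vartheta}$ times $(c_\vartheta t)^{2\alpha-d_h/d_w}+(c_\vartheta s)^{2\alpha-d_h/d_w}$. Since $2\alpha-d_h/d_w<0$ and $t>s$, both terms are $\le Cs^{2\alpha-d_h/d_w}$, which is~\eqref{eq:W-time}. When $t>2s$, bound $|p_t-p_s|\le p_t+p_s$. The norm squared is then $\le C(t^{2\alpha-d_h/d_w}+s^{2\alpha-d_h/d_w})\le Cs^{2\alpha-d_h/d_w}$. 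Since $t-s>s$, we have $((t-s)/s)^{2\vartheta}\ge1$, so this is again bounded by the right side of~\eqref{eq:W-time}.

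The main point requiring care is the monotonicity step, i.e.\ that positivity of $G_{2\alpha}$ makes the $\cW^{-\alpha}$ norm increasing on nonnegative functions. It also explains why the statement is formulated for absolute differences. The remaining steps are direct substitutions.
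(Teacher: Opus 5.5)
Your proposal is correct and follows the paper's proof essentially step for step. Both arguments dominate each difference pointwise via Lemma~\ref{lem:weighted-increments}, use monotonicity of the $\cW^{-\alpha}$ norm on nonnegative functions (from $G_{2\alpha}\ge0$), bound a single heat kernel by~\eqref{eq:sup-kernel-1}, and split (ii) into the same two cases $t\le2s$ and $t>2s$.
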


\begin{proof}
Since $p_t(x,\cdot)\in L^1(X,\mu)\cap L^\infty(X,\mu)$, Definition~\ref{def:Sobolev-space} and~\eqref{eq:sup-kernel-1} give
\begin{equation*}
\|p_t(x,\cdot)\|^2_{\cW^{-\alpha}}=\|(-\Lap)^{-\alpha}p_t(x,\cdot)\|^2_{L^2}\le Ct^{2\alpha-\frac{d_h}{d_w}},\qquad t>0,\ x\in X\,.
\end{equation*}
Also notice that if $0\le f\le F$ with $F\in\Lambda_\alpha$, then $f\in\Lambda_\alpha$ and $\|f\|_{\cW^{-\alpha}}\le\|F\|_{\cW^{-\alpha}}$. For $\alpha>0$ this follows from $G_{2\alpha}\ge0$ and Definition~\ref{def:Sobolev-space}; for $\alpha=0$ it is trivial.

\vspace{0.4cm}

(i) By~\eqref{eq:weighted-space}, $0\le|p_t(x,\cdot)-p_t(y,\cdot)|\le F:=C(d(x,y)t^{-1/d_w})^{\kappa\vartheta}[p_{c_\vartheta t}(x,\cdot)+p_{c_\vartheta t}(y,\cdot)]$, and $F\in L^1(X,\mu)\cap L^\infty(X,\mu)\subset\Lambda_\alpha$. Hence, by the triangle inequality in $\cW^{-\alpha}$,
\begin{align*}
\bigl\||p_t(x,\cdot)-p_t(y,\cdot)|\bigr\|_{\cW^{-\alpha}}
&\le C\Bigl(\frac{d(x,y)}{t^{1/d_w}}\Bigr)^{\kappa\vartheta}
\bigl(\|p_{c_\vartheta t}(x,\cdot)\|_{\cW^{-\alpha}}+\|p_{c_\vartheta t}(y,\cdot)\|_{\cW^{-\alpha}}\bigr)\\
&\le C\Bigl(\frac{d(x,y)}{t^{1/d_w}}\Bigr)^{\kappa\vartheta}t^{\alpha-\frac{d_h}{2d_w}}\,,
\end{align*}
which is~\eqref{eq:W-space}.

(ii) If $t\le2s$, \eqref{eq:weighted-time} and the same argument give
\begin{align*}
\bigl\||p_t(y,\cdot)-p_s(y,\cdot)|\bigr\|_{\cW^{-\alpha}}&\le C\Bigl(\frac{t-s}{s}\Bigr)^\vartheta\bigl(\|p_{c_\vartheta t}(y,\cdot)\|_{\cW^{-\alpha}}+\|p_{c_\vartheta s}(y,\cdot)\|_{\cW^{-\alpha}}\bigr)\\
&\le C(t-s)^\vartheta s^{-\vartheta}s^{\alpha-\frac{d_h}{2d_w}}\,,
\end{align*}
where we used $t\ge s$ and $2\alpha<d_h/d_w$. If $t>2s$, then $(t-s)/s>1$, and simply
\begin{equation*}
\bigl\||p_t(y,\cdot)-p_s(y,\cdot)|\bigr\|_{\cW^{-\alpha}}\le\|p_t(y,\cdot)\|_{\cW^{-\alpha}}+\|p_s(y,\cdot)\|_{\cW^{-\alpha}}\le Cs^{\alpha-\frac{d_h}{2d_w}}\le C\Bigl(\frac{t-s}{s}\Bigr)^\vartheta s^{\alpha-\frac{d_h}{2d_w}}\,.\qedhere
\end{equation*}
\end{proof}

\subsubsection{H\"older regularity of the solution}

We are now in position to prove the required H\"older continuity statement which is based on the metric-space Kolmogorov-Chentsov theorem~\cite[Theorem~1.1]{KraetschmerUrusov2023}.

\begin{thm}\label{thm:Holder}
Assume~\eqref{eq:wBE} and let $u$ be the solution of~\eqref{eq:pam-D} with $u_0\in L^\infty(X,\mu)$ and $\alpha$ satisfying~\eqref{eq:alpha-range}. Let $p\ge2$, $0<T<T'<\infty$,
\begin{equation*}
\vartheta_t\in\Bigl(0,\frac{(1+2\alpha)d_w-d_h}{2d_w}\Bigr)\qquad\text{and}\qquad\vartheta_x\in\Bigl(0,\min\Bigl\{\kappa,\frac{(1+2\alpha)d_w-d_h}{2}\Bigr\}\Bigr)\,.
\end{equation*}
There exists a constant $C$, depending on $p,T,T',\vartheta_t,\vartheta_x,\beta,\|u_0\|_{L^\infty}$, the structural constants and the constants in~\eqref{eq:wBE}, such that for all $s,t\in[T,T']$ and $x,y\in X$,
\begin{equation}\label{eq:increment-moments}
\EE\bigl[|u(t,x)-u(s,y)|^p\bigr]\le C\bigl(|t-s|^{\vartheta_tp}+d(x,y)^{\vartheta_xp}\bigr)\,.
\end{equation}
Consequently, $u$ admits a modification which is jointly continuous on $(0,\infty)\times X$ and which is, on every set $[T,T']\times B(x_0,R)$, H\"older continuous of any order $\gamma_t<((1+2\alpha)d_w-d_h)/(2d_w)$ in the time variable and of any order $\gamma_x<\min\{\kappa,((1+2\alpha)d_w-d_h)/2\}$ in the space variable.
\end{thm}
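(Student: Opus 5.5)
We split the increment into a spatial and a temporal part, $u(t,x)-u(s,y)=[u(t,x)-u(t,y)]+[u(t,y)-u(s,y)]$, and bound each by the mild formulation~\eqref{eq:mild}. The deterministic parts are handled by the semigroup. By~\eqref{eq:wBE}, $|P_tu_0(x)-P_tu_0(y)|\le C d(x,y)^\kappa T^{-\kappa/d_w}\|u_0\|_{L^\infty}$. For time, write $P_tu_0-P_su_0=\int_s^t\partial_rP_ru_0\,dr$. Lemma~\ref{lem:HK-holder-time-derivative}(ii) does not integrate in $z$, so we use analyticity of $P_t$ on $L^\infty$ instead: $\partial_rP_r u_0=\int_X \partial_r p_r(\cdot,z)u_0(z)\,d\mu(z)$ with $\|\partial_r p_r(x,\cdot)\|_{L^1}\le C/r$, which follows from the sub-Gaussian bounds by a standard argument. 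Alternatively, interpolating~\eqref{eq:weighted-time} and integrating in $z$ gives $|P_tu_0-P_su_0|\le C((t-s)/s)^{\vartheta}\|u_0\|_{L^\infty}$ for any $\vartheta<1$. Either route gives Lipschitz, hence $\vartheta_t$-H\"older, control on $[T,T']$.

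For the stochastic integrals we use BDG~\eqref{eq:BDG} together with Minkowski's inequality in $L^{p/2}(\Omega)$. For a deterministic kernel $K(r,\cdot)$, the quantity $\EE[(\int\|K(r,\cdot)u(r,\cdot)\|^2_{\cW^{-\alpha}}dr)^{p/2}]^{2/p}$ is bounded by $\int\|u(r,\cdot)|K(r,\cdot)|\|_{L^p(\Omega;\cW^{-\alpha})}^2dr$. Expanding the $\cW^{-\alpha}$ norm through $G_{2\alpha}\ge0$ (or the $L^2$ product when $\alpha=0$) and applying H\"older to $\EE[|u(r,z)u(r,z')|^{p/2}]$, this is at most $\sup_{r\le T',z}\|u(r,z)\|_{L^p}^2\int\||K(r,\cdot)|\|^2_{\cW^{-\alpha}}dr$. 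The supremum is finite by Proposition~\ref{prop:Lp-estimates}. Everything therefore reduces to deterministic integrals of squared $\cW^{-\alpha}$ norms of absolute kernel differences, which is exactly what Corollary~\ref{cor:W-increments} was designed for.

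For the spatial increment, take $K(r,\cdot)=p_{t-r}(x,\cdot)-p_{t-r}(y,\cdot)$. By~\eqref{eq:W-space}, $\int_0^t\||K\|\|^2dr\le Cd(x,y)^{2\kappa\vartheta}\int_0^t r^{2\alpha-d_h/d_w-2\kappa\vartheta/d_w}dr$. This integral is finite precisely when $2\kappa\vartheta<(1+2\alpha)d_w-d_h$. Given $\vartheta_x<\min\{\kappa,((1+2\alpha)d_w-d_h)/2\}$, we choose $\vartheta<1$ with $\kappa\vartheta=\vartheta_x$ (possible since $\vartheta_x<\kappa$), and the integrability condition then holds. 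When $d(x,y)$ is large we simply use the crude bound by the moments.

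For the temporal increment with $s<t$, split the stochastic integral into two pieces. The first is $\int_s^t\int p_{t-r}(y,z)u\,dW$, whose bound is $\int_0^{t-s}r^{2\alpha-d_h/d_w}dr=C(t-s)^{1+2\alpha-d_h/d_w}$. The second is $\int_0^s\int[p_{t-r}-p_{s-r}](y,z)u\,dW$, controlled by~\eqref{eq:W-time} as $C(t-s)^{2\vartheta}\int_0^s r^{2\alpha-d_h/d_w-2\vartheta}dr$, which is finite when $2\vartheta<1+2\alpha-d_h/d_w$. Taking $\vartheta=\vartheta_t$ yields $\EE|\cdot|^p\le C|t-s|^{\vartheta_tp}$.

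Having~\eqref{eq:increment-moments}, we equip $[T,T']\times X$ with the metric $\varrho((t,x),(s,y))=|t-s|^{\vartheta_t}+d(x,y)^{\vartheta_x}$. The ball $[T,T']\times B(x_0,R)$ then has finite metric dimension $d_w'$, computed from Ahlfors regularity as $1/\vartheta_t+d_h/\vartheta_x$. The moment bound reads $\EE|u(a)-u(b)|^p\le C\varrho(a,b)^p$, and taking $p$ large beats this dimension. The metric Kolmogorov--Chentsov theorem~\cite[Theorem~1.1]{KraetschmerUrusov2023} then gives a modification that is H\"older of order $1-\epsilon$ in $\varrho$. Since $\vartheta_t,\vartheta_x$ can be taken arbitrarily close to the thresholds, this covers all $\gamma_t,\gamma_x$ below them. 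A countable exhaustion over $T,T',R$ yields joint continuity on $(0,\infty)\times X$.

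The main obstacle is the BDG/Minkowski reduction in the colored case. There we must ensure that using absolute values of the kernel differences, together with $G_{2\alpha}\ge0$, legitimately bounds the random $\cW^{-\alpha}$ norm. This requires the pairing formula~\eqref{eq:H-inner-product} to apply to $u(r,\cdot)K(r,\cdot)$, i.e.\ membership in $\Lambda_\alpha$ almost surely, and we would justify it through Fubini and the absolute bounds.
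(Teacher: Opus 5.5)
Your proposal is correct and follows the paper's proof essentially step for step. Your spatial and temporal increments are the paper's $A_1$ and $A_2+A_3$, and you use the same BDG--Minkowski--H\"older reduction to $\int_0^{T'}\||h_\tau|\|_{\cW^{-\alpha}}^2\,d\tau$ via $\sup_{(\tau,z)}\|u(\tau,z)\|_{L^p(\Omega)}$, Corollary~\ref{cor:W-increments} with $\kappa\vartheta=\vartheta_x$ and $\vartheta=\vartheta_t$, and the metric Kolmogorov--Chentsov theorem on an exhaustion. One small imprecision: your interpolation route for $P_tu_0-P_su_0$, which is the one the paper uses, gives $\vartheta$-H\"older rather than Lipschitz control (together with contractivity when $t>2s$), but that is enough since $\vartheta_t<1/2$.
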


\begin{proof}
Without loss of generality we assume $s\le t$. The  bound~\eqref{eq:wBE}, together with $L^\infty$ contractivity for $d(x,y)>1$, gives $|P_tu_0(x)-P_tu_0(y)|\le Cd(x,y)^{\vartheta_x}$ on $[T,T']$. We then have
\begin{equation*}
|P_tu_0(x)-P_su_0(y)|\le|P_tu_0(x)-P_tu_0(y)|+|P_tu_0(y)-P_su_0(y)|\le C\bigl(d(x,y)^{\vartheta_x}+|t-s|^{\vartheta_t}\bigr)\,.
\end{equation*}
Indeed, for time increments with $t\le2s$, integrate~\eqref{eq:weighted-time} with $\vartheta=\vartheta_t$ against $|u_0|\,d\mu$ and use conservativeness; for $t>2s$, contractivity gives the same bound after enlarging the constant, since $(t-s)/s>1$.

For the increments of the solution, we have by~\eqref{eq:mild}, $u(t,x)-u(s,y)=P_tu_0(x)-P_su_0(y)+\beta\bigl(I(t,x)-I(s,y)\bigr)$ a.s.

It remains to estimate the stochastic integral, whose increment splits into a space increment of the kernel, a time increment of the kernel, and the integral over the short time interval $(s,t)$. Write $I(t,x)-I(s,y)=A_1+A_2+A_3$ with
\begin{align*}
A_1&=\int_0^t\!\!\int_X\bigl[p_{t-\tau}(x,z)-p_{t-\tau}(y,z)\bigr]u(\tau,z)\,W_\alpha(d\tau,dz),\\
A_2&=\int_0^s\!\!\int_X\bigl[p_{t-\tau}(y,z)-p_{s-\tau}(y,z)\bigr]u(\tau,z)\,W_\alpha(d\tau,dz),\\
A_3&=\int_s^t\!\!\int_Xp_{t-\tau}(y,z)u(\tau,z)\,W_\alpha(d\tau,dz)\,.
\end{align*}
Set
\begin{equation*}
K_{T'}:=\sup_{(\tau,z)\in(0,T']\times X}\|u(\tau,z)\|_{L^p(\Omega)}<\infty\,,
\end{equation*}
which is finite by Proposition~\ref{prop:Lp-estimates}. We first record the common estimate used for the three terms. Let $h_\tau(z)$ be a deterministic measurable function on $(0,T')\times X$ such that $\int_0^{T'}\||h_\tau|\|_{\cW^{-\alpha}}^2\,d\tau<\infty$. If $\alpha>0$, the covariance formula~\eqref{eq:H-inner-product}, Minkowski's inequality and H\"older's inequality give
\begin{align*}
\Bigl\|\|h_\tau u(\tau,\cdot)\|^2_{\cW^{-\alpha}}\Bigr\|_{L^{p/2}(\Omega)}
&\le\int_{X^2}|h_\tau(z)h_\tau(z')|G_{2\alpha}(z,z')\\
&\qquad\times\|u(\tau,z)\|_{L^p(\Omega)}\|u(\tau,z')\|_{L^p(\Omega)}\,d\mu(z)d\mu(z')\\
&\le K_{T'}^2\bigl\||h_\tau|\bigr\|^2_{\cW^{-\alpha}}\,.
\end{align*}
For $\alpha=0$, the same bound follows from~\eqref{eq:H0-inner-product} and
\begin{equation*}
\Bigl\|\|h_\tau u(\tau,\cdot)\|_{L^2(X,\mu)}^2\Bigr\|_{L^{p/2}(\Omega)}
\le\int_X|h_\tau(z)|^2\|u(\tau,z)\|_{L^p(\Omega)}^2\,d\mu(z)
\le K_{T'}^2\|h_\tau\|_{L^2(X,\mu)}^2\,.
\end{equation*}
In particular, $h_\tau u(\tau,\cdot)\in\Lambda_\alpha$ almost surely for almost every $\tau$, and its expected squared norm is integrable in time. Since $u$ is adapted, the stochastic integral is therefore well defined. As in~\cite[Proposition~6.7]{HamblyYang2026}, the Burkholder--Davis--Gundy inequality~\eqref{eq:BDG} and Minkowski's inequality now yield
\begin{align}
\left\|\int_0^{T'}\!\!\int_Xh_\tau(z)u(\tau,z)\,W_\alpha(d\tau,dz)\right\|_{L^p(\Omega)}^2
&\le C_p\int_0^{T'}\Bigl\|\|h_\tau u(\tau,\cdot)\|^2_{\cW^{-\alpha}}\Bigr\|_{L^{p/2}(\Omega)}\,d\tau\label{eq:holder-bdg-first}\\
&\le C_pK_{T'}^2\int_0^{T'}\bigl\||h_\tau|\bigr\|^2_{\cW^{-\alpha}}\,d\tau\,.\label{eq:holder-bdg-common}
\end{align}
We apply~\eqref{eq:holder-bdg-common} to the integrands defining $A_1,A_2,A_3$, extended by zero outside their respective time intervals. The following estimates also verify the required time integrability.

\emph{The term $A_1$.} Choose $\vartheta\in(0,1)$ with $\kappa\vartheta=\vartheta_x$; this is possible since $\vartheta_x<\kappa$, and then $\vartheta<((1+2\alpha)d_w-d_h)/(2\kappa)$ since $\vartheta_x<((1+2\alpha)d_w-d_h)/2$. By~\eqref{eq:W-space},
\begin{align*}
\EE\bigl[|A_1|^p\bigr]^{2/p}
&\le C\int_0^td(x,y)^{2\kappa\vartheta}(t-\tau)^{2\alpha-\frac{d_h}{d_w}-\frac{2\kappa\vartheta}{d_w}}d\tau\\
&=C\,d(x,y)^{2\vartheta_x}\int_0^tr^{2\alpha-\frac{d_h}{d_w}-\frac{2\vartheta_x}{d_w}}dr
\le C_{T'}\,d(x,y)^{2\vartheta_x}\,,
\end{align*}
the integral being finite because $1+2\alpha-\frac{d_h}{d_w}-2\vartheta_x/d_w>0$.

\emph{The term $A_2$.} By~\eqref{eq:W-time} with $\vartheta=\vartheta_t$,
\begin{align*}
\EE\bigl[|A_2|^p\bigr]^{2/p}&\le C\int_0^s(t-s)^{2\vartheta_t}(s-\tau)^{2\alpha-\frac{d_h}{d_w}-2\vartheta_t}d\tau\\
&=C\,(t-s)^{2\vartheta_t}\int_0^sr^{2\alpha-\frac{d_h}{d_w}-2\vartheta_t}dr\le C_{T'}\,(t-s)^{2\vartheta_t}\,,
\end{align*}
the integral being finite because $2\vartheta_t<1+2\alpha-\frac{d_h}{d_w}$.

\emph{The term $A_3$.} By the bound $\|p_{t-\tau}(y,\cdot)\|^2_{\cW^{-\alpha}}\le C(t-\tau)^{2\alpha-\frac{d_h}{d_w}}$ recalled in Corollary~\ref{cor:W-increments},
\begin{equation*}
\EE\bigl[|A_3|^p\bigr]^{2/p}\le C\int_s^t(t-\tau)^{2\alpha-\frac{d_h}{d_w}}d\tau=\frac{C}{1+2\alpha-\frac{d_h}{d_w}}(t-s)^{1+2\alpha-\frac{d_h}{d_w}}\le C_{T'}\,(t-s)^{2\vartheta_t}\,,
\end{equation*}
since $2\vartheta_t<1+2\alpha-\frac{d_h}{d_w}$ and $t-s\le T'$.

Collecting the bounds, raising to the power $p/2$ and using $(v_1+v_2+v_3)^{p}\le3^{p-1}(v_1^p+v_2^p+v_3^p)$ gives~\eqref{eq:increment-moments}.

For the modification, fix a compact cylinder $[T,T']\times\overline B(o,R)$ and admissible $\vartheta_t,\vartheta_x$. The metric
\begin{equation*}
\varrho\bigl((t,x),(s,y)\bigr)=|t-s|^{\vartheta_t}+d(x,y)^{\vartheta_x}
\end{equation*}
induces the product topology, and Ahlfors regularity bounds its covering numbers by $C\varepsilon^{-(1/\vartheta_t+d_h/\vartheta_x)}$. Estimate~\eqref{eq:increment-moments} holds for arbitrarily large $p$. Hence the metric-space Kolmogorov--Chentsov theorem~\cite[Theorem~1.1]{KraetschmerUrusov2023} gives a modification that is $\varrho$-H\"older of every fixed order $r<1-(1/\vartheta_t+d_h/\vartheta_x)/p$. Choosing $p$ large and $r$ close to $1$ yields every pair of orders strictly below $\vartheta_t,\vartheta_x$.

We then apply this construction to the countable exhaustion $[m^{-1},m]\times\overline B(o,m)$, $m\ge2$, and to countable sequences of admissible exponents approaching the stated limits, with $r\uparrow1$. The continuous modifications agree almost surely on countable dense subsets of their overlaps, hence everywhere on those overlaps and hence define a single modification with all the asserted local H\"older exponents.
\end{proof}

\begin{remark}
For $\alpha>0$, in the Euclidean case $d_w=2$, $\kappa=1$, the exponents of Theorem~\ref{thm:Holder} are $(2+4\alpha-d_h)/4$ in time and $(2+4\alpha-d_h)/2$ in space for the Riesz covariance $|x-y|^{4\alpha-d_h}$, $0<d_h-4\alpha<2$. These coincide with the exponents in~\cite[Theorem~1.8]{ChenHuang2019} for the equation driven by noise that is white in time and has Riesz spatial covariance. At $\alpha=0$, necessarily $d_h<d_w$, the corresponding exponents are $(d_w-d_h)/(2d_w)$ in time and $\min\{\kappa,(d_w-d_h)/2\}$ in space; for $X=\R$ these are $1/4$ and $1/2$. For compact p.c.f.\ fractals, space-time regularity was studied for the additive equation in~\cite{HamblyYang2018} and for nonlinear equations, including PAM, in~\cite{HamblyYang2026}. On standard nested fractals their white-noise exponents, expressed in the metric $d$, agree with $(d_w-d_h)/(2d_w)$ in time and $(d_w-d_h)/2$ in space. In the present theorem the space exponent is also limited by exponent $\kappa$.
\end{remark}

\section{Existence of the \texorpdfstring{$L^p$}{Lp} Lyapunov exponents}\label{sec:lyapunov}

We prove existence of the moment Lyapunov exponents for every real $p\ge1$, independently of the spatial point and of the nonzero nonnegative bounded initial condition. We first construct the fundamental solution by estimating its Wiener chaos kernels and establish the flow, time-reversal and moment properties used below. The proof of Theorem~\ref{thm:lyapunov-main} then has three steps. A heat kernel profile gives a pointwise moment quantity that is supermultiplicative after a fixed time shift. Supermultiplicativity then yields an exponential rate by standard results.  Time reversal and sub-Gaussian tails identify its growth rate with that for the constant initial condition at each spatial point. Finally, heat kernel comparisons identify the rates at different points and extend the conclusion to general nonzero nonnegative bounded initial conditions. The moment comparisons are adapted from~\cite[Lemma~5.1 and proof of Proposition~2.8]{BergerChongLacoin2023}, with heat kernel bounds in place of spatial translation invariance.

\vspace{0.4cm}

Throughout this section, $\alpha$ satisfies~\eqref{eq:alpha-range} and $\beta>0$. Unless a dependence is specified, constants may depend on $\alpha$, $\beta$, the moment order and the data in Assumption~\ref{hyp:dirichlet-form}. They are uniform in time and in the spatial variables.

\vspace{0.4cm}

The goal of the section is to prove the following theorem.

\begin{thm}\label{thm:lyapunov-main}
Let $u_0\in L^\infty(X,\mu)$ be nonnegative and nonzero. For every real $p\ge1$ and every $x\in X$, the finite limit
\begin{equation}\label{eq:mainLimit}
\lambda_p(\beta):=\lim_{t\to\infty}\frac1t\log\EE[u(t,x)^p]
\end{equation}
exists and is independent of $x$ and of the initial condition $u_0$.
\end{thm}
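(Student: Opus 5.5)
We follow the three-step outline announced at the start of this section. The first step constructs the fundamental solution $Z(s,y;t,x)$, meaning the solution started at time $s$ from $\delta_y$. Its chaos kernels are those of \eqref{eq:fk} with $P_{s_1}u_0(y_1)$ replaced by $p_{s_1-s}(y,y_1)$. We estimate them by the weighted recursion of Lemma~\ref{lem:expansion-l2-estimates}, with the normalisation $Z/p_{t-s}(x,y)$. For the dependence on $y$ we use the two-point covariance bound \eqref{eq:two-point-kernel} together with the heat kernel comparison \eqref{eq:exp-to-kernel}. This should give $\|Z(s,y;t,x)\|_{L^p}\le C e^{C(t-s)}p_{c(t-s)}(x,y)$. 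Exactly as in \cite{AlbertsKhaninQuastel2014}, we then establish three properties:
\begin{enumerate}
\item the flow identity $Z(s,y;t,x)=\int_X Z(s,y;r,z)Z(r,z;t,x)\,d\mu(z)$ and the representation $u(t,x)=\int_X Z(0,y;t,x)u_0(y)\,d\mu(y)$;
\item independence of increments over disjoint time intervals;
\item the time-reversal identity in law, $\{Z(0,y;t,x)\}\overset{d}{=}\{Z(0,x;t,y)\}$, coming from the symmetry of $p_t$ and the time-reversal invariance of white-in-time noise.
\end{enumerate}
By Theorem~\ref{thm:exist-unique-wholespace}, $Z\ge0$.

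For supermultiplicativity, fix $x$ and set $a(t):=\sup_{z}\EE[u_1(t,z)^p]$, where $u_1$ is the solution with $u_0\equiv1$. For the lower side we use a profile quantity: $b(t):=\inf_{z\in B(x,1)}\EE[(\int_X p_1(z,y)\,Z(0,y;t,\cdot)\ldots)^p]$. Concretely, set $g(t):=\EE[(\int Z(0,y;t,x)\,p_1(x_0,y)\,d\mu(y))^p]$ with a fixed base point. By the flow identity, independence, and the superadditivity of $v\mapsto \EE[v^p]$ for $p\ge1$, restricted to a ball, we get $g(t+s+1)\ge c\,g(t)g(s)$. To see this, restrict the intermediate integration to $z\in B(x,1)$, bound $\int_{B(x,1)} Z(t,z;t+1,\cdot)\,d\mu$ below using Jensen and conditional expectations, and use positivity together with the lower heat-kernel bound to compare $\1_{B(x,1)}$ with $p_1(x,\cdot)$. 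Fekete's lemma for $\log(c\,g(\cdot-1))$ then gives the existence of $\lim t^{-1}\log g(t)$. Finiteness follows from Proposition~\ref{prop:Lp-estimates}, and the limit is $>-\infty$ by Jensen's inequality, since $\EE[u]=P_tu_0>0$ has only polynomial decay.

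It remains to identify the limits. Time reversal turns $g(t)$ into the $p$-th moment of the solution started from the profile $p_1(x,\cdot)$ at the single point $x$, up to shifting. Comparing with $u_0\equiv1$ gives $\EE[u_1(t,x)^p]\ge g(t)$, since $p_1(x,\cdot)$ integrates to one; this is the reversed statement of Jensen. For the reverse inequality we split $\int Z(0,y;t,x)\,d\mu(y)$ into $d(x,y)\le Kt$ and the rest. On the far region, the moment bound on $Z$ from the first step and the sub-Gaussian tails (Lemma~\ref{lem:tail-exit-UB}) make the contribution $e^{-ct^{\ldots}}$-small relative to $e^{Ct}$ once $K$ is large. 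On the near region, we cover the ball $B(x,Kt)$ by polynomially many unit balls and compare each piece with a profile started at a different point. This requires uniformity of the rate in the base point. We get that uniformity from the heat kernel comparison $p_1(x,\cdot)\ge c\,\1_{B(x,1)}$ plus one extra unit of time: $g_{x'}(t+1)\ge c\,g_x(t)$ whenever $d(x,x')\le1$, chained along geodesics at linear cost that is $o(t)$ only if we go through a distance-independent argument. Here we would instead use that the Fekete constant is uniform, so that $\sup_{x}g_x$ and $\inf_x g_x$ have the same rate. This comparison is the main obstacle: without translation invariance, one must show that $\sup_z$ and $\inf_z$ over a ball of radius $Kt$ grow at the same exponential rate. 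I would attempt it by proving supermultiplicativity directly for $\inf_{z\in X} g_z(t)$, using the uniform lower bound $p_1(z,y)\ge c$ for $d(z,y)\le1$, and subadditivity for $\sup_z$ via the upper bounds. Then I would sandwich both between each other using the one-step comparisons. For general $u_0$, $u\le\|u_0\|_\infty u_1$ gives the upper bound. For the lower bound, $P_1u_0\ge c\,\1_{B(x_1,1)}$ for some $x_1$ by the lower heat kernel bound, so after one time unit $u$ dominates a profile, whose rate we have already shown equals $\lambda_p$. The case $p=1$ is trivial: $\EE u=P_tu_0$, so $\lambda_1=0$.
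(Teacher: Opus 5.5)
Your overall architecture matches the paper's: a fundamental solution with flow, independence and time reversal, a supermultiplicative profile quantity, comparison with $u_0\equiv1$, and then comparison of points and initial data. There is, however, a genuine gap in the identification step. This is the upper bound $\EE[u_1(t,x)^p]\le e^{(\ell+o(1))t}$ in terms of the rate $\ell$ of your profile quantity, which you flag as the main obstacle and leave open. After discarding the far region, your covering of $B(x,Kt)$ by unit balls produces off-diagonal quantities $\EE[(\int_Xp_1(z_i,y)Z(0,y;t,x)\,d\mu(y))^p]$ with $d(z_i,x)$ as large as $Kt$. You need bounds on these that are uniform in $i$ on the exponential scale, and none of your devices provides them:
\begin{itemize}
\item Chaining unit-distance comparisons over a distance of order $t$ costs a factor $e^{Ct}$ and a time shift of order $t$.
\item The supermultiplicativity constant for a quantity with profile centre $z_i$ and evaluation point $x$ decays like $\exp(-C\,d(z_i,x)^{d_w/(d_w-1)})$, so Fekete's lemma gives nothing uniform over such pairs.
\item The proposed sandwich of $\inf_zg_z$ and $\sup_zg_z$ concerns diagonal quantities, so even if carried out it would not control pieces whose profile centre is far from the evaluation point. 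Moreover, your only upper input, $\norm{Z}_{L^p(\Omega)}\le Ce^{Ct}p_{\sigma t}$, has the crude rate $C$ and yields no submultiplicativity for $\sup_zg_z$.
\end{itemize}

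The missing idea is a go-and-return comparison with the diagonal kernel at doubled time; this is where time reversal really enters. The paper composes at the midpoint, $\mathcal Z_{0,2t}(o,o)=\int_X\mathcal Z_{t,2t}(o,z)\mathcal Z_{0,t}(z,o)\,d\mu(z)$. By independence, stationarity and time reversal, the two factors are independent copies $F,F'$ of the random function $z\mapsto\mathcal Z_{0,t}(o,z)$. For such copies, $\norm{\int_DFF'\,d\mu}_{L^p(\Omega)}\ge\mu(D)^{-1}\norm{\int_DF\,d\mu}_{L^p(\Omega)}^2$: pair with $M^{p-1}M'^{p-1}$, where $M=\int_DF\,d\mu$, then use Cauchy--Schwarz in $z$ and H\"older. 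Taking $D=B(o,rt)$, whose measure is polynomial in $t$, gives $\EE[(U_t\1_D(o))^p]\le\mu(D)^{p/2}\EE[\mathcal Z_{0,2t}(o,o)^p]^{1/2}$. The diagonal kernel is then bounded by the profile quantity, $\EE[\mathcal Z_{0,T+1}(o,o)^p]\le C_0^pa(T)$, via the upper smoothing estimate $\norm{U_1h(o)}_{L^p(\Omega)}\le C_0P_\sigma h(o)$, which follows from $\norm{\mathcal Z_{0,1}(z,o)}_{L^p(\Omega)}\le C_0p_\sigma(z,o)$ and Minkowski. No uniformity in the base point is ever needed.

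Your covering could be repaired in the same spirit. For a long middle interval $L$ one has $p_L(y',y)\ge c\,p_1(z,y')p_1(z,y)$ uniformly in $z$. Jensen, independence and time reversal then bound the $p$-th moment of each off-diagonal piece by $C\,\EE[Z(0,x;2t+L,x)^p]^{1/2}$.

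The upper smoothing estimate is also missing from your supermultiplicativity sketch. That sketch additionally needs a long middle interval, because with a unit interval and time-$1$ profiles the required minorization by products of profiles fails already on $\R^d$. It also needs the profile centre to coincide with the evaluation point for the recursion to close.

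Two smaller points:
\begin{itemize}
\item $g(t)\le\norm{p_1(x_0,\cdot)}_{L^\infty}^p\EE[u_1(t,x)^p]$ holds because $p_1$ is bounded, not because it integrates to one.
\item For general $u_0$ you need $P_\eta u_0\ge c\,p_\sigma(o,\cdot)$ for large $\eta$, not merely domination of an indicator.
\end{itemize}
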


\subsection{Chaos estimates for the fundamental solution}\label{sec:regularization}

For  times $0\le s<t$ and bounded measurable $f$, let $U_{s,t}f(x)$ denote the solution of~\eqref{eq:pam-D} at $(t,x)$ with initial condition $f$ at time $s$, driven by the noise on $(s,t)$. Write $U_t=U_{0,t}$ and let $\mathcal G_{s,t}$ be the $\sigma$-field generated by the noise on that interval $(s,t)$. Thus $u(t,x)=U_tu_0(x)$. 

As a preliminary observation, note that Proposition~\ref{prop:Lp-estimates}, together with monotonicity of $L^q(\Omega)$ norms, gives, for every finite $q\ge1$,
\begin{equation}\label{eq:solution-moment-bound}
\norm{U_tf(x)}_{L^q(\Omega)}\le Ce^{Ct}\norm{f}_{L^\infty},\qquad t>0,\quad x\in X\,.
\end{equation}

For $k\ge1$ and $t>0$, define $g_k$ on the ordered simplex $0<a_1<\cdots<a_k<t$ by
\begin{equation}\label{eq:flow-chaos-kernel}
g_k(a,z;t,x,y)=\frac{\beta^k}{k!}\,p_{t-a_k}(x,z_k)
\prod_{i=1}^{k-1}p_{a_{i+1}-a_i}(z_{i+1},z_i)\,p_{a_1}(z_1,y)\,.
\end{equation}
Extend $g_k$ by symmetry in the pairs $(a_i,z_i)$ and set it equal to zero outside $(0,t)^k$ in the time variables. The key lemma that yields the existence of the fundamental solution for the PAM model is the following.

\begin{lem}\label{lem:kernel-chaos-estimate}
There exists $\sigma\ge1$, such that, for every  $q\ge1$, $x,y \in X$ and $t>0$,
\begin{equation}\label{eq:flow-chaos-summability}
p_t(x,y)+\sum_{k\ge1}\norm{I_k(g_k(\cdot;t,x,y))}_{L^q(\Omega)}
\le Ce^{Ct}p_{\sigma t}(x,y)\,.
\end{equation}
Here $C$ may also depend on $q$ and $\beta$.  We keep the notation $\sigma$ fixed throughout the section.
\end{lem}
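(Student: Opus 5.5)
The plan is to reduce to second moments via hypercontractivity, exactly as in Proposition~\ref{prop:Lp-estimates}. The bound $\|I_k(\psi)\|_{L^q(\Omega)}\le (q-1)^{k/2}\|I_k(\psi)\|_{L^2(\Omega)}$, valid for $q\ge2$ (the case $q<2$ follows by monotonicity of $L^q$ norms), reduces \eqref{eq:flow-chaos-summability} to a weighted version of Lemma~\ref{lem:expansion-l2-estimates}. Concretely, we want
\[
k!\,\|g_k(\cdot;t,x,y)\|^2_{\cH^{\otimes k}}\le \bigl(C\beta^2\rho^{-\theta}\bigr)^k e^{\rho t}\,q_t(x,y)^2,\qquad \theta:=1+2\alpha-\tfrac{d_h}{d_w}\in(0,1),
\]
where $q_t(x,y):=t^{-d_h/d_w}\exp\bigl(-c_*(d(x,y)^{d_w}/t)^{1/(d_w-1)}\bigr)$ is a sub-Gaussian profile with a fixed exponent $c_*\le c/2$, and $c$ is the constant in the upper bound of \eqref{eq:subGauss}. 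Given this, we choose $\rho$ so large that $(q-1)C\beta^2\rho^{-\theta}<1/4$. The series then sums to at most $Ce^{\rho t/2}q_t(x,y)$. Finally, \eqref{eq:exp-to-kernel} gives $q_t\le Cp_{\sigma t}$ for some $\sigma\ge1$ depending only on $c_*$, so $\sigma$ is independent of $q$, $\beta$ and $k$. The term $p_t(x,y)$ is trivially bounded by $Cq_t(x,y)$.

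To prove the weighted bound, I would run the recursion from the proof of Lemma~\ref{lem:expansion-l2-estimates}. Set $v_0(t,x)=p_t(x,y)$ and $v_{k+1}(t,x)=\beta\int_0^t\!\int_X p_{t-s}(x,z)v_k(s,z)\,W_\alpha(ds,dz)$, so that $v_k=I_k(g_k)$. Define the weighted quantity
\[
M_k(t):=\sup_{x}\frac{\EE[v_k(t,x)^2]}{q_t(x,y)^2}.
\]
The isometry and Cauchy--Schwarz give
\[
\EE[v_{k+1}(t,x)^2]\le\beta^2\int_0^tM_k(s)\iint p_{t-s}(x,z)q_s(z,y)\,p_{t-s}(x,z')q_s(z',y)\,G_{2\alpha}(z,z')\,d\mu\,d\mu\,ds,
\]
with the obvious diagonal version when $\alpha=0$. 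The core pointwise inequality is the following. Write $\gamma=1/(d_w-1)$, so that $d_w\gamma=d_w/(d_w-1)$. The map $(r,u)\mapsto (a+b)^{d_w/(d_w-1)}$ versus $a^{d_w/(d_w-1)}r^{-\gamma}+b^{d_w/(d_w-1)}u^{-\gamma}$ satisfies
\[
\Bigl(\tfrac{d(x,y)^{d_w}}{r+u}\Bigr)^{\gamma}\le\Bigl(\tfrac{d(x,z)^{d_w}}{r}\Bigr)^{\gamma}+\Bigl(\tfrac{d(z,y)^{d_w}}{u}\Bigr)^{\gamma}.
\]
This follows from the triangle inequality and convexity (Hölder with exponents $d_w/(d_w-1)$ and $d_w$). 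Splitting the exponent $c$ of $p_{t-s}(x,z)$ as $c_*+(c-c_*)$, we therefore get
\[
p_{t-s}(x,z)\,q_s(z,y)\le C\,(t-s)^{-d_h/d_w}s^{-d_h/d_w}\,e^{-c_*(d(x,y)^{d_w}/t)^\gamma}\,e^{-(c-c_*)(d(x,z)^{d_w}/(t-s))^\gamma}.
\]
The point is that the weight exponent $c_*$ reproduces itself exactly, which is why $\sigma$ does not deteriorate along the iteration.

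For the prefactors I would split into two cases. If $s\ge t/2$, replace $s^{-d_h/d_w}$ by $Ct^{-d_h/d_w}$. The remaining factor in $z$ is bounded by $Cp_{c'(t-s)}(x,z)$ via \eqref{eq:exp-to-kernel}, and \eqref{eq:two-point-kernel} gives a contribution $C(t-s)^{\theta-1}q_t(x,y)^2$. If $s<t/2$, do the symmetric split instead: give the residual exponent to the $(z,y)$ factor and bound $(t-s)^{-d_h/d_w}$ by $Ct^{-d_h/d_w}$. This gives a contribution $Cs^{\theta-1}q_t(x,y)^2$. Altogether we obtain
\[
M_{k+1}(t)\le C\beta^2\int_0^t\bigl[(t-s)^{\theta-1}+s^{\theta-1}\1_{s<t/2}\bigr]M_k(s)\,ds.
\]
Induction with $M_k(s)\le A_ke^{\rho s}$ then closes the argument. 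The first term gives $\Gamma(\theta)\rho^{-\theta}e^{\rho t}$, as before. For the second term, $\int_0^{t/2}s^{\theta-1}e^{\rho s}\,ds\le \theta^{-1}t^\theta e^{\rho t/2}\le C\rho^{-\theta}e^{\rho t}$, using $\sup_t t^\theta e^{-\rho t/2}=C\rho^{-\theta}$. The base case $M_0\le C$ holds because $p_t\le Cq_t$ once $c_*\le c$.

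I expect the main obstacle to be this pointwise "triangle" step with a uniform exponent: one must keep the weight exponent fixed while retaining enough residual decay in $z$ to apply \eqref{eq:two-point-kernel}. The case $s<t/2$ is a second sticking point, since there the small-time singularity sits on the weight side and has to be absorbed through the $s^{\theta-1}$ term. If the convexity inequality above fails in the form stated, I would fall back on the cruder bound $d(x,y)^{d_w\gamma}\le 2^{d_w\gamma}\max\bigl(d(x,z),d(z,y)\bigr)^{d_w\gamma}$, and compensate by enlarging $\sigma$ once at the start. This still yields a fixed $\sigma$, since the weight is then only required to reproduce itself up to the factor absorbed in the splitting $c_*\le c/2^{d_w\gamma+1}$.
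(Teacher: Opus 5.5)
Your route differs from the paper's, and most of it is sound. The following steps are all fine:
\begin{itemize}
\item the hypercontractive reduction and the base case;
\item the case $s\ge t/2$, where the profile exponent $c_*$ reproduces itself and the leftover $(c-c_*)A$ gives decay of $z$ around $x$ at scale $t-s$;
\item the induction $M_k(s)\le A_ke^{\rho s}$;
\item the final comparison $q_t\le Cp_{\sigma t}$ via \eqref{eq:exp-to-kernel}.
\end{itemize}
Your two-point inequality is also correct: it is the paper's chain H\"older inequality with two links, so the fallback is not needed for that purpose.

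The paper proceeds differently. It runs an \emph{unweighted} recursion for $\sup_{x,y}k!\,\norm{g_k(\cdot;t,x,y)}^2_{\cH_\alpha^{\otimes k}}$, splitting the ordered simplex into $\{a_{k+1}\ge t/2\}$ and $\{a_1\le t/2\}$ and handling the second region by time reversal. Only afterwards does it extract the off-diagonal decay, once for the whole chain, by replacing every $p_r$ with $Cp_{\sigma r}e^{-c(\cdot)}$ and absorbing $C^{2k+2}$ into $\rho$. That separation is exactly what lets it avoid the difficulty below.

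\textbf{The gap: the case $s<t/2$.} The step fails as written. Set $A=(d(x,z)^{d_w}/(t-s))^{\gamma}$, $B=(d(z,y)^{d_w}/s)^{\gamma}$ and $D=(d(x,y)^{d_w}/t)^{\gamma}$. The exponent available in $p_{t-s}(x,z)\,q_s(z,y)$ is $cA+c_*B$, and the $(z,y)$ factor carries exactly $c_*$, so the split is not symmetric. After paying $c_*D\le c_*A+c_*B$, the only guaranteed leftover is $(c-c_*)A$. Equality holds for $z$ on a geodesic from $x$ to $y$ with $d(x,z)/(t-s)=d(z,y)/s$. There is therefore no residual exponent to give to the $(z,y)$ factor.

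If you use the leftover in $d(x,z)$ instead, the $G_{2\alpha}$-integral yields $s^{-2d_h/d_w}(t-s)^{\theta-1}$ rather than $t^{-2d_h/d_w}s^{\theta-1}$, where $\theta=1+2\alpha-d_h/d_w$. This is an extra factor $(t/s)^{2d_h/d_w}$, which is not integrable at $s=0$ when $2d_h\ge d_w$ (e.g.\ on the gasket with $\alpha=0$). Your fallback does not repair this with the threshold at $t/2$: in the subcase $d(z,y)\ge d(x,z)$ it only gives $c_*D\le 2c_*B$.

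\textbf{The fix.} The missing idea is to create slack from $c>c_*$ and $t-s>s$. Apply the same H\"older inequality with the unequal split $t=(t-\tfrac32 s)+\tfrac32 s$. Since $(t-s)/(t-\tfrac32 s)\le 2$ for $s<t/2$, this gives $D\le 2^{\gamma}A+(2/3)^{\gamma}B$. With $c_*\le c/2$ and $\gamma=1/(d_w-1)\le1$, you get $cA+c_*B\ge c_*D+(1-(2/3)^{\gamma})c_*B$. This residual decay of $z$ around $y$ at scale $s$ gives $s^{-d_h/d_w}e^{-(1-(2/3)^{\gamma})c_*B}\le Cp_{\sigma' s}(z,y)$, and hence the claimed contribution $Cs^{\theta-1}q_t(x,y)^2$.

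Alternatively, lower the threshold to $s<\varepsilon t$ with $\varepsilon$ small; there even your crude max bound leaves slack, and the range $s\ge\varepsilon t$ is handled like your first case at the cost of a factor $\varepsilon^{-d_h/d_w}$. With either repair, your weighted recursion closes and proves the lemma, with $\sigma$ depending only on $c_*$ and the structural constants.
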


\begin{proof}
We first obtain a uniform estimate by induction on the chaos order. Put $g_0(\cdot;t,x,y)=p_t(x,y)$ and interpret $\cH_\alpha^{\otimes0}$ as $\R$. For this proof, set
\begin{equation*}
M_k(t):=\sup_{x,y\in X}k!\,\norm{g_k(\cdot;t,x,y)}_{\cH_\alpha^{\otimes k}}^2,\qquad k\ge0\,.
\end{equation*}
The heat kernel upper bound gives $M_0(t)\le Ct^{-2d_h/d_w}$. We claim that
\begin{equation}\label{eq:flow-chaos-recursion}
M_{k+1}(t)\le C\beta^2\int_0^{t/2}r^{2\alpha-d_h/d_w}M_k(t-r)\,dr\,.
\end{equation}
For $\alpha>0$, symmetry in the time-space variables and the covariance formula give
\begin{align*}
\|g_{k+1}(\cdot;t,x,y)\|_{\cH_\alpha^{\otimes(k+1)}}^2
&=(k+1)!\int_{0<a_1<\cdots<a_{k+1}<t}
\int_{X^{2(k+1)}}g_{k+1}(a,z;t,x,y)g_{k+1}(a,z';t,x,y)\\
&\qquad\times\prod_{i=1}^{k+1}G_{2\alpha}(z_i,z_i')
\,d\mu^{\otimes(k+1)}(z)\,d\mu^{\otimes(k+1)}(z')\,da\,.
\end{align*}
The integrand is nonnegative because the heat kernels and $G_{2\alpha}$ are nonnegative. Moreover, on the ordered time simplex,
\begin{equation*}
1\le
\mathbf{1}_{\{a_{k+1}\ge t/2\}}
+\mathbf{1}_{\{a_1\le t/2\}}\,.
\end{equation*}
Inserting this inequality into the preceding integral bounds the squared norm by the sum of the same integral restricted respectively to $\{a_{k+1}\ge t/2\}$ and $\{a_1\le t/2\}$. These regions may overlap; their intersection is then counted twice, which is harmless for an upper bound.


We now multiply the squared-norm identity by $(k+1)!$. On the first region, put $r=t-a_{k+1}$. Integrating over the remaining $k$ times and their spatial variables gives the following contribution to $(k+1)!\,\|g_{k+1}(\cdot;t,x,y)\|_{\cH_\alpha^{\otimes(k+1)}}^2$:
\begin{align*}
&\beta^2\int_0^{t/2}\int_{X^2}p_r(x,z)p_r(x,z')G_{2\alpha}(z,z')\\
&\qquad\times k!\,\bigl\langle g_k(\cdot;t-r,z,y),g_k(\cdot;t-r,z',y)\bigr\rangle_{\cH_\alpha^{\otimes k}}
\,d\mu(z)d\mu(z')\,dr\,.
\end{align*}
By Cauchy--Schwarz inequality,
\begin{equation*}
k!\,\bigl|\bigl\langle g_k(\cdot;t-r,z,y),g_k(\cdot;t-r,z',y)\bigr\rangle_{\cH_\alpha^{\otimes k}}\bigr|
\le M_k(t-r)\,.
\end{equation*}
Thus~\eqref{eq:two-point-kernel} bounds this contribution by
\begin{align*}
&\beta^2\int_0^{t/2}M_k(t-r)
\int_{X^2}p_r(x,z)p_r(x,z')G_{2\alpha}(z,z')\,d\mu(z)d\mu(z')\,dr\\
&\qquad\le C\beta^2\int_0^{t/2}r^{2\alpha-d_h/d_w}M_k(t-r)\,dr\,.
\end{align*}
Reversing time and both chains of heat kernels, and interchanging $x,y$, transforms the second region into the first. It therefore satisfies the same bound, proving~\eqref{eq:flow-chaos-recursion}. When $\alpha=0$, each primed spatial variable coincides with its unprimed counterpart. A similar argument which uses
\begin{equation*}
\int_Xp_r(x,z)^2\,d\mu(z)=p_{2r}(x,x)\le Cr^{-d_h/d_w}
\end{equation*}
again gives~\eqref{eq:flow-chaos-recursion}.

Now, fix $\rho>0$. Multiplying~\eqref{eq:flow-chaos-recursion} by $t^{2d_h/d_w}e^{-\rho t}$ and using $t-r\ge t/2$, we obtain
\begin{align*}
\sup_{t>0}t^{2d_h/d_w}e^{-\rho t}M_{k+1}(t)
&\le C\beta^2\left(\int_0^\infty e^{-\rho r}r^{2\alpha-d_h/d_w}\,dr\right)
\sup_{s>0}s^{2d_h/d_w}e^{-\rho s}M_k(s)\\
&\le C\beta^2\rho^{d_h/d_w-1-2\alpha}
\sup_{s>0}s^{2d_h/d_w}e^{-\rho s}M_k(s)\,.
\end{align*}
 Starting from the bound for $M_0$, an induction yields
\begin{equation*}
M_k(t)\le Ct^{-2d_h/d_w}e^{\rho t}
\left(C\beta^2\rho^{d_h/d_w-1-2\alpha}\right)^k\,.
\end{equation*}

We next handle the off-diagonal  sub-Gaussian decay. By~\eqref{eq:subGauss}, we may fix $c>0$ sufficiently small and $\sigma\ge1$ sufficiently large that
\begin{equation}\label{harck:local}
p_r(a,b)\le Cp_{\sigma r}(a,b)
\exp\left[-c\left(\frac{d(a,b)^{d_w}}r\right)^{1/(d_w-1)}\right]
\qquad(r>0,\ a,b\in X)\,.
\end{equation}
Choose the same $\sigma$ so that~\eqref{eq:exp-to-kernel} holds with $c'=c$ and $c''=\sigma$. For  $r_0,\ldots,r_k>0$ with sum $t$ and a chain $z_0=y, \cdots, z_{k+1}=x$, H\"older's inequality gives
\begin{align*}
d(x,y)&\le\sum_{i=0}^k\frac{d(z_{i+1},z_i)}{r_i^{1/d_w}}\,r_i^{1/d_w}\\
&\le\left[\sum_{i=0}^k
\left(\frac{d(z_{i+1},z_i)^{d_w}}{r_i}\right)^{1/(d_w-1)}\right]^{(d_w-1)/d_w}t^{1/d_w}\,.
\end{align*}
Consequently,
\begin{equation*}
\sum_{i=0}^k\left(\frac{d(z_{i+1},z_i)^{d_w}}{r_i}\right)^{1/(d_w-1)}
\ge\left(\frac{d(x,y)^{d_w}}t\right)^{1/(d_w-1)}\,.
\end{equation*}
Let $\widetilde g_k$ be obtained from~\eqref{eq:flow-chaos-kernel} by replacing every $p_r$ by $p_{\sigma r}$. Using \eqref{harck:local} gives
\begin{equation*}
k!\,\norm{g_k}_{\cH_\alpha^{\otimes k}}^2
\le C^{2k+2}\exp\left[-2c\left(\frac{d(x,y)^{d_w}}t\right)^{1/(d_w-1)}\right]
k!\,\norm{\widetilde g_k}_{\cH_\alpha^{\otimes k}}^2\,.
\end{equation*}
The preceding induction also applies with $p_r$ replaced by $p_{\sigma r}$.
%
Absorbing $C^{2k+2}$ into the constants therefore yields
\begin{equation}\label{eq:flow-chaos-L2}
\begin{aligned}
k!\,\norm{g_k(\cdot;t,x,y)}_{\cH_\alpha^{\otimes k}}^2
&\le Ct^{-2d_h/d_w}e^{\rho t}
\left(C\beta^2\rho^{d_h/d_w-1-2\alpha}\right)^k\\
&\qquad\times\exp\left[-2c\left(\frac{d(x,y)^{d_w}}t\right)^{1/(d_w-1)}\right]\,.
\end{aligned}
\end{equation}

As before, for $q\ge2$, hypercontractivity on the $k$-th Wiener chaos gives
\begin{equation*}
\norm{I_k(g_k)}_{L^q(\Omega)}\le(q-1)^{k/2}\sqrt{k!}\,\norm{g_k}_{\cH_\alpha^{\otimes k}}.
\end{equation*}
 Since $d_h/d_w-1-2\alpha<0$, choose $\rho>0$ so that $C\beta^2(q-1)\rho^{d_h/d_w-1-2\alpha}\le1/4$. Taking square roots in~\eqref{eq:flow-chaos-L2} and summing, we obtain
\begin{align*}
\sum_{k\ge1}\norm{I_k(g_k)}_{L^q(\Omega)}
&\le Ct^{-d_h/d_w}e^{\rho t/2}
\exp\left[-c\left(\frac{d(x,y)^{d_w}}t\right)^{1/(d_w-1)}\right]\sum_{k\ge1}2^{-k}\\
&\le Ce^{\rho t/2}p_{\sigma t}(x,y)\,.
\end{align*}
The heat kernel comparison also bounds $p_t(x,y)$ by $Cp_{\sigma t}(x,y)$, proving~\eqref{eq:flow-chaos-summability}. The range $1\le q<2$ follows from the case $q=2$ by monotonicity of $L^q(\Omega)$ norms.
\end{proof}

\subsection{The fundamental solution and its flow properties}\label{sec:fundamental-solution}

We now prove existence of the fundamental solution and  establish the flow properties used in the proof of Theorem~\ref{thm:lyapunov-main}. Their one-dimensional counterparts are stated in~\cite[Theorem~3.1]{AlbertsKhaninQuastel2014}.

\begin{thm}\label{lem:regularized-flow}
For every  $0\le s<t$, the solution operator $U_{s,t}$ has the following properties.
\begin{enumerate}
\item There is a nonnegative random kernel $\mathcal{Z}_{s,t}(x,y)$, jointly measurable in $(\omega,x,y)$ and $\mathcal G_{s,t}$-measurable in $\omega$, such that, for every bounded nonnegative measurable $f$ and every $x\in X$,
\begin{equation}\label{eq:flow-kernel}
U_{s,t}f(x)=\int_X\mathcal{Z}_{s,t}(x,y)f(y)\,d\mu(y)\quad\text{a.s.}
\end{equation}
The same formula defines $U_{s,t}f$ for every nonnegative measurable $f$, with values in $[0,\infty]$, and satisfies
\begin{equation}\label{eq:flow-mean}
\EE[U_{s,t}f(x)]=P_{t-s}f(x)\,.
\end{equation}
\item For every finite family of pairwise disjoint deterministic intervals $(s_j,t_j)$, the kernels $\mathcal{Z}_{s_j,t_j}$ are independent. Moreover, $\mathcal{Z}_{s,t}$ has the law of $\mathcal{Z}_{0,t-s}$, and hence $U_{s,t}$ has the law of $U_{t-s}$.
\item For deterministic times $0\le r<s<t$, every nonnegative measurable $f$ and each fixed $x\in X$,
\begin{equation}\label{eq:flow-composition}
U_{r,t}f(x)=U_{s,t}(U_{r,s}f)(x)\quad\text{a.s.}
\end{equation}
For every fixed pair $(x,y)\in X^2$, almost surely,
\begin{equation}\label{eq:flow-kernel-composition}
\mathcal{Z}_{r,t}(x,y)=\int_X\mathcal{Z}_{s,t}(x,z)\mathcal{Z}_{r,s}(z,y)\,d\mu(z)\,.
\end{equation}
For each fixed $x$, this identity also holds almost surely for $\mu$-almost every $y$.
\item The transposed kernel $\mathcal{Z}_{s,t}^*(x,y)=\mathcal{Z}_{s,t}(y,x)$ satisfies
\begin{equation}\label{eq:flow-reversal}
\mathcal{Z}_{s,t}^*\stackrel{\mathrm{law}}=\mathcal{Z}_{s,t}
\end{equation}
as random kernels modulo $\mu\otimes\mu$-null sets. Moreover, for each fixed $x\in X$, the random functions $\mathcal Z_{s,t}(x,\cdot)$ and $\mathcal Z_{s,t}(\cdot,x)$ have the same law modulo $\mu$-null sets.
\item For every finite $q\ge1$, there is $C>0$, depending on $q$, $\beta$, $\alpha$ and the geometric data, such that, for every $x,y\in X$,
\begin{equation}\label{eq:flow-kernel-moment-bound}
\norm{\mathcal Z_{s,t}(x,y)}_{L^q(\Omega)}\le Ce^{C(t-s)}p_{\sigma(t-s)}(x,y)
\end{equation}
Consequently, for every nonnegative measurable $f$ and every $x\in X$,
\begin{equation}\label{eq:heat-kernel-smoothing}
\norm{U_{s,t}f(x)}_{L^q(\Omega)}\le Ce^{C(t-s)}P_{\sigma(t-s)}f(x)\,.
\end{equation}
\end{enumerate}
\end{thm}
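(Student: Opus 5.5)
We define the kernel directly through its chaos expansion and then transfer every property from the chaos kernels $g_k$ of~\eqref{eq:flow-chaos-kernel}. For $0\le s<t$ we set
\begin{equation*}
\mathcal Z_{s,t}(x,y):=p_{t-s}(x,y)+\sum_{k\ge1}I_k\bigl(g_k(\cdot-s;t-s,x,y)\bigr),
\end{equation*}
where the shifted kernels are supported in $(s,t)^k$. Lemma~\ref{lem:kernel-chaos-estimate} shows that this series converges in every $L^q(\Omega)$ and gives~\eqref{eq:flow-kernel-moment-bound} at once. Each term is $\mathcal G_{s,t}$-measurable, because its integrand vanishes outside $(s,t)$. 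To get a jointly measurable version in $(\omega,x,y)$, we use that each $I_k(g_k(\cdot;x,y))$ is $L^2(\Omega)$-continuous in $(x,y)$, by dominated convergence with the heat-kernel continuity, and take a measurable modification by a standard argument; alternatively we first truncate to finitely many chaoses and pass to the limit.

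The representation~\eqref{eq:flow-kernel} comes from integrating in $y$ against bounded $f$. By a stochastic Fubini theorem for multiple integrals, justified by Lemma~\ref{lem:kernel-chaos-estimate} and $\int p_{\sigma t}(x,y)|f(y)|\,d\mu(y)<\infty$, we have $\int g_k(\cdot;t,x,y)f(y)\,d\mu(y)=f_k(\cdot;t,x)$ with initial condition $f$. Hence the right side of~\eqref{eq:flow-kernel} equals the chaos expansion of $U_{s,t}f(x)$, and Proposition~\ref{prop:chaos-exp-summability-implies-solution} identifies it with the solution. Taking expectations kills all chaoses and gives~\eqref{eq:flow-mean}. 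Nonnegativity of $\mathcal Z$ needs an argument. Theorem~\ref{thm:exist-unique-wholespace} gives $U_{s,t}f(x)\ge0$ for $f=p_\eps(w,\cdot)\ge0$, and $U_{s,t}p_\eps(w,\cdot)(x)\to\mathcal Z_{s,t}(x,w)$ in $L^2(\Omega)$ as $\eps\to0$, using the chaos bound together with continuity in $y$. So $\mathcal Z\ge0$ a.s. for each pair, and we modify on a null set. Once $\mathcal Z\ge0$, the formula extends to nonnegative measurable $f$ by monotone convergence, and so does~\eqref{eq:heat-kernel-smoothing}, by Minkowski's integral inequality and~\eqref{eq:flow-kernel-moment-bound}.

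Item (ii) follows from the independence of the isonormal process on disjoint time intervals and from time-stationarity of the covariance. The shift $a\mapsto a-s$ is an isometry of $\cH_\alpha$ restricted to $(s,t)$, so the whole chaos family is equal in law. For (iv), the reversal $a\mapsto t+s-a$ is also an isometry, and by symmetry of $p$ it maps $g_k(\cdot;t,x,y)$ to $g_k(\cdot;t,y,x)$. The random fields $(x,y)\mapsto\mathcal Z(x,y)$ and $(x,y)\mapsto\mathcal Z(y,x)$ therefore have the same finite-dimensional laws, hence the same law as kernels modulo null sets.

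For (iii), the kernel identity~\eqref{eq:flow-kernel-composition} is the main obstacle. We compute the product $\mathcal Z_{s,t}(x,z)\mathcal Z_{r,s}(z,y)$ chaos by chaos. The two factors are independent and their kernels have disjoint time supports, so $I_j(\phi)I_m(\psi)=I_{j+m}(\phi\otimes\psi)$ with no contraction terms. Integrating in $z$ and using Chapman--Kolmogorov at the junction time $s$, the concatenated kernels $\int g_j\otimes g_m\,d\mu(z)$ summed over $j+m=k$ reproduce exactly the symmetrized $g_k(\cdot;t-r,x,y)$: on the simplex, the ordered times split at $s$. Exchanging $\int d\mu(z)$ with the $L^2(\Omega)$ sums is where care is needed. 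We justify it by the absolute bound
\begin{equation*}
\int \|\mathcal Z_{s,t}(x,z)\|_{L^2}\,\|\mathcal Z_{r,s}(z,y)\|_{L^2}\,d\mu(z)\le Ce^{C(t-r)}p_{\sigma(t-r)}(x,y),
\end{equation*}
which follows from Lemma~\ref{lem:kernel-chaos-estimate}, independence and Chapman--Kolmogorov. The statement for $\mu$-a.e. $y$ then follows by Fubini in $(\omega,y)$. Finally,~\eqref{eq:flow-composition} follows by integrating~\eqref{eq:flow-kernel-composition} against $f$ with Tonelli, using nonnegativity, and applying~\eqref{eq:flow-kernel} to $U_{r,s}f$.
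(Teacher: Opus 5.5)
Your proposal is correct. Items (i), (ii), (iv) and (v) follow the paper essentially verbatim: the chaos definition of $\mathcal Z_{s,t}$, integrating $g_k$ against $f$ to recover $f_k$, and the reflection $a\mapsto s+t-a$ that exchanges $x$ and $y$. You argue composition and positivity differently, however.

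\emph{Composition.} The paper never multiplies chaoses. It notes that $\mathcal Z_{r,t}(\cdot,y)$ and $\int_X\mathcal Z_{s,t}(\cdot,z)\mathcal Z_{r,s}(z,y)\,d\mu(z)$ solve the same mild equation on $(s,\infty)$ with initial datum $\mathcal Z_{r,s}(\cdot,y)$, and concludes by the uniqueness estimate of Lemma~\ref{lem:expansion-l2-estimates}, as in Alberts--Khanin--Quastel. Your computation uses three ingredients: the product formula with no contractions for kernels with disjoint time supports, Chapman--Kolmogorov at the junction time $s$, and the count $j!\,m!/k!$ that reassembles the symmetrized $g_k$ once the ordered times are split at $s$. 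This route is more explicit and avoids uniqueness for a random, unbounded initial condition. The interchange with $\int d\mu(z)$ actually needs
\begin{equation*}
\int_X\Bigl(\sum_j\|I_j\|_{L^2(\Omega)}\Bigr)\Bigl(\sum_m\|I_m\|_{L^2(\Omega)}\Bigr)\,d\mu(z)<\infty,
\end{equation*}
not only your bound on $\|\mathcal Z\|_{L^2(\Omega)}$. Lemma~\ref{lem:kernel-chaos-estimate} controls exactly these sums, so this is fine.

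\emph{Positivity.} The paper uses no regularity of $\mathcal Z$ in its endpoints. Testing \eqref{eq:flow-kernel} against a countable algebra of indicators gives $\mathcal Z_{s,t}(x,y)\ge0$ for $\mu$-a.e.\ $y$. Time reversal transfers this to columns, and composition at the midpoint then handles every fixed pair. This is why the paper proves composition in $L^2(\Omega)$ before choosing the nonnegative version.

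Your approximation by $p_\eps(w,\cdot)$ reaches fixed pairs directly and decouples positivity from composition. It rests, however, on $L^2(\Omega)$-continuity of $y\mapsto\mathcal Z_{s,t}(x,y)$, which the paper never proves. This is not plain dominated convergence. The natural majorant $Ca_1^{-d_h/d_w}$ of $p_{a_1}(z_1,y')$ enters the squared $\cH_\alpha^{\otimes k}$-norm twice, and $a_1^{-2d_h/d_w}$ is not integrable at $0$ when $2d_h\ge d_w$ (for instance on the gasket). You must first make the contribution of $\{a_1<\eta\}$ small uniformly in $y'$ and $k$. To do so, peel off the time $a_1$ as in the recursion of Lemma~\ref{lem:kernel-chaos-estimate} and use~\eqref{eq:two-point-kernel}; this produces a factor $\int_0^\eta r^{2\alpha-d_h/d_w}\,dr$. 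Only then apply dominated convergence on $\{a_1\ge\eta\}$. Alternatively, use~\eqref{eq:weighted-space}. With that lemma supplied, your argument is complete.
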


\begin{proof}
We first construct the kernel and establish its representation, time-reversal and composition identities. We then use positivity of the solution to choose a nonnegative version. 

\emph{Construction and representation.}
For $s=0$, define
\begin{equation}\label{eq:flow-kernel-chaos}
\mathcal Z_{0,t}(x,y)=p_t(x,y)+\sum_{k\ge1}I_k(g_k(\cdot;t,x,y))\,.
\end{equation}
For general $s<t$, use the same construction with $t-s$ and noise times shifted by $s$. The kernels depend measurably on $(x,y)$ as elements of the separable Hilbert space $\cH_\alpha^{\otimes k}$. Their Wiener integrals therefore admit jointly measurable versions that are $\mathcal G_{s,t}$-measurable in $\omega$. Estimate~\eqref{eq:flow-chaos-summability} gives absolute convergence in every $L^q(\Omega)$ and almost surely for each fixed pair $(x,y)$. Taking the sum wherever it converges absolutely, and zero elsewhere, defines a jointly measurable random kernel.

For every bounded $f$,~\eqref{eq:flow-chaos-summability} and stochastic completeness give
\begin{equation*}
\int_X\left[p_t(x,y)+\sum_{k\ge1}\norm{I_k(g_k(\cdot;t,x,y))}_{L^2(\Omega)}\right]|f(y)|\,d\mu(y)
\le Ce^{Ct}\norm{f}_{L^\infty}\,.
\end{equation*}
Hence the integral in space and the chaos series may be interchanged in $L^2(\Omega)$. The Wiener isometry also allows spatial integration inside each multiple integral. Integrating~\eqref{eq:flow-chaos-kernel} against $f(y)$ replaces $p_{a_1}(z_1,y)$ by $P_{a_1}f(z_1)$, recovering the solution kernel~\eqref{eq:fk}. Comparison with~\eqref{eq:chaos-expansion} proves~\eqref{eq:flow-kernel}. Taking expectation in the series defining  $\mathcal Z_{s,t}$ gives
\begin{equation}\label{eq:flow-kernel-mean}
\EE[\mathcal Z_{s,t}(x,y)]=p_{t-s}(x,y),\qquad x,y\in X\,.
\end{equation}

\emph{Time reversal.}
For functions supported in $(s,t)\times X$, put
\begin{equation*}
(R\varphi)(a,z)=\varphi(s+t-a,z),\qquad
W_\alpha^{\leftarrow}(\varphi)=W_\alpha(R\varphi)\,.
\end{equation*}
Time reflection is an isometry of $L^2((s,t);\cW^{-\alpha})$, so $W_\alpha^{\leftarrow}$ has the same law as $W_\alpha$ restricted to $(s,t)$. Reversing the order of the variables in~\eqref{eq:flow-chaos-kernel} and using symmetry of $p_t$ exchanges $x$ and $y$. The identity $I_k^{W_\alpha^{\leftarrow}}(h)=I_k^{W_\alpha}(R^{\otimes k}h)$, first on elementary tensors and then by the Wiener isometry, therefore gives
\begin{equation*}
\mathcal Z_{s,t}^{\leftarrow}(x,y)=\mathcal Z_{s,t}(y,x)\quad\text{a.s. for each fixed }x,y\,.
\end{equation*}
By Fubini's theorem, the identity holds almost surely modulo $\mu\otimes\mu$-null sets, proving~\eqref{eq:flow-reversal}. For each fixed $x$, Fubini's theorem also gives the identity almost surely for $\mu$-almost every $y$, which proves the equality in law of the corresponding row and column. For the related convolution identity in one dimension, see~\cite[Proposition~1.6 and Section~4.3]{GhosalLin2023}.
\emph{Composition.}
We adapt the uniqueness argument of~\cite[proof of Theorem~3.1(vii)]{AlbertsKhaninQuastel2014}. Fix $0\le r<s$ and $y\in X$. For $t>s$, the fields
\begin{equation*}
\mathcal Z_{r,t}(x,y)
\qquad\text{and}\qquad
\int_X\mathcal Z_{s,t}(x,z)\mathcal Z_{r,s}(z,y)\,d\mu(z)
\end{equation*}
satisfy the same mild equation on $(s,\infty)$, with initial condition $\mathcal Z_{r,s}(\cdot,y)$ and the same noise. For the first field, this follows by splitting its stochastic convolution at $s$; for the second, by linearity and stochastic Fubini. These operations are justified by~\eqref{eq:flow-chaos-summability} and independence on disjoint time intervals. In particular,
\begin{equation*}
\int_X
\norm{\mathcal Z_{s,t}(x,z)}_{L^2(\Omega)}
\norm{\mathcal Z_{r,s}(z,y)}_{L^2(\Omega)}\,d\mu(z)
\le Ce^{C(t-r)}p_{\sigma(t-r)}(x,y)\,.
\end{equation*}
Since $s-r>0$, both fields have uniformly bounded second moments for $s<t\le T$, $x\in X$, for each $T>s$. The exponentially weighted estimate in the proof of Lemma~\ref{lem:expansion-l2-estimates}, applied to their difference with zero initial condition, gives uniqueness upon choosing $\rho$ so that $C\beta^2\rho^{d_h/d_w-1-2\alpha}<1$. This proves~\eqref{eq:flow-kernel-composition} in $L^2(\Omega)$ for each fixed pair $(x,y)$.

\emph{Positivity and the choice of version.}
For each fixed $x$, the kernel $\mathcal Z_{s,t}(x,\cdot)$ is integrable almost surely by~\eqref{eq:flow-chaos-summability}. Testing~\eqref{eq:flow-kernel} against indicators from a countable algebra generating $\mathcal B(X)$, Theorem~\ref{thm:exist-unique-wholespace} and the monotone class theorem give $\mathcal Z_{s,t}(x,y)\ge0$ for $\mu$-almost every $y$, almost surely. Time reversal gives the corresponding assertion for each fixed column, and composition at the midpoint yields $\mathcal Z_{s,t}(x,y)\ge0$ almost surely for every fixed pair $(x,y)$.

Replacing the kernel by its positive part therefore preserves every fixed-endpoint random variable. Fubini's theorem preserves the integral identities, with the exceptional sets specified in the statement. Formula~\eqref{eq:flow-kernel} now defines $U_{s,t}f$ for every nonnegative measurable $f$, and Tonelli's theorem gives~\eqref{eq:flow-mean} and~\eqref{eq:flow-composition}. Dependence only on the noise in $(s,t)$ gives independence on disjoint intervals, while time-translation invariance gives stationarity; see~\cite[proof of Theorem~3.1(ii), (vi)]{AlbertsKhaninQuastel2014}. Taking the positive part also preserves time reversal.

\emph{Moment estimates.}
The chaos expansion~\eqref{eq:flow-kernel-chaos}, the triangle inequality and~\eqref{eq:flow-chaos-summability} give~\eqref{eq:flow-kernel-moment-bound} when $s=0$; stationarity gives the general case. The kernel representation~\eqref{eq:flow-kernel} and Minkowski's integral inequality then prove~\eqref{eq:heat-kernel-smoothing} for bounded nonnegative $f$. Monotone approximation extends the estimate to every nonnegative measurable $f$, with infinite right-hand sides allowed.
\end{proof}

\subsection{Proof of existence of the Lyapunov exponents}\label{sec:real-orders}

\begin{proof}[Proof of Theorem~\ref{thm:lyapunov-main}]
Fix a real $p\ge1$ and an arbitrary reference point $o\in X$. Let $\sigma$ be as in~\eqref{eq:flow-kernel-moment-bound}, and set
\begin{equation*}
f(y)=p_\sigma(o,y),\qquad a(t)=\EE[(U_tf(o))^p]\,.
\end{equation*}
The function $f$ is bounded and positive. 

\vspace{0.4cm}

\emph{Step 1: a pointwise supermultiplicative inequality.} By~\eqref{eq:flow-kernel-moment-bound}, fix $C_0>0$ such that $\|\mathcal Z_{0,1}(x,y)\|_{L^p(\Omega)}\le C_0p_\sigma(x,y)$ for every $x,y\in X$. Minkowski's inequality then gives
\begin{equation}\label{eq:profile-smoothing}
\norm{U_1g(o)}_{L^p(\Omega)}\le C_0P_\sigma g(o)
=C_0\int_Xf(y)g(y)\,d\mu(y)\,,
\end{equation}
for every nonnegative measurable $g$. If $g$ is random and independent of the noise on $[0,1]$, the same estimate holds conditionally on $g$, first for bounded truncations and then by monotone convergence for arbitrary functions. In particular, independence and the composition $U_{1+s}=U_{s,1+s}U_s$ give
\begin{equation}\label{eq:profile-moment-smoothing}
a(s+1)\le C_0^p\EE\left[\bigl(P_\sigma U_sf(o)\bigr)^p\right],\qquad s>0\,.
\end{equation}

We now observe that one can find $L>1$ and $c>0$ such that
\begin{equation}\label{eq:profile-minorization}
p_L(z,y)\ge c f(z)f(y),\qquad z,y\in X\,.
\end{equation}
Indeed, writing $\gamma=d_w/(d_w-1)$, the triangle inequality gives
\begin{equation*}
d(z,y)^\gamma\le2^{d_w/(d_w-1)-1}\bigl(d(z,o)^{d_w/(d_w-1)}+d(o,y)^{d_w/(d_w-1)}\bigr)\,,
\end{equation*}
so that  ~\eqref{eq:profile-minorization} follows from the heat kernel estimates.

Split the flow $U_{s+L+t}$ in the intervals $[0,s], [s,s+L],[s+L,s+L+t]$ of lengths $s,L,t$ and condition on $\mathcal A=\mathcal G_{0,s}\vee\mathcal G_{s+L,s+L+t}$. The middle operator $U_{s,s+L}$ is independent of $\mathcal A$ and has mean $P_L$. Conditional Tonelli's theorem and~\eqref{eq:profile-minorization} give
\begin{align*}
\EE[U_{s+L+t}f(o)\mid\mathcal A]
&=\bigl(U_{s+L,s+L+t}P_LU_sf\bigr)(o)\\
&\ge c\,\bigl(U_{s+L,s+L+t}f\bigr)(o)\,P_\sigma U_sf(o)\,.
\end{align*}
The two factors on the right are independent. Conditional Jensen's inequality, stationarity and~\eqref{eq:profile-moment-smoothing} therefore yield
\begin{equation*}
a(s+L+t)\ge c^pa(t)\EE\left[\bigl(P_\sigma U_sf(o)\bigr)^p\right]
\ge(c/C_0)^pa(t)a(s+1)\,.
\end{equation*}
Consequently,
\begin{equation}\label{eq:compact-supermultiplicativity}
a(s+t+L-1)\ge(c/C_0)^pa(s)a(t),\qquad s,t>1\,.
\end{equation}
On the other hand, Jensen's inequality, the mean identity~\eqref{eq:flow-mean} and~\eqref{eq:solution-moment-bound} give
\begin{equation*}
p_{t+\sigma}(o,o)^p\le a(t)\le Ce^{Ct}\,.
\end{equation*}
Thus $\log a$ is locally bounded below, and the diagonal heat kernel lower bound gives $a(t)\ge ct^{-pd_h/d_w}$ for $t\ge1$. Applying then the continuous-parameter Fekete argument~\cite[Lemma~2, p.~993]{Matkowski1993} to the supermultiplicative function $t\mapsto(c/C_0)^pa(t-L+1)$ on $(L,\infty)$ yields the existence of
\begin{equation}\label{eq:compact-rate}
\ell_p(o):=\lim_{t\to\infty}\frac{\log a(t)}t\in[0,\infty)\,.
\end{equation}

\emph{Step 2: comparison with the constant initial condition.} We first record an inequality for independent copies $F,F'$ of a nonnegative jointly measurable random function on a measurable set $D$ with $0<\mu(D)<\infty$. If $\int_DF\,d\mu\in L^p(\Omega)$, then
\begin{equation}\label{eq:independent-pair-inequality}
\Bigl\|\int_DFF'\,d\mu\Bigr\|_{L^p(\Omega)}
\ge\mu(D)^{-1}\Bigl\|\int_DF\,d\mu\Bigr\|_{L^p(\Omega)}^2\,.
\end{equation}
Indeed, set $M=\int_DF\,d\mu$ and $M'=\int_DF'\,d\mu$. Tonelli's theorem, independence and Cauchy--Schwarz give
\begin{align*}
\EE\Bigl[\Bigl(\int_DFF'\,d\mu\Bigr)M^{p-1}(M')^{p-1}\Bigr]
&=\int_D\bigl(\EE[F(y)M^{p-1}]\bigr)^2\,d\mu(y)\\
&\ge\mu(D)^{-1}\bigl(\EE[M^p]\bigr)^2\,.
\end{align*}
For $p>1$, H\"older's inequality bounds the left-hand side by $\|\int_DFF'\,d\mu\|_{L^p(\Omega)}(\EE[M^p])^{2(p-1)/p}$, proving~\eqref{eq:independent-pair-inequality} when $\EE[M^p]>0$ and the left-hand side of~\eqref{eq:independent-pair-inequality} is finite; the other cases are immediate. For $p=1$, omit the factors $M^{p-1}(M')^{p-1}$ in the calculation.

We now come back to the proof. The composition identity~\eqref{eq:flow-kernel-composition} gives
\begin{equation*}
\mathcal Z_{0,2t}(o,o)=\int_X\mathcal Z_{t,2t}(o,z)\mathcal Z_{0,t}(z,o)\,d\mu(z)\,.
\end{equation*}
By independence, time reversal and stationarity, the two factors are independent random functions of $z$ with the same law as $\mathcal Z_{0,t}(o,z)$, modulo $\mu$-null sets. Restricting the integral to $D$ and applying~\eqref{eq:independent-pair-inequality} yields
\begin{equation*}
\EE[(U_t\1_D(o))^p]
\le\mu(D)^{p/2}\EE[\mathcal Z_{0,2t}(o,o)^p]^{1/2}\,.
\end{equation*}

We then use:
\begin{equation*}
\mathcal Z_{0,T+1}(o,o)=\int_X\mathcal Z_{1,T+1}(o,z)\mathcal Z_{0,1}(z,o)\,d\mu(z)\,.
\end{equation*}
Condition on $\mathcal G_{1,T+1}$ and apply Minkowski's inequality to the independent first interval $[0,1]$. The kernel moment bound~\eqref{eq:flow-kernel-moment-bound} gives $\|\mathcal Z_{0,1}(z,o)\|_{L^p(\Omega)}\le C_0f(z)$. Therefore
\begin{equation}\label{eq:diagonal-profile-comparison}
\EE[\mathcal Z_{0,T+1}(o,o)^p]
\le C_0^p\EE[(U_{1,T+1}f(o))^p]=C_0^pa(T)\,.
\end{equation}
Combining the last two estimates, for $t>1/2$ we obtain
\begin{equation}\label{eq:compact-unrestricted-comparison}
\EE[(U_t\1_D(o))^p]\le C\mu(D)^{p/2}a(2t-1)^{1/2}\,.
\end{equation}

Let $D_t=B(o,rt)$, where $r$ is a sufficiently large fixed constant. The tail bound~\eqref{eq:oneParticleTail} at time $\sigma t$ and~\eqref{eq:heat-kernel-smoothing} give
\begin{equation*}
\norm{U_t\1_{X\setminus D_t}(o)}_{L^p(\Omega)}
\le C\exp\bigl((C-cr^{d_w/(d_w-1)})t\bigr)\le Ce^{-t},\qquad t\ge1\,.
\end{equation*}
Since $a(t)\le\norm{f}_{L^\infty}^p\EE[(U_t\1(o))^p]$, Ahlfors regularity,~\eqref{eq:compact-unrestricted-comparison} and Minkowski's inequality give
\begin{equation}\label{eq:averaged-flat-rate}
\norm{f}_{L^\infty}^{-p}a(t)
\le\EE[(U_t\1(o))^p]
\le Ct^{pd_h/2}a(2t-1)^{1/2}+Ce^{-pt},\qquad t\ge2\,.
\end{equation}
Taking logarithms, dividing by $t$ and using~\eqref{eq:compact-rate} and $\ell_p(o)\ge0$, we obtain
\begin{equation*}
\lim_{t\to\infty}\frac1t\log\EE[(U_t\1(o))^p]=\ell_p(o)\,.
\end{equation*}
The reference point $o$ was arbitrary, so this limit exists at every point of $X$.

\emph{Step 3: independence of $o$ and initial condition.} Fix $x\in X$. For a sufficiently large fixed $\eta$, the two-sided heat kernel bounds give
\begin{equation*}
p_\eta(x,y)\ge c_{x,o}p_\sigma(o,y),\qquad y\in X\,.
\end{equation*}
 We deduce:
\begin{align}\label{eq:solution-pointwise-average}
\EE[(U_{t+\eta}\1(x))^p]
&\ge\EE[(P_\eta U_t\1(x))^p]\notag\\
&\ge c_{x,o}^p\EE[(P_\sigma U_t\1(o))^p]\notag\\
&\ge(c_{x,o}/C_0)^p\EE[(U_{t+1}\1(o))^p]\,.
\end{align}
Indeed, conditioning on $\mathcal G_{0,t}$, the mean identity and Jensen's inequality give the first inequality. The last follows by applying~\eqref{eq:profile-smoothing} conditionally to $U_t\1$.
The limits established in Step~2 imply $\ell_p(x)\ge\ell_p(o)$. Interchanging $x$ and $o$ proves equality. Therefore $\ell_p(o)$ is independent of $o$.

Finally let $u_0$ be bounded, nonnegative and nonzero. Choose $b>0$ and a bounded measurable set $A$ of positive measure such that $u_0\ge b\1_A$ almost everywhere. For a sufficiently large fixed $\eta$, the same heat kernel comparison as before gives
\begin{equation*}
P_\eta u_0\ge c f\,.
\end{equation*}
To see this, choose $R<\infty$ with $A\subset B(o,R)$, use $d(y,z)\le d(y,o)+R$ for $z\in A$ in the lower bound for $p_\eta(y,z)$, and integrate over $A$. Conditioning on $\mathcal G_{\eta,t+\eta}$ replaces the initial operator $U_{0,\eta}$ by $P_\eta$. Jensen's inequality, stationarity and positivity therefore yield
\begin{equation}\label{eq:solution-outer-intervals}
\EE[(U_{t+\eta}u_0(o))^p]
\ge\EE[(U_t(P_\eta u_0)(o))^p]
\ge c^pa(t)\,.
\end{equation}
Together with
\begin{equation*}
\EE[(U_tu_0(o))^p]\le\norm{u_0}_{L^\infty}^p\EE[(U_t\1(o))^p]\,,
\end{equation*}
this proves that the moment growth rate for $u_0$ at $o$ is $\ell_p(o)$. Since $o$ was arbitrary and $\ell_p(o)$ is independent of $o$,~\eqref{eq:mainLimit} follows with this common value as $\lambda_p(\beta)$.
\end{proof}

\section{Upper and lower bounds for the Lyapunov exponents}\label{sec:moment-bounds}

In this section, we estimate the Lyapunov exponents whose existence was established in Theorem~\ref{thm:lyapunov-main}. For arbitrary bounded initial conditions, Proposition~\ref{prop:Lp-estimates} already gives the upper bound~\eqref{eq:Lp-limsup}. For the lower bound, we combine the Gaussian exponential test of~\cite[Proposition~3.20]{ChenDeyaOuyangTindel2021} with the ball exit estimate of Lemma~\ref{lem:ball-exit-time-LB}. Throughout this section, $\alpha$ satisfies~\eqref{eq:alpha-range} and $\beta>0$.

\subsection{Lower bounds}\label{sec:lower}

For a real Borel function $\psi$, define
\begin{equation}\label{eq:test-potential}
V_\psi(z)=
\begin{cases}
\displaystyle\int_XG_{2\alpha}(z,y)\psi(y)\,d\mu(y),&\alpha>0,\\[1ex]
\psi(z),&\alpha=0,
\end{cases}
\end{equation}
whenever the integral is absolutely convergent.

\begin{lem}\label{lem:Gaussian-test}
Let $\beta>0$ and let $u_0\in L^\infty(X,\mu)$ be nonnegative. Let $\psi\in\Lambda_\alpha$ be real Borel and suppose that $V_{|\psi|}$ is bounded on $X$. Then, for every real $p>1$, $t>0$ and $x\in X$,
\begin{equation}\label{eq:Gaussian-test}
\norm{u(t,x)}_{L^p(\Omega)}
\ge \exp\Bigl(-\frac{t\norm{\psi}_{\cW^{-\alpha}}^2}{2(p-1)}\Bigr)
\EE_x\Bigl[u_0(B_t)\exp\Bigl(\beta\int_0^tV_\psi(B_s)\,ds\Bigr)\Bigr]\,.
\end{equation}
\end{lem}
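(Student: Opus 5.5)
The plan is a Girsanov-type change of measure combined with a reverse Hölder (duality) inequality, in the spirit of~\cite[Proposition~3.20]{ChenDeyaOuyangTindel2021}. Set $h(s,y)=\1_{(0,t)}(s)\psi(y)$, so that $h\in\cH_\alpha$ with $\|h\|_{\cH_\alpha}^2=t\|\psi\|^2_{\cW^{-\alpha}}$. For $\theta>0$ define the Gaussian exponential
\begin{equation*}
\mathcal E_\theta=\exp\Bigl(\theta W_\alpha(h)-\tfrac{\theta^2}{2}t\|\psi\|^2_{\cW^{-\alpha}}\Bigr),
\qquad\EE[\mathcal E_\theta]=1 .
\end{equation*}
By Hölder's inequality with conjugate exponent $p'=p/(p-1)$, and since $u(t,x)\ge0$ by Theorem~\ref{thm:exist-unique-wholespace},
\begin{equation*}
\EE[u(t,x)\mathcal E_\theta]\le\|u(t,x)\|_{L^p}\,\|\mathcal E_\theta\|_{L^{p'}}
=\|u(t,x)\|_{L^p}\exp\Bigl(\tfrac{\theta^2(p'-1)}{2}t\|\psi\|^2_{\cW^{-\alpha}}\Bigr).
\end{equation*}
With $p'-1=1/(p-1)$, choosing $\theta=1$ already produces the prefactor $\exp(-t\|\psi\|^2/(2(p-1)))$ of~\eqref{eq:Gaussian-test}. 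It therefore remains to prove
\begin{equation*}
\EE[u(t,x)\mathcal E_1]=\EE_x\Bigl[u_0(B_t)\exp\Bigl(\beta\int_0^tV_\psi(B_s)\,ds\Bigr)\Bigr].
\end{equation*}

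To compute $\EE[u(t,x)\mathcal E_1]$, I would use the chaos expansion. Since $\mathcal E_1=\sum_k I_k(h^{\otimes k})/k!$, orthogonality gives
\begin{equation*}
\EE[u(t,x)\mathcal E_1]=P_tu_0(x)+\sum_{k\ge1}\langle f_k(\cdot;t,x),h^{\otimes k}\rangle_{\cH^{\otimes k}}.
\end{equation*}
The series converges absolutely by Lemma~\ref{lem:expansion-l2-estimates} and $\|h^{\otimes k}\|=\|h\|^k$ (this is Cauchy–Schwarz in $L^2(\Omega)$). For $\alpha>0$, the pairing in each spatial variable against $\psi$ through $G_{2\alpha}$ replaces $y_i\mapsto V_\psi(y_i)$. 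Thus, by~\eqref{eq:fk}, the $k$-th term equals
\begin{equation*}
\beta^k\int_{0<s_1<\dots<s_k<t}\int_{X^k}p_{t-s_k}(x,y_k)\cdots p_{s_2-s_1}(y_2,y_1)V_\psi(y_1)\cdots V_\psi(y_k)P_{s_1}u_0(y_1)\,d\mu\,ds,
\end{equation*}
where the factor $k!$ from symmetrization cancels the $1/k!$. By the Markov property this is
\begin{equation*}
\EE_x\Bigl[u_0(B_t)\,\beta^k\!\!\int_{0<r_1<\dots<r_k<t}\prod_i V_\psi(B_{r_i})\,dr\Bigr]=\EE_x\Bigl[u_0(B_t)\tfrac{\beta^k}{k!}\Bigl(\int_0^tV_\psi(B_s)ds\Bigr)^k\Bigr].
\end{equation*}
Summing over $k$ yields the Feynman–Kac expression. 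The interchange of sum and expectation is justified because $V_{|\psi|}$ is bounded, so the exponential series is dominated by $\|u_0\|_\infty e^{\beta t\|V_{|\psi|}\|_\infty}$. The case $\alpha=0$ is identical, with $V_\psi=\psi$ and the $L^2$ pairing.

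The main obstacle is making the kernel pairing rigorous. The pairing $\langle f_k,h^{\otimes k}\rangle_{\cH^{\otimes k}}$ must be given by the absolutely convergent integral formula~\eqref{eq:H-inner-product}, iterated in $k$ variables, with Fubini justified. Since $\psi\in\Lambda_\alpha$ only (not necessarily $L^1\cap L^\infty$), I would bound the absolute integrand by the same expression with $|\psi|$ and $|u_0|$. Boundedness of $V_{|\psi|}$ and the Chapman–Kolmogorov identity~\eqref{eq:chapman} then give a finite bound $\|u_0\|_\infty(\beta t\|V_{|\psi|}\|_\infty)^k/k!$. This justifies the Tonelli/Fubini steps, and the pairing agrees with the $\cH^{\otimes k}$ inner product by Lemma~\ref{lem:sobolev-IP} applied in each variable.
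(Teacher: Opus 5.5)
Your proposal is correct and follows essentially the same route as the paper. Both arguments apply H\"older's inequality against the Gaussian exponential $\exp\bigl(W_\alpha(h)-\tfrac12\|h\|_{\cH_\alpha}^2\bigr)$ with $h=\1_{(0,t)}\psi$. They then pair the chaos expansions so that the covariance produces $V_\psi$ in each spatial variable, and obtain the Feynman--Kac expression from the Markov property and symmetrization, with every interchange justified by Tonelli's theorem applied to $|\psi|$ and $|u_0|$ together with the boundedness of $V_{|\psi|}$. The extra parameter $\theta$ and the appeal to $u(t,x)\ge0$ in the H\"older step are unnecessary, since $\EE[u\,\mathcal E_1]\le\|u\|_{L^p}\|\mathcal E_1\|_{L^{p'}}$ holds regardless of sign, but they do no harm.
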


\begin{proof}
We adapt the argument of~\cite[Proposition~3.20]{ChenDeyaOuyangTindel2021}. Set $h_t(s,y)=\1_{(0,t)}(s)\psi(y)\in\cH_\alpha$ and
\begin{equation*}
Z_{h_t}:=\exp\Bigl(W_\alpha(h_t)-\frac12\norm{h_t}_{\cH_\alpha}^2\Bigr),
\qquad \norm{h_t}_{\cH_\alpha}^2=t\norm{\psi}_{\cW^{-\alpha}}^2\,.
\end{equation*}
The chaos expansion $Z_{h_t}=1+\sum_{k\ge1}I_k(h_t^{\otimes k})/k!$ converges in $L^2(\Omega)$. Pairing it with~\eqref{eq:chaos-expansion} gives
\begin{equation}\label{eq:Gaussian-pairing}
\EE[u(t,x)Z_{h_t}]
=P_tu_0(x)+\sum_{k=1}^\infty
\langle f_k(\cdot;t,x),h_t^{\otimes k}\rangle_{\cH_\alpha^{\otimes k}}\,.
\end{equation}
The covariance pairing in each spatial variable produces $V_\psi$, with $V_\psi=\psi$ when $\alpha=0$. Restricting the symmetric integrand to ordered times cancels the factor $1/k!$ in~\eqref{eq:fk}. The Markov property used at times $t-s_k,\ldots,t-s_1$ then gives
\begin{align*}
\langle f_k(\cdot;t,x),h_t^{\otimes k}\rangle_{\cH_\alpha^{\otimes k}}
&=\beta^k\EE_x\Bigl[u_0(B_t)
\int_{0<s_1<\cdots<s_k<t}\prod_{j=1}^kV_\psi(B_{t-s_j})\,ds\Bigr]\\
&=\frac{\beta^k}{k!}\EE_x\Bigl[u_0(B_t)\Bigl(\int_0^tV_\psi(B_s)\,ds\Bigr)^k\Bigr]\,.
\end{align*}
The second equality follows from symmetry of the time integrand and the change of variables $r=t-s$. To justify these integrations, apply Tonelli's theorem with $|\psi|$ in place of $\psi$. Since $|V_\psi|\le V_{|\psi|}$, the semigroup property bounds the absolute integral by
\begin{equation*}
\norm{u_0}_{L^\infty}
\frac{\bigl(\beta t\norm{V_{|\psi|}}_{L^\infty}\bigr)^k}{k!}\,.
\end{equation*}
This bound justifies Fubini's theorem and is summable in $k$. Summing the exponential series in~\eqref{eq:Gaussian-pairing} therefore gives
\begin{equation}\label{eq:Gaussian-FK}
\EE[u(t,x)Z_{h_t}]
=\EE_x\Bigl[u_0(B_t)\exp\Bigl(\beta\int_0^tV_\psi(B_s)\,ds\Bigr)\Bigr]\,.
\end{equation}
For $1<p<2$, $u(t,x)\in L^p(\Omega)$ since $u(t,x)\in L^2(\Omega)$; for $p\ge2$, this follows from Proposition~\ref{prop:Lp-estimates}. For $q=p/(p-1)$, direct Gaussian integration gives
\begin{equation*}
\norm{Z_{h_t}}_{L^q(\Omega)}
=\exp\Bigl(\frac{t\norm{\psi}_{\cW^{-\alpha}}^2}{2(p-1)}\Bigr)\,.
\end{equation*}
Since the right-hand side of~\eqref{eq:Gaussian-FK} is nonnegative, H\"older's inequality proves~\eqref{eq:Gaussian-test}.
\end{proof}

\begin{thm}\label{thm:moment-LB}
Let $\alpha$ satisfy~\eqref{eq:alpha-range}, let $\beta>0$, and let $\lambda_p(\beta)$ be the exponents of Theorem~\ref{thm:lyapunov-main}. There exists $c>0$, depending only on $\alpha$ and the structural constants, such that, for every real $p>1$,
\begin{equation}\label{eq:LB-global}
\lambda_p(\beta)\ge c\,p\,\bigl(\beta^2(p-1)\bigr)^{\frac{d_w}{(1+2\alpha)d_w-d_h}}\,.
\end{equation}
\end{thm}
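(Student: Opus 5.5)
Since $\lambda_p(\beta)$ does not depend on the nonzero nonnegative initial condition (Theorem~\ref{thm:lyapunov-main}), we take $u_0\equiv1$ and fix a point $x\in X$. We then apply Lemma~\ref{lem:Gaussian-test} with a test function $\psi$ adapted to a ball $B=B(x,r)$, where the radius $r$ will be optimized later. Concretely, take $\psi=\theta\,\1_{B}$ with amplitude $\theta>0$. We restrict the expectation on the right-hand side of \eqref{eq:Gaussian-test} to the event $\{\tau_B>t\}$. Since $u_0\equiv 1$, this gives
\begin{equation*}
\norm{u(t,1)}_{L^p(\Omega)}\ge\exp\Bigl(-\frac{t\norm{\psi}^2_{\cW^{-\alpha}}}{2(p-1)}\Bigr)\exp\Bigl(\beta t\inf_{B}V_\psi\Bigr)\PP_x(\tau_B>t)\,.
\end{equation*}

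The next step is to estimate the two geometric quantities. For $\alpha>0$, we use Lemma~\ref{lem:fractional-laplacian-kernel-existence-and-bound} and Ahlfors regularity. For $z\in B$, every $y\in B$ satisfies $d(z,y)<2r$, so $G_{2\alpha}(z,y)\ge c\,r^{2\alpha d_w-d_h}$. Integrating over $B$ gives $\inf_BV_\psi\ge c\,\theta r^{2\alpha d_w}$. In the other direction, the upper kernel bound gives $\norm{\psi}^2_{\cW^{-\alpha}}\le C\theta^2\int_B\int_B d(y,z)^{2\alpha d_w-d_h}\le C\theta^2 r^{d_h+2\alpha d_w}$; the inner integral is finite because $2\alpha d_w>0$, using annuli exactly as in Lemma~\ref{lem:tail-exit-UB}. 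For $\alpha=0$ we simply have $V_\psi=\theta$ on $B$ and $\norm{\psi}^2=\theta^2\mu(B)\le C\theta^2r^{d_h}$, so both cases read the same way. The function $V_{|\psi|}$ is bounded by \eqref{eq:G-integrable}, and $\psi\in\Lambda_\alpha$, so the lemma applies. Lemma~\ref{lem:ball-exit-time-LB} gives $\PP_x(\tau_B>t)\ge\delta_0e^{-Ct/r^{d_w}}$. Raising to the power $p$, taking logarithms, dividing by $t$ and letting $t\to\infty$, we obtain
\begin{equation*}
\lambda_p(\beta)\ge p\Bigl(c_1\beta\theta r^{2\alpha d_w}-\frac{C_2\theta^2r^{d_h+2\alpha d_w}}{p-1}-\frac{C_3}{r^{d_w}}\Bigr)\,.
\end{equation*}

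Finally we optimize over $\theta$ and $r$. Choosing $\theta=c_1\beta(p-1)r^{-d_h}/(2C_2)$ turns the first two terms into $c_4\beta^2(p-1)r^{2\alpha d_w-d_h}$, so that
\begin{equation*}
\lambda_p(\beta)\ge p\bigl(c_4\beta^2(p-1)r^{2\alpha d_w-d_h}-C_3r^{-d_w}\bigr)\,.
\end{equation*}
By \eqref{eq:parameter-inequalities}, $d_h-2\alpha d_w<d_w$, so the gain term $r^{2\alpha d_w-d_h}$ beats the loss $r^{-d_w}$ as $r\to0$. We set $r^{(1+2\alpha)d_w-d_h}=K/(\beta^2(p-1))$ for a large fixed constant $K$. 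Then $c_4\beta^2(p-1)r^{2\alpha d_w-d_h}=c_4K\,r^{-d_w}$, and with $K\ge 2C_3/c_4$ the bracket is at least $(c_4K/2)r^{-d_w}=c\,(\beta^2(p-1))^{d_w/((1+2\alpha)d_w-d_h)}$, which is the bound claimed in \eqref{eq:LB-global}. The constants depend only on $\alpha$ and the structural data, and all scales $r>0$ are allowed because the space is unbounded and Lemma~\ref{lem:ball-exit-time-LB} holds for every $r$.

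The main obstacle I expect is the uniform two-sided control of the Riesz potential of the ball: the lower bound on $V_\psi$ throughout $B$ together with the upper bound on the $\cW^{-\alpha}$ norm, both with the correct scaling in $r$. The near-diagonal singularity of $G_{2\alpha}$ has to be integrable uniformly in $r$, and it is, by Ahlfors regularity. A second point needing care is that the limit in $t$ is justified for $u_0\equiv1$, which is covered by the existence of the limit in Theorem~\ref{thm:lyapunov-main}.
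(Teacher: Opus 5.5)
Your proposal is correct and follows essentially the same route as the paper: the test function $\theta\1_{B(x,r)}$ in the Gaussian exponential test, two-sided Riesz-potential estimates on the ball, restriction to $\{\tau_{B(x,r)}>t\}$ with the exit-time lower bound, then optimization in the amplitude and the radius. The only differences are cosmetic: you get boundedness of the potential from \eqref{eq:G-integrable} rather than the explicit $\sup$ bound, and you take a near-optimal radius instead of the exact maximizer $r_*$. One small slip: since $(1+2\alpha)d_w-d_h>0$, the gain $r^{2\alpha d_w-d_h}$ dominates the loss $r^{-d_w}$ as $r\to\infty$, not as $r\to0$; your actual choice of a large $K$, which makes $r$ large, is consistent with the correct direction.
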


\begin{proof}
By Theorem~\ref{thm:lyapunov-main}, it suffices to take $u_0\equiv1$. Fix $p>1$, $x\in X$ and $r>0$. We first estimate the potential and covariance norm of the indicator of $B(x,r)$. We claim there are constants $c,C>0$, depending only on $\alpha$ and the structural constants, such that
\begin{equation}\label{eq:ball-test-estimates}
\begin{aligned}
\inf_{z\in B(x,r)}V_{\1_{B(x,r)}}(z)&\ge c r^{2\alpha d_w},
\qquad \sup_{z\in X}V_{\1_{B(x,r)}}(z)\le C r^{2\alpha d_w},\\
\norm{\1_{B(x,r)}}_{\cW^{-\alpha}}^2&\le C r^{d_h+2\alpha d_w}\,.
\end{aligned}
\end{equation}
For $\alpha=0$, these follow from $V_{\1_{B(x,r)}}=\1_{B(x,r)}$ and Ahlfors regularity. Suppose $\alpha>0$. The lower bound in~\eqref{eq:Riesz-bounds}, the inequality $d(z,y)<2r$ for $z,y\in B(x,r)$, and $\mu(B(x,r))\ge cr^{d_h}$ give the first estimate. For the upper bound, split the integral at distance $r$ from $z$. On $B(x,r)\setminus B(z,r)$ it is at most $Cr^{2\alpha d_w-d_h}\mu(B(x,r))$. The integral over $B(z,r)$ is bounded by summing over dyadic annuli:
\begin{equation*}
\int_{B(z,r)}d(z,y)^{2\alpha d_w-d_h}\,d\mu(y)
\le C r^{2\alpha d_w}\sum_{j\ge0}2^{-2j\alpha d_w}
\le C_\alpha r^{2\alpha d_w}\,.
\end{equation*}
Finally, $\norm{\1_{B(x,r)}}_{\cW^{-\alpha}}^2=\int_{B(x,r)}V_{\1_{B(x,r)}}\,d\mu$ gives the last estimate in~\eqref{eq:ball-test-estimates}. In particular, $\1_{B(x,r)}\in\Lambda_\alpha$ and its potential is bounded.

Apply Lemma~\ref{lem:Gaussian-test} with $\psi=\eta\1_{B(x,r)}$, $\eta>0$, and restrict the diffusion expectation to $\{\tau_{B(x,r)}>t\}$. Estimates~\eqref{eq:ball-test-estimates} and~\eqref{eq:exit-LB} give
\begin{equation}\label{eq:white-noise-test-ball}
\norm{u(t,x)}_{L^p(\Omega)}
\ge\delta_0\exp\left(
t\left[c\beta\eta r^{2\alpha d_w}-Cr^{-d_w}
-\frac{C\eta^2r^{d_h+2\alpha d_w}}{2(p-1)}\right]\right)\,.
\end{equation}
The quadratic expression in $\eta$ in the logarithmic rate is maximized at $\eta=(c/C)\beta(p-1)r^{-d_h}$. Set $A=(c^2/(2C))\beta^2(p-1)$. Taking logarithmic rates gives
\begin{equation}\label{eq:LB-eps}
\lambda_p(\beta)\ge p\bigl(A r^{2\alpha d_w-d_h}-Cr^{-d_w}\bigr)\,.
\end{equation}
By~\eqref{eq:parameter-inequalities}, $0<d_h-2\alpha d_w<d_w$. The expression in parentheses is maximized at
\begin{equation*}
r_*=
\Bigl(\frac{d_wC}{(d_h-2\alpha d_w)A}\Bigr)^{\frac1{(1+2\alpha)d_w-d_h}}\,,
\end{equation*}
and substitution gives~\eqref{eq:LB-global}, since every radius is admissible.
\end{proof}

\begin{remark}\label{rem:compact-moment-comparison}
For $\alpha>0$ and integer $p\ge2$, Theorem~\ref{thm:moment-LB} is the analogue of~\cite[Theorem~3.16]{BCHOTW24} in the present setting.  At $\alpha=0$, the second-moment lower bound can also be obtained by the renewal argument of~\cite[Section~3.6.2]{BCHOTW24}. For the PAM on compact p.c.f.\ fractals, Hambly and Yang~\cite{HamblyYang2026} prove weak intermittency and predict moment-order growth with exponent $2+D$, where $D$ is the Hausdorff dimension in the resistance metric. On standard nested fractals, $D=d_h/(d_w-d_h)$, so this exponent agrees with $1+d_w/(d_w-d_h)$ in our whole-space bounds.
\end{remark}

\subsection{Bounds for the exponents and intermittency}\label{sec:intermittency}

Combining Theorem~\ref{thm:lyapunov-main} with Proposition~\ref{prop:Lp-estimates} and Theorem~\ref{thm:moment-LB} gives the sharp dependence of the Lyapunov exponents on $\beta$ and $p$ for $p\ge2$. The lower bound is positive for every $p>1$ and, together with the first moment, whose logarithmic rate is zero, yields strict increase of $p\mapsto\lambda_p(\beta)/p$ on $(1,\infty)$.

\begin{cor}\label{cor:lyapunov-bounds}
For every real $p\ge2$,
\begin{equation}\label{eq:lyapunov-two-sided}
cp\bigl(\beta^2(p-1)\bigr)^{\frac{d_w}{(1+2\alpha)d_w-d_h}}
\le\lambda_p(\beta)
\le Cp\bigl(\beta^2(p-1)\bigr)^{\frac{d_w}{(1+2\alpha)d_w-d_h}}\,,
\end{equation}
where $c,C>0$ depend only on $\alpha$ and the geometric data. The lower bound also holds for every real $p>1$. For every nonzero nonnegative $u_0\in L^\infty(X,\mu)$ and every $x\in X$,
\begin{equation}\label{eq:first-moment-exponent}
\lambda_1(\beta):=\lim_{t\to\infty}\frac1t\log\EE[u(t,x)]=0\,,
\end{equation}
the function $p\mapsto\lambda_p(\beta)/p$ is strictly increasing on $(1,\infty)$, and
\begin{equation}\label{eq:strict-intermittency}
\lambda_1(\beta)<\frac{\lambda_2(\beta)}2<\frac{\lambda_3(\beta)}3<\cdots,
\qquad
\lim_{p\to\infty}\frac{\lambda_p(\beta)}p=+\infty\,.
\end{equation}
Thus the solution is fully intermittent at integer moment orders in the sense of~\cite[Section~1.2]{TindelAarhus2026}.
\end{cor}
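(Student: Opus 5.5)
The plan is to assemble the corollary from results already proved; only the strict monotonicity of $p\mapsto\lambda_p(\beta)/p$ needs a small additional argument.

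\emph{Two-sided bounds.} For $p\ge2$, the upper bound in~\eqref{eq:lyapunov-two-sided} follows from~\eqref{eq:Lp-limsup} in Proposition~\ref{prop:Lp-estimates}. That estimate gives $\limsup_{t\to\infty} t^{-1}\log\EE[u(t,x)^p]\le \frac p2 C_\alpha^{d_w/((1+2\alpha)d_w-d_h)}(\beta^2(p-1))^{d_w/((1+2\alpha)d_w-d_h)}$. By Theorem~\ref{thm:lyapunov-main} the limit exists, so this is a bound on $\lambda_p(\beta)$ with $C=\frac12 C_\alpha^{d_w/((1+2\alpha)d_w-d_h)}$. This constant depends only on $\alpha$ and the geometric data. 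The lower bound, for every real $p>1$, is exactly Theorem~\ref{thm:moment-LB}.

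\emph{First moment.} By~\eqref{eq:chaos-expansion} every chaos term has mean zero, so $\EE[u(t,x)]=P_tu_0(x)$. The upper bound is $\|u_0\|_{L^\infty}$ by~\eqref{eq:semigroup-infinity-bound}. For the lower bound, choose $b>0$ and a bounded set $A\subset B(o,R)$ with $u_0\ge b\1_A$. The heat kernel lower bound in~\eqref{eq:subGauss} then gives $P_tu_0(x)\ge b\,\mu(A)\,c\,t^{-d_h/d_w}$ for $t$ large, because the exponential factor is bounded below once $t\ge (d(x,o)+R)^{d_w}$. Both bounds are subexponential, so $\lambda_1(\beta)=0$. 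Positivity of $u$ from Theorem~\ref{thm:exist-unique-wholespace} ensures $\EE[u]=\EE[|u|]$.

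\emph{Strict monotonicity and divergence.} The key tool is convexity of $p\mapsto\lambda_p(\beta)$ on $[1,\infty)$. For each $t$, H\"older's inequality makes $p\mapsto\log\EE[u(t,x)^p]$ convex, and pointwise limits of convex functions are convex. Set $g(p)=\lambda_p(\beta)$. It satisfies $g(1)=0$, and $g(p)>0$ for $p>1$ by the lower bound.
\begin{itemize}
\item For $1<p<q$, write $p$ as a convex combination of $1$ and $q$ with weight $\theta=(q-p)/(q-1)$ on $1$. Convexity gives $g(p)\le \frac{p-1}{q-1}g(q)$.
\item Hence $g(p)/p\le \frac{p-1}{p}\cdot\frac{g(q)}{q-1}$. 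It remains to check $\frac{p-1}{p}\cdot\frac{q}{q-1}<1$. This is equivalent to $(p-1)q<p(q-1)$, that is, $p<q$, which holds.
\item Since $g(q)>0$, this yields the strict inequality $g(p)/p<g(q)/q$.
\end{itemize}
The chain~\eqref{eq:strict-intermittency} follows, including $\lambda_1=0<\lambda_2/2$ by positivity. Finally, $\lambda_p/p\ge c(\beta^2(p-1))^{d_w/((1+2\alpha)d_w-d_h)}\to\infty$, since the exponent is positive by~\eqref{eq:parameter-inequalities}.

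The only mildly delicate step is the convexity argument. It needs finiteness of $\lambda_p$ for all $p\ge1$, which Theorem~\ref{thm:lyapunov-main} supplies. It also needs convexity to pass to the limit, which is immediate. I expect no real obstacle.
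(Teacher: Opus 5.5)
Your proposal is correct and essentially follows the paper's proof. The two-sided bounds are read off from Proposition~\ref{prop:Lp-estimates} and Theorem~\ref{thm:moment-LB}, and strict monotonicity comes from the same H\"older interpolation between orders $1$ and $q$, which you phrase as convexity of $p\mapsto\lambda_p(\beta)$, combined with $\lambda_1(\beta)=0$ and positivity of the lower bound. The only minor difference is that you verify $\lambda_1(\beta)=0$ directly for general $u_0$ via two-sided heat kernel bounds on $P_tu_0(x)$, whereas the paper reduces to $u_0\equiv1$ using the independence of the initial condition from Theorem~\ref{thm:lyapunov-main}.
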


\begin{proof}
The bounds~\eqref{eq:lyapunov-two-sided} follow from Proposition~\ref{prop:Lp-estimates}, Theorem~\ref{thm:moment-LB} and Theorem~\ref{thm:lyapunov-main}. By the independence of the initial condition in Theorem~\ref{thm:lyapunov-main}, we may take $u_0\equiv1$ when computing $\lambda_1(\beta)$. Since $\EE[U_t\1(x)]=P_t\1(x)=1$, we obtain~\eqref{eq:first-moment-exponent}.

We use the convexity argument for moment exponents recalled in~\cite[Section~1]{FoondunKhoshnevisan2009}. For real $1<p<r$, H\"older's inequality gives
\begin{equation*}
\EE[u(t,x)^p]
\le\EE[u(t,x)]^{(r-p)/(r-1)}
\EE[u(t,x)^r]^{(p-1)/(r-1)}\,.
\end{equation*}
Taking logarithmic rates and using $\lambda_1(\beta)=0$ yields
$\lambda_p(\beta)\le (p-1)\lambda_r(\beta)/(r-1)$.
Since $\lambda_p(\beta)>0$ by Theorem~\ref{thm:moment-LB},
\begin{equation*}
\frac{\lambda_r(\beta)}r
\ge\frac{r-1}{r(p-1)}\lambda_p(\beta)
>\frac{\lambda_p(\beta)}p\,.
\end{equation*}
The inequality $\lambda_1(\beta)<\lambda_2(\beta)/2$ and the divergence in~\eqref{eq:strict-intermittency} follow from~\eqref{eq:lyapunov-two-sided} and $\lambda_1(\beta)=0$.
\end{proof}

\section{Example: The infinite Sierpi\'nski gasket}\label{sec:infinite-gasket}

We conclude with a concrete example to illustrate the results. 

\subsection{The PAM on the gasket}\label{sec:gasket-geometry}

Let $q_0=(0,0)$, $q_1=(1,0)$ and $q_2=(1/2,\sqrt{3}/2)$. The compact Sierpi\'nski gasket is the unique nonempty compact set $K\subset\R^2$ such that
\begin{equation*}
K=\bigcup_{i=0}^2F_i(K),\qquad F_i(x)=\frac{x+q_i}{2}\,.
\end{equation*}
We consider the (one-sided) infinite gasket obtained by dilation about $q_0$:
\begin{equation*}
X=\bigcup_{m\ge0}2^mK\,.
\end{equation*}
We equip $X$ with its shortest-path metric $d$, which is comparable to the Euclidean distance, and with the Hausdorff measure $\mu$ normalized by $\mu(K)=1$. In particular, $2X=X$, and for every $n\in\mathbb Z$ and every Borel set $A\subset X$,
\begin{equation}\label{eq:sg-measure-scaling}
d(2^nx,2^ny)=2^nd(x,y),\qquad \mu(2^nA)=3^n\mu(A)\,.
\end{equation}
The canonical conservative Dirichlet form is obtained by extending the standard self-similar energy on $K$ to these dilated copies; its construction and the associated Brownian motion are described in~\cite{BarlowPerkins1988,Barlow1998,Kigami2001}. Its heat kernel satisfies Assumption~\ref{hyp:dirichlet-form} with
\begin{equation*}
d_h=\frac{\log3}{\log2},\qquad d_w=\frac{\log5}{\log2}\,,
\end{equation*}
and the exact scaling identity
\begin{equation}\label{eq:sg-heat-scaling}
p_{5^nt}(2^nx,2^ny)=3^{-n}p_t(x,y),\qquad n\in\mathbb Z,\ t>0,\ x,y\in X\,.
\end{equation}
The weak Bakry--\'Emery estimate~\eqref{eq:wBE} holds with $\kappa=d_w-d_h=\log(5/3)/\log2$; see~\cite[Theorem~3.7 and Section~5.1]{Besov3}.

Since $d_h<d_w$, the admissible range~\eqref{eq:alpha-range} becomes
\begin{equation}\label{eq:sg-alpha-range}
0\le\alpha<\frac{\log3}{2\log5}\,.
\end{equation}
Thus space--time white noise is included. For $\alpha>0$, the spatial covariance satisfies $G_{2\alpha}(x,y)\simeq d(x,y)^{2\alpha d_w-d_h}$. For every $\alpha$ in~\eqref{eq:sg-alpha-range}, every $\beta>0$ and every bounded initial condition, Theorem~\ref{thm:exist-unique-wholespace} gives a unique mild solution, with the finite-time moment bounds of Proposition~\ref{prop:Lp-estimates}. It is almost surely positive at every fixed positive time and spatial point when the initial condition is nonnegative and nonzero in $L^\infty(X,\mu)$.

By Theorem~\ref{thm:Holder}, the solution has a modification which is locally H\"older continuous on $(0,\infty)\times X$ with any respective time and space exponents satisfying
\begin{align*}
0<\gamma_t&<\frac{(1+2\alpha)\log5-\log3}{2\log5}\,,\\
0<\gamma_x&<\min\biggl\{\frac{\log(5/3)}{\log2},\frac{(1+2\alpha)\log5-\log3}{2\log2}\biggr\}\,.
\end{align*}
For nonzero nonnegative bounded initial conditions, Theorem~\ref{thm:lyapunov-main} and Corollary~\ref{cor:lyapunov-bounds} give Lyapunov exponents independent of the spatial point and the initial condition, with
\begin{equation}\label{eq:sg-lyapunov-bounds}
cp\bigl(\beta^2(p-1)\bigr)^{\frac{\log5}{(1+2\alpha)\log5-\log3}}
\le\lambda_p(\beta)\le
Cp\bigl(\beta^2(p-1)\bigr)^{\frac{\log5}{(1+2\alpha)\log5-\log3}},\qquad p\ge2\,.
\end{equation}

\subsection{Scaling of the solution and of the Lyapunov exponents}\label{sec:gasket-scaling}

For fixed $\alpha$ in~\eqref{eq:sg-alpha-range}, write $u^{\beta,u_0}$ for the solution with noise intensity $\beta$ and initial condition $u_0$. The dilation $x\mapsto2x$ multiplies diffusion time by $5$ and changes the effective noise intensity. The following identity makes this relation precise.

\begin{prop}\label{prop:sg-solution-scaling}
For every $n\in\mathbb Z$, $\beta>0$ and $u_0\in L^\infty(X,\mu)$,
\begin{equation}\label{eq:sg-solution-scaling}
u^{\beta,u_0}(5^nt,2^nx)\overset{\mathrm{law}}=
u^{\,\beta(5^{1+2\alpha}/3)^{n/2},\,u_0(2^n\cdot)}(t,x)
\end{equation}
as random fields indexed by $(t,x)\in(0,\infty)\times X$. In particular, for every real $p\ge1$,
\begin{equation}\label{eq:sg-lyapunov-scaling}
\lambda_p\biggl(\beta\Bigl(\frac{5^{1+2\alpha}}3\Bigr)^{n/2}\biggr)=5^n\lambda_p(\beta)\,.
\end{equation}
\end{prop}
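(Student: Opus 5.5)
The plan is to verify that the rescaled field solves the PAM driven by a rescaled noise that is again a noise of the same type, with intensity multiplied by $(5^{1+2\alpha}/3)^{n/2}$. By uniqueness (Theorem~\ref{thm:exist-unique-wholespace}), the chaos expansion determines the law. It is therefore cleanest to argue directly on the chaos kernels~\eqref{eq:fk}.

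\textbf{Step 1: scaling of the kernels.} Define $\tilde W(\varphi)=c_n W_\alpha(S_n\varphi)$, where $(S_n\varphi)(s,y)=\varphi(5^{-n}s,2^{-n}y)$ and $c_n>0$ is a constant to be fixed. I will check that $\tilde W$ is again an isonormal process on $\cH_\alpha$. From~\eqref{eq:sg-heat-scaling} and the definition~\eqref{eq:RieszKernel}, the substitution $t=5^n r$ gives
\[
G_{2\alpha}(2^nx,2^ny)=5^{2\alpha n}3^{-n}G_{2\alpha}(x,y).
\]
Combining this with~\eqref{eq:sg-measure-scaling}, the time Jacobian $5^n$ and the two spatial Jacobians $3^{2n}$, we get
\[
\|S_n\varphi\|_{\cH_\alpha}^2=5^{n(1+2\alpha)}3^{n}\|\varphi\|_{\cH_\alpha}^2 .
\]
For $\alpha=0$ the same formula holds, since there is only one spatial Jacobian. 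Taking $c_n=(5^{n(1+2\alpha)}3^n)^{-1/2}$ makes $\tilde W$ isonormal, and $\tilde W$ has the same law as $W_\alpha$. The identity extends to $\cH_\alpha$ by density from $\Lambda_\alpha$-valued functions.

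\textbf{Step 2: matching the chaos.} In $f_k(s,y;5^nt,2^nx)$, substitute $s_i=5^n s_i'$ and $y_i=2^n y_i'$. Each of the $k$ heat kernels contributes a factor $3^{-n}$ by~\eqref{eq:sg-heat-scaling}. The last factor becomes $P_{5^ns_1'}u_0(2^ny_1')=P_{s_1'}[u_0(2^n\cdot)](y_1')$, again by~\eqref{eq:sg-heat-scaling} and~\eqref{eq:sg-measure-scaling}. Hence
\[
f_k(\cdot;5^nt,2^nx)=3^{-nk}\,S_n^{\otimes k}f_k^{u_0(2^n\cdot)}(\cdot;t,x).
\]
The multiple integral identity $I_k^{W}(S_n^{\otimes k}h)=c_n^{-k}I_k^{\tilde W}(h)$ follows from the first-order case on elementary tensors and then by isometry. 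It gives a total factor $(3^{-n}c_n^{-1})^k\beta^k$. Since $3^{-n}c_n^{-1}=(5^{1+2\alpha}/3)^{n/2}$, the $k$-th chaos of $u^{\beta,u_0}(5^nt,2^nx)$ equals the $k$-th chaos, with respect to $\tilde W$, of the solution with coupling $\beta(5^{1+2\alpha}/3)^{n/2}$ and initial condition $u_0(2^n\cdot)$. The zeroth chaos matches in the same way.

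\textbf{Step 3: from kernels to laws.} Both sides of~\eqref{eq:sg-solution-scaling} are then the same measurable functional of $W_\alpha$ and $\tilde W$ respectively, jointly over all finitely many $(t,x)$, because the series converge in $L^2$. Equality of finite-dimensional distributions follows, which proves~\eqref{eq:sg-solution-scaling}.

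\textbf{Step 4: the Lyapunov exponents.} Apply~\eqref{eq:sg-solution-scaling} at a fixed point $x$ with $u_0\equiv1$, which is invariant under dilation. This gives
\[
\EE\bigl[u^{\beta}(5^nt,2^nx)^p\bigr]=\EE\bigl[u^{\beta'}(t,x)^p\bigr],\qquad \beta'=\beta\bigl(5^{1+2\alpha}/3\bigr)^{n/2}.
\]
Theorem~\ref{thm:lyapunov-main} makes the rate independent of the point, so the left side grows like $e^{5^nt\lambda_p(\beta)}$. Dividing by $t$ and letting $t\to\infty$ yields~\eqref{eq:sg-lyapunov-scaling}.

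The main obstacle is Step~1 for the Riesz kernel: the scaling of $G_{2\alpha}$ and the correct exponent of the normalizing constant $c_n$, together with handling the extension to the completion $\cW^{-\alpha}$. Everything else is bookkeeping.
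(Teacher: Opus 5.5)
Your proposal is correct and follows essentially the same route as the paper. It uses the same rescaled noise $\widetilde W_\alpha(\varphi)=5^{-n(1+2\alpha)/2}3^{-n/2}W_\alpha(\varphi(5^{-n}\cdot,2^{-n}\cdot))$, justified by the scaling $G_{2\alpha}(2^nx,2^ny)=5^{2\alpha n}3^{-n}G_{2\alpha}(x,y)$, and the same choice $u_0\equiv1$ for the exponents. The only difference is that you match the chaos kernels directly instead of rescaling the mild equation and invoking uniqueness. This is the ``equivalently the chaos expansion'' alternative that the paper mentions, and it spares you the stochastic change of variables for predictable integrands.
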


\begin{proof}
We rescale the covariance and then the mild equation, as in the noise-scaling argument of~\cite[Lemma~3.3 and Proposition~3.4]{BOTW23}. For $\alpha>0$, changing variables $s=5^nr$ in~\eqref{eq:Riesz-intro} and using~\eqref{eq:sg-heat-scaling} gives
\begin{equation}\label{eq:sg-riesz-scaling}
G_{2\alpha}(2^nx,2^ny)=5^{2\alpha n}3^{-n}G_{2\alpha}(x,y),\qquad x\ne y\,.
\end{equation}
Define a rescaled noise, initially on bounded compactly supported test functions, by
\begin{equation*}
\widetilde W_\alpha(\varphi)
=5^{-n(1+2\alpha)/2}3^{-n/2}
W_\alpha\bigl((s,y)\mapsto\varphi(5^{-n}s,2^{-n}y)\bigr)\,.
\end{equation*}
For $\alpha>0$, the change of variables in the covariance~\eqref{eq:H-inner-product} contributes a factor $5^n3^{2n}5^{2\alpha n}3^{-n}=5^{n(1+2\alpha)}3^n$. The prefactor therefore gives
\begin{equation*}
\EE[\widetilde W_\alpha(\varphi)\widetilde W_\alpha(\psi)]
=\langle\varphi,\psi\rangle_{\cH_\alpha}\,.
\end{equation*}
At $\alpha=0$, the same identity follows directly from~\eqref{eq:H0-inner-product}, since $ds\,d\mu(y)$ scales by $5^n3^n$. By completion, $\widetilde W_\alpha$ is an isonormal Gaussian process on $\cH_\alpha$ with the same law as $W_\alpha$.

Set $v(t,x)=u^{\beta,u_0}(5^nt,2^nx)$. Applying~\eqref{eq:sg-heat-scaling} to the deterministic and stochastic terms of~\eqref{eq:mild} gives
\begin{equation*}
v(t,x)=P_t[u_0(2^n\cdot)](x)
+\beta\Bigl(\frac{5^{1+2\alpha}}3\Bigr)^{n/2}
\int_0^t\!\!\int_Xp_{t-s}(x,y)v(s,y)\,\widetilde W_\alpha(ds,dy)\,.
\end{equation*}
The stochastic change of variables holds first for elementary predictable integrands and then by the isometry. Uniqueness in Theorem~\ref{thm:exist-unique-wholespace}, or equivalently the chaos expansion, proves~\eqref{eq:sg-solution-scaling}. Taking $u_0\equiv1$, passing to $p$-th moments and dividing their logarithms by $t$ proves~\eqref{eq:sg-lyapunov-scaling} by Theorem~\ref{thm:lyapunov-main}.
\end{proof}

\begin{remark}\label{rem:lyapunov-scaling}
For every fixed $p\ge1$, the function
\begin{equation*}
s\longmapsto
\exp\biggl(-\frac{2d_w}{(1+2\alpha)d_w-d_h}\,s\biggr)\lambda_p(e^s)
\end{equation*}
is periodic with period $\frac12\bigl((1+2\alpha)\log5-\log3\bigr)$. This follows from~\eqref{eq:sg-lyapunov-scaling} with $n=1$, since $d_w=\log5/\log2$ and $d_h=\log3/\log2$. Thus the dependence on $\beta$ may include a factor periodic in $\log\beta$ multiplying the power $\beta^{2d_w/((1+2\alpha)d_w-d_h)}$.
\end{remark}

\subsection{Simulation}\label{sec:gasket-simulation}

Figure~\ref{fig:sg-simulation} shows one realization for $\alpha=0$, $\beta=1.5$ and $u_0\equiv1$, at times $0$, $0.5$, $1$ and $2$. We use a mesh of size $1/16$ on $32K$, with $29\,524$ active vertices, and display the solution on $D=16K$, with $9\,843$ vertices. The two remote corners of $32K$ carry zero Dirichlet conditions; the origin retains the natural reflecting condition.

Let $L$ be the combinatorial graph Laplacian on the active vertices, retaining the diagonal contributions of edges incident to the Dirichlet corners, and let $M=\operatorname{diag}(m_i)$, where $m_i$ is the number of mesh cells incident to vertex $i$, divided by $3^5$. With the normalization $\mu(K)=1$, the graph approximation solves the SDEs
\begin{equation*}
dU_i(t)=-\bigl(M^{-1}(5/3)^4LU(t)\bigr)_i\,dt
+\frac{\beta}{\sqrt{m_i}}U_i(t)\,dB_i(t),
\qquad U_i(0)=1\,,
\end{equation*}
where the $B_i$ are independent standard Brownian motions and $m_i^{-1/2}$ gives the discrete spatial white-noise normalization.  The displayed averages approximate $\bar u_D(t)=\mu(D)^{-1}\int_Du(t,x)\,d\mu(x)$ by vertex quadrature counting only mesh cells contained in $D$. 
\begin{figure}[!htbp]
\centering
\includegraphics[width=0.95\textwidth]{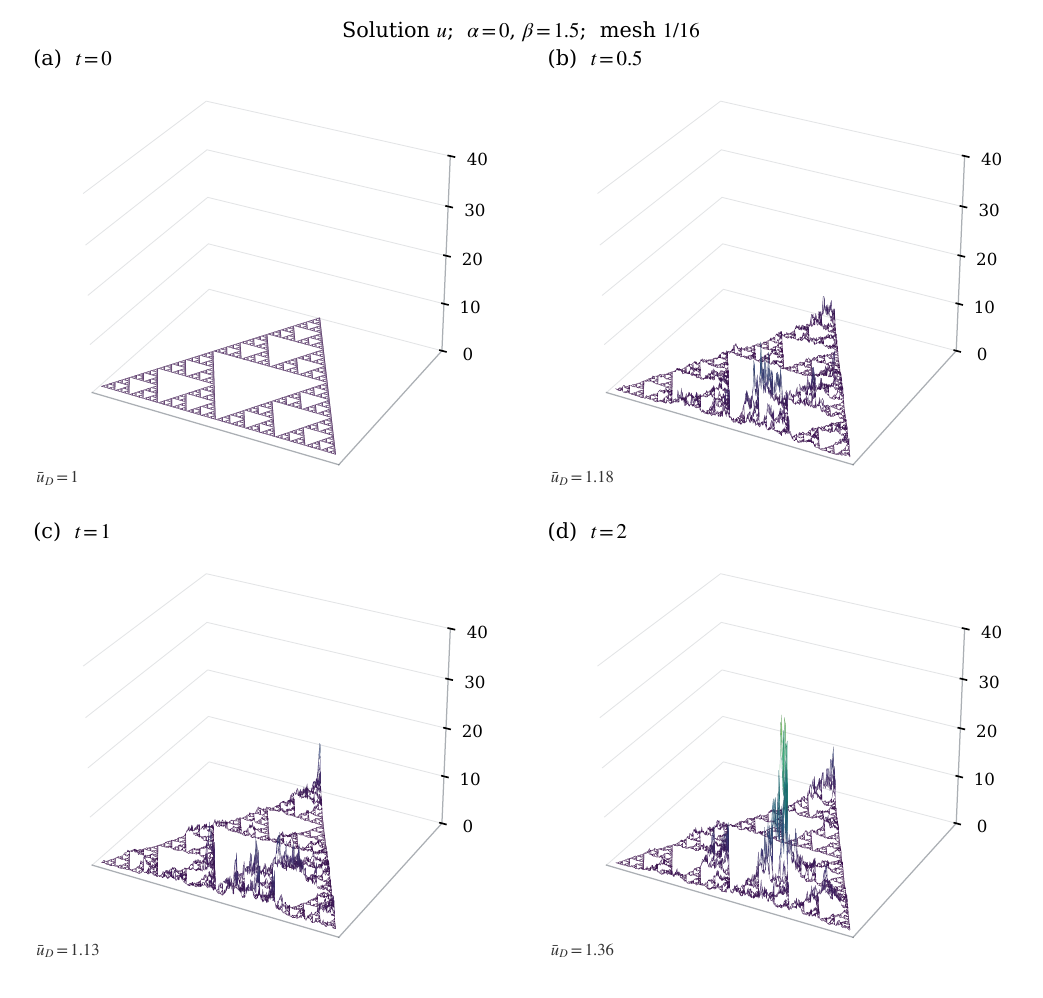}
\caption{Numerical approximation of the solution on the infinite Sierpi\'nski gasket, with space--time white noise ($\alpha=0$), $\beta=1.5$ and initial condition $u_0\equiv1$, observed on the cell $D=16K$ of diameter $16$. The computation uses the larger finite truncation $32K$. All four panels show the same realization; $\bar u_D$ denotes its spatial average over $D$.}
\label{fig:sg-simulation}
\end{figure}
\FloatBarrier

\subsection*{Acknowledgments}
F.B.\ is partially supported by grant 10.46540/4283-00175B from the Independent Research Fund Denmark, by the Villum Investigator grant \emph{Stochastic Analysis in Aarhus}, and by the European Research Council (ERC) under the European Union's Horizon Europe research and innovation programme (RanGe project, Grant Agreement No.~101199772). H.C.\ is partially supported by the Villum Investigator grant \emph{Stochastic Analysis in Aarhus}. L.C.\ and S.K.\ are partially supported by grant 10.46540/4283-00175B from the Independent Research Fund Denmark.
\subsection*{Declaration of generative AI use}

During the final preparation of this manuscript, the authors used Claude
(Fable 5.1) and ChatGPT (Astra) to assist with editorial restructuring,
proofreading, proof presentation, numerical simulations and \LaTeX{} consistency. All the mathematical ideas  are due to the authors with the exception of the following: In Theorem~\ref{thm:lyapunov-main}, the authors initially proved the existence of the moment Lyapunov exponents for integer moments using Feynman-Kac formulas and the theory of Schr\"odinger operators.
ChatGPT (Astra) suggested an approach extending this result to every
real $p\ge1$. The authors then wrote the details and simplified the proof, which turned out to rely on ideas from \cite{BergerChongLacoin2023}. The authors reviewed and edited all AI-assisted material and take full
responsibility for the mathematical statements, proofs, and presentation
of the article.

\bibliographystyle{abbrv}
\bibliography{bibfilerev}
\end{document}